\documentclass[a4paper]{article}

\usepackage{mysty}

\usepackage{enumerate}

\hyphenation{Chebo-tarev}

\title{Tame Class Field Theory for Global Function Fields}

\author{Florian Hess}
\affil{\small Institut f\"ur Mathematik, Carl von Ossietzky Universit\"{a}t Oldenburg, 26111 Oldenburg, Germany, {\tt florian.hess@uni-oldenburg.de,} phone +49\,441\,798\,2906}
\author{Maike Massierer\thanks{The author was supported by the Swiss National Science Foundation under grant no.\ 123393 and 151884.}}
\affil{\small Mathematisches Institut, Universit\"{a}t Basel, Rheinsprung 21, 4051 Basel, Switzerland, {\tt maike.massierer@inria.fr,} phone +33\,3\,54\,95\,86\,13}

\date{}

\begin{document}
\maketitle

\begin{abstract}
We give a function field specific, algebraic proof of the main results
of class field theory for abelian extensions of degree coprime to the
characteristic. By adapting some methods known for number fields and
combining them in a new way, we obtain a different and much simplified
proof, which builds directly on a standard basic knowledge of the
theory of function fields. Our methods are explicit and constructive
and thus relevant for algorithmic applications.  We use generalized
forms of the Tate--Lichtenbaum and Ate pairings, which are well-known in
cryptography, as an important tool.

\medskip \noindent
{\bf Mathematics Subject Classification (2010)} primary: 11R37, secondary: 11R58, 11T71
{\bf Keywords} Class field theory, global function fields, Tate--Lichtenbaum pairing
\end{abstract}

\section{Introduction}

The aim of class field theory for global and local fields is to
classify all abelian extensions of a given base field $F$ in terms
of data associated to $F$ alone. If $F$ is a global function field,
class field theory establishes a one-to-one correspondence between the
finite abelian extensions of $F$ and the subgroups of finite index of
ray divisor class groups $\Clm(F)$. Every such subgroup~$H$ of
$\Clm(F)$ is associated with its class field $E$, an abelian extension
of $F$ uniquely determined by~$H$, which is unramified outside of the
support of an effective divisor $\m$.  The class field $E$ is
characterized by the property~$\Gal(E|F) \cong \Clm(F)/H$ under the
Artin map.

The study of class field theory, which originated during the second
half of the 19th century with the focus on number fields, has a long
tradition in number theory, and several different proofs of the main
results exist. While some apply only to number fields or local fields
of characteristic zero as base fields and use specific methods
exploiting their properties, others provide general frameworks
that cover several types of base fields and apply to even more
general geometric forms of class field theory. These more general
proofs use involved and abstract machinery such as in particular group
cohomology. Algebraic proofs for class field theory of global function
fields in the literature are presented in such general contexts. Since
number fields were historically considered first, these proofs for
function fields are usually a minimum adaption of those for number
fields.

The goal of this work is to attempt a best possible adaption and give
a tailored algebraic proof of class field theory for global function
fields in a classical style.  Our approach is short, direct and
self-contained and requires a much smaller apparatus of definitions
and concepts than the known proofs. In fact, it builds directly on the
content of introductory books to the theory of function fields, such
as \cite{main}{Ros, Sti}, without requiring any further theory. Moreover,
our approach is rather explicit and therefore interesting from an
algorithmic point of view, e.g.\ for the computation of class fields
or in cryptography.

\medskip \noindent {\bf Literature.}  Some standard works of class
field theory are \cite{main}{ArTa, CaFr, LangNT, Jan, Weil, Serre, SerreCF, 
Neu, NeuII} and \cite{main}{Milne}. 
Gras \cite{main}{Gras} is a more
unconventional book that omits the proofs of the central results, but
concentrates more on comprehension and applications.   
An elaborate account of the historical development of class field theory for global function fields, including many references, is given by Roquette \cite{main}{Roq}. Two particularly prominent original publications, which essentially concluded the work on class field theory for global function fields, are due to Schmid \cite{main}{Sch} and Witt \cite{main}{WittExistence}. 

Most of the standard works cited above
treat only number fields. Artin and Tate \cite{main}{ArTa} and Weil
\cite{main}{Weil} axiomatize their theory so that it also applies to global
function fields. So does Tate \cite{main}{CaFr}, but his proofs are
restricted to abelian extensions of degree coprime to the
characteristic. These proofs in \cite{main}{ArTa,Weil,CaFr} are based on
local class field theory and Galois cohomology or Brauer groups. Lang
\cite{main}{LangNT} presents a more classical proof using global
considerations. He covers only number fields but states that the proof
carries over to~(abelian extensions of degree coprime to the
characteristic of) function fields with only minor
modifications. The original development and proof of class field theory for global function fields, and in particular the approach of \cite{main}{WittExistence}, are similar to the exposition of \cite{main}{CaFr, LangNT} in many aspects. Serre \cite{main}{SerreCF} provides a geometric approach,
based on algebraic groups and more precisely on generalized Jacobians,
which applies to function fields only. Villa Salvador \cite{main}{Vil}
presents a summary of global and local class field theory for function
fields with main reference to \cite{main}{CaFr} and provides a detailed
exposition for the analytic, ``complex multiplication'' approach of
Carlitz, Drinfeld and Hayes. Greenberg \cite{main}{greenberg-74} gives an elementary proof of the Kronecker--Weber Theorem.

\medskip \noindent {\bf Our Contribution.}  We provide a function
field specific, algebraic proof of class field theory for abelian
extensions of degree coprime to the characteristic that is short,
direct and requires a minimum of prerequisities. It does not make
reference to local class field theory, Galois cohomology, Brauer
groups, involved index computations, $L$-series or Drinfeld modules.
Methodically our approach is purely global and thus somewhat similar
to the classical global approaches for number fields, as presented
e.g.\ in \cite{main}{LangNT}. It is also related to the general duality
framework presented in \cite{main}{Milne06}, which is based on a cup pairing
on cohomology groups, as one of the main tools in our proof is also a
pairing.  Our improvements are essentially due to three ingredients:
Firstly, we exploit specific properties of function fields, such as
the evaluation of functions and in particular Weil reciprocity, that
are not available for number fields. Secondly, we reduce to maximal
abelian extensions of fixed exponents and unramified outside finite
sets instead of cyclic extensions for simplification. Thirdly, we
rearrange the flow of arguments that usually builds class field
theory.

The proofs in the literature that deal with function fields treat
number fields at the same time. Since number fields have less specific
properties and are more difficult to handle, the resulting proofs are
more complicated for function fields than would be necessary. Also,
these aforementioned proofs apply to abelian extensions of degree coprime to the
characteristic only. The case of abelian extensions where the degree
is a power of the characteristic does not occur for number fields and
needs to be treated separately for function fields. In this paper we
focus on the simplifications in the treatment of the first case that
can be specifically achieved for function fields and leave the second
case aside, since it requires rather different considerations.

An outline of our proof is as follows. First we give a short and
self-contained proof of the surjectivity of the Artin map in Section
\ref{sec::surj} as stated in Theorem \ref{thm::artinsurjective}. This
is based on the surjectivity of the Artin map in constant field
extensions, which is rather trivially established, and a Galois
twisting argument that is also used in the proof of the Chebotarev
density theorem in \cite[Ch.\ 6]{main}{FrJa} and in the proof of the
reciprocity law of Artin in \cite[Ch.\ X.2]{main}{LangNT}. This immediately gives us
 the first inequality for general abelian extensions,
namely that the norm index is greater than or equal to the extension
degree. In the standard proofs of \cite{main}{CaFr, LangNT} the first
inequality is obtained from local norm index computations and
cohomological machinery, for cyclic extensions only. The surjectivity
of the Artin map is then derived from the first inequality for cyclic
extensions using the openness of local norm groups and implies the
first inequality for general abelian extensions.

In Section \ref{sec::reciprocity}, we prove the reciprocity law of
Artin. Lemma \ref{ext} gives a concrete algebraic description of the
Artin map by means of function evaluation for cyclic extensions of
degree dividing $n$, for some $n$ coprime to the characteristic of
$F$. This is a generalization of a result for the case when the $n$-th
roots of unity are contained in the base field, shown by Hasse in
\cite{main}{Has}. Using Weil reciprocity and a straightforward calculation,
Lemma \ref{ext} implies the reciprocity law of Artin, which is Theorem
\ref{thm::artinkernel}. The standard proofs of \cite{main}{CaFr, LangNT}
proceed differently. Both prove the second inequality, namely that the
norm index is less than or equal to the extension degree for general
abelian extensions, prior to the reciprocity law. In \cite{main}{LangNT}
this is done analytically using $L$-series. The reciprocity law is
then reduced via the aforementioned Galois twisting argument to the
reciprocity law in cyclic cyclotomic extensions, where it is proved by
direct computation, and to the already established norm index equality in such
extensions. In \cite{main}{CaFr}, the second inequality is proved together
with other cohomological statements and statements about Brauer
groups, using the algebraic approach of Chevalley. The latter are then
used to reduce the reciprocity law to the reciprocity law in cyclic
cyclotomic extensions, which are essentially dealt with as in
\cite{main}{LangNT}.

At this stage the second inequality and the existence theorem are yet
to be proven in our approach. To this end we prove in
Theorem~\ref{thm::kummer2} of Section~\ref{sec::clfwithrootunity} that
the kernel of the Artin map for the maximal abelian extension of
exponent $n$ of a base field containing the $n$-th roots of unity,
which is unramified outside an arbitrary finite set $\SSS$, consists
precisely of $n$-th multiples. Analogous statements are proved in
\cite{main}{CaFr, LangNT} for the existence theorem, with the main difference
being that they crucially use the second inequality that we have not
yet established. We offer a proof that does not require the second
inequality. Instead we rely on a generalization of the
Tate--Lichtenbaum pairing and a proof of its non-degeneracy obtained
from various symmetries in the evaluation of functions at divisors
that are exhibited in Section~\ref{sec::evaluation} with its main
Theorem~\ref{theoremadjoint2}. An interesting feature of the proofs is
that while \cite{main}{CaFr, LangNT} need to enlarge $\SSS$ to complete their
arguments, we actually reduce $\SSS$ to the empty set.  In
Section~\ref{sec::classfields} we generalize the final existence
proofs in \cite{main}{CaFr, LangNT} to the situation of not using the second
inequality by means of a suitable induction and obtain our final
Theorem~\ref{thm::classfieldtheory}.

Summarizing, we see Lemma \ref{ext}, Theorems \ref{theoremadjoint1}
and \ref{theoremadjoint2}, the induction argument in
Theorem~\ref{thm::classfieldtheory} and their composition to a full
proof of class field theory coprime to the characteristic as essential
new contributions of our work.

Our approach has been inspired by constructive methods used in
cryptography. In particular, our main pairing $t_{n,\m}$ is a
generalization of the Tate--Lichtenbaum
pairing \cite{crypto}{FR,lichtenbaum-69,tate-58}. Moreover, the
function $h$ of Lemma \ref{ext} is closely related to the Ate pairings
\cite{crypto}{GHOTV, hess-08,vercauteren-2010}. These pairings play an
important role in cryptography.  Our proof of Lemma~\ref{ext} features
similarities with the proof of the bilinearity and non-degeneracy of
the Ate pairings. In cryptography, pairings are usually considered for
elliptic or hyperelliptic curves and prime exponents $n$.  Since our
setting here is much more general, Lemma~\ref{ext} can be used to
derive new Ate pairings for general curves and composite
exponents~$n$.  We give a brief further discussion of pairings in
geometry and cryptography and their relation to our paper and class
field theory in Appendix~\ref{app::pairings}.

\section{Preliminaries} \label{sec::prelim}

We collect some basic facts that will be used frequently in this
paper. Unless defined here we will use standard notation as
in~\cite{main}{Sti}. By global function field, we mean the function field of an irreducible smooth 
projective curve over a finite field.

\subsubsection*{Ray Class Groups and Artin Map}

We fix a global function field $F$\nomenclature[faaaab]{$F$}{global function field, here always base field for class field theory} with exact constant field $\F_q$\nomenclature[Fq]{$\Fq$}{finite field with $q$ elements, here always constant field of $F$}\nomenclature[q]{$q$}{cardinality of the constant field of $F$}
and an algebraic closure $\bar{F}$.\nomenclature[Fbar]{$\bar{F}$}{algebraic closure of the field $F$} All extension fields of $F$ in
this paper are finite and separable over $F$ and contained in
$\bar{F}$. The multiplicative group of $n$-th roots of unity in
$\bar{F}$ is denoted by $\mu_n$.\nomenclature[mun]{$\ew$}{mutliplicative group of $n$-th roots of unity in $\bar{F}$}\nomenclature[n]{$n$}{number coprime with $q$, here always degree of the class field extension}

\begin{defn}
Let $\m$ be an effective divisor of $F$ and $L$\nomenclature[L]{$L$}{extension field of $F$} an extension field of
$F$. We denote the group of divisors of $L$ with support disjoint from
the support of the conorm 
$\Con{L}{F}(\m)$\nomenclature[ConLF]{$\mathop{\mathrm{Con}}_{L"|F}$}{conorm w.r.t.\ the field extension $L"|F$, see \cite[Def.\~III.1.8]{main}{Sti}}
of $\m$ by $\D_\m(L)$. This
group has a subgroup $\H_\m(L)$, called the {\bf ray} of $L$ modulo $\m$,
consisting of principal divisors $\div{L}{f}$\nomenclature[degdivFf]{$\div{F}{f}$}{principal divisor of the function $f \in \Fx$, see \cite[Def.\ I.4.2]{main}{Sti}} with $f \in L^\times$\nomenclature[Faaax]{$\Fx$}{multiplicative group of the field $F$}
satisfying the congruence~$f \equiv 1 \bmod
\p^{\ord_\p(\Con{L}{F}(\m))}$ in the valuation ring $\O_\p$\nomenclature[Op]{$\Op$}{valuation ring of the place $\p$, see \cite[Def.\ I.1.8]{main}{Sti}} for all
places $\p$\nomenclature[p]{$\p$}{a place, see \cite[Def.\ I.1.8]{main}{Sti}} of $L$. 
The {\bf ray class group} of $L$ modulo $\m$ is then
$$\Cl_\m(L) = \D_\m(L) / \H_\m(L).$$ 
\end{defn}

Further details about these
groups can be found in \cite{main}{Sti} for $\m = 0$ and in~\cite{main}{HPP} for
general $\m$. We abbreviate $\D(L) = \D_0(L)$ and $\H(L) = \H_0(L)$,
which are the usual group of divisors and its subgroup of principal
divisors.

\begin{rmk}
 Notice that $\D_\m(L)$ depends only on the support of $\m$ while
 $\H_\m(L)$ and $\Cl_\m(L)$ depend on $\m$ itself. Later we will also
 use, and prove, that $\Cl_\m(L) / n \Cl_\m(L)$ depends again only on
 the support of $\m$ when $n$ is coprime to $q$.
\end{rmk}

\begin{rmk}
 The most frequently used case of the above definition in this paper is for $L=F$. Then $\D(F)$\nomenclature[DF]{$\D(F)$}{group of divisors of $F$, see \cite[Def.\ I.4.1]{main}{Sti}} and $\H(F)$\nomenclature[PF]{$\H(F)$}{group of principal divisors of $F$, see \cite[Def.\ I.4.3]{main}{Sti}} are the groups of divisors and principal divisors of $F$, respectively, and we have
 \begin{eqnarray}
  \D_\m(F) & = & \{\DD \in \D(F) \mid \supp(\DD) \cap \supp(\m) = \emptyset\}\label{dmf}\\
  \H_\m(F) & = & \{\div{F}{f} \in \H(F) \mid f \in \Fx, f \equiv 1 \bmod{\p^{\ord_\p \m}} \text{ in } \Op \text{ for all places } \p \text{ of } F\}\label{hmf}\\
  \Cl_\m(F) & = & \D_\m(F)/\H_\m(F).\label{clmf}
 \end{eqnarray}
\end{rmk}
\nomenclature[supp]{$\supp(\DD)$}{support of the divisor $\DD$, see \cite[Def.\ I.4.1]{main}{Sti}}
\nomenclature[suppfunc]{$\supp(f)$}{support of $\ddiv_F(f)$ for $f \in \Fx$ \nomnorefpage}
\nomenclature[DmF]{$\D_\m(F)$}{divisors of $F$ with support disjoint from the support of $\m$, see eq.\ (\ref{dmf})}
\nomenclature[Pmf]{$\H_\m(F)$}{ray of $F$ modulo $\m$, see eq.\ (\ref{hmf})}
\nomenclature[ClmF]{$\Cl_\m(F)$}{class group of $F$ modulo $\m$, see eq.\ (\ref{clmf})}
\nomenclature[d]{$\DD$}{divisor, see \cite[Def.\ I.4.1]{main}{Sti}}

\begin{defn}
Let $\FD$ be an extension field of $F$ and $\ED$\nomenclature[EE]{$\ED$}{abelian extension field of $\FD$} an extension field of
$\FD$. We say that $\ED|\FD$ is {\bf unramified outside $\m$} if $\ED|\FD$
is unramified at all places of $\FD$ outside the support of
$\Con{\FD}{F}(\m)$. 
\end{defn}

We have conorm and norm
maps $\Con{E'}{F'} : \D_\m(\FD) \rightarrow \D_\m(\ED)$ and
$\Norm{E'}{F'} : \D_\m(\ED) \rightarrow \D_\m(\FD)$\nomenclature[NLF]{$\mathop{\mathrm{N}}_{L"|F}$}{divisor norm w.r.t.\ the field extension $L"|F$, see \cite[Def.\ 5.3.5]{main}{Vil}} with
$\Con{E'}{F'}( \H_\m(\FD) ) \subseteq \H_\m(\ED)$ and $\Norm{E'}{F'}(
\H_\m(\ED) ) \subseteq \H_\m(\FD)$.

\begin{defn}\label{modulus}
Assume now $\ED|\FD$ abelian and let $\p$ be a place of $\FD$
unramified in $\ED$.  Define $\text{N}(\p) = \# \O_\p / \p =
q^{\deg(\p)}$.\nomenclature[Np]{$\text{N}(\p)$}{norm of a place $\p$, see Def.\ \ref{modulus}}  There is a uniquely determined automorphism $\sigma_\p
\in \Gal(\ED|\FD)$ such that $$\sigma_\p(x) \equiv x^{\text{N}(\p)}
\bmod \q$$ for all $x \in \O_\q$ and all places $\q$ of $\ED$ lying
above $\p$. Let $\m$ denote an effective divisor of $F$ such that
$\ED|\FD$ is unramified outside $\m$. The {\bf Artin map} is defined
as 
\begin{equation}\label{def:artinmap}
\Frob{\ED}{\FD} : \D_\m(\FD) \rightarrow \Gal(\ED|\FD), \quad \DD
\mapsto \prod_{\p} \sigma_\p^{\ord_\p(\DD)},
\end{equation}
where the product runs
over all places $\p$ of $\FD$.\nomenclature[AEF]{$\Frob{E}{F}$}{Artin map of $E"|F$ defined on $\D_\m(F)$, see eq.\ (\ref{def:artinmap})}  If $\H_\m(\FD) \subseteq \ker
\Frob{\ED}{\FD}$ then $\m$ is called a {\bf modulus}\nomenclature[m]{$\m$}{an effective divisor, often a modulus of a field extension, see Def.\ \ref{modulus}} of $\ED|\FD$.
\end{defn}

These definitions apply in particular to the case where $\FD = F$.
The following general properties of the Artin map are used
frequently in this paper.

\begin{thm} \label{thm::artinfunktor}
  We use the same notation as in Definition \ref{modulus}.
  \begin{itemize}
    \item[$(i)$] Let $E$ be an intermediate field of $E'|F$. Then
    $$ \Frob{E}{F}(\Norm{\FD}{F}( \DD )) = \Frob{\ED}{\FD}(
   \DD ) |_E$$ for all $\DD \in \D_\m(\FD)$. 
     \item[$(ii)$] Let $\sigma \in \Hom(\ED, \bar{F})$. Then
    $$\Frob{\sigma(\ED)}{\sigma(\FD)}(\sigma(\DD)) =
       \sigma \circ \Frob{\ED}{\FD}( \DD ) \circ \sigma^{-1}$$ for all
       $\DD \in \D_\m(\FD)$.
 \end{itemize}
\end{thm}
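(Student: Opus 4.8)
The plan is to reduce both identities---each an equality of group homomorphisms on $\D_\m(\FD)$, which is free abelian on the places of $\FD$ outside $\supp(\Con{\FD}{F}(\m))$---to the case of a single place $\DD$, and then to read them off from the defining congruence $\sigma_\p(x)\equiv x^{\text{N}(\p)}$ of the Artin symbol together with its uniqueness in an abelian extension. I would use freely the two standard facts that, in an abelian extension, the Frobenius at an unramified place lies in the decomposition group of every place above it (hence fixes that place), and that for an unramified place this decomposition group maps isomorphically onto the Galois group of the residue field extension.

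For $(i)$ I would first note that $E|F$ is abelian and unramified outside $\m$ (which is presupposed for $\Frob{E}{F}$ to be defined on $\D_\m(F)$), and that every element of $\Gal(\ED|\FD)$ fixes $F$ and---$E|F$ being normal---maps $E$ onto $E$, so $\Frob{\ED}{\FD}(\DD)|_E$ does lie in $\Gal(E|F)$. Both sides being homomorphic in $\DD$, I may take $\DD=\p$ a place of $\FD$ outside $\supp(\Con{\FD}{F}(\m))$. Let $\p_0$ be the place of $F$ below $\p$ and $f=f(\p|\p_0)$ the relative residue degree; then $\p_0\notin\supp(\m)$ and $\Norm{\FD}{F}(\p)=f\,\p_0$, so the left-hand side equals $\sigma_{\p_0}^{f}\in\Gal(E|F)$. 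Choosing a place $\q$ of $\ED$ above $\p$ and letting $\q_E$ be the restriction of $\q$ to $E$ (a place above $\p_0$), I would check that $\sigma_{\p_0}^{f}$ and $\Frob{\ED}{\FD}(\p)|_E=\sigma_\p|_E$ induce the same automorphism $x\mapsto x^{\text{N}(\p)}$ on the residue field at $\q_E$: for the former because $\text{N}(\p)=\text{N}(\p_0)^{f}$ (the residue field of $\p$ having degree $f$ over that of $\p_0$), and for the latter by restricting the congruence $\sigma_\p(x)\equiv x^{\text{N}(\p)}\bmod\q$ to $x\in\O_{\q_E}\subseteq\O_\q$, using that $\sigma_\p$ maps $E$ onto $E$ and, lying in the decomposition group of $\q$, fixes $\q_E$. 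Since $E|F$ is unramified at $\p_0$, this agreement forces $\sigma_\p|_E=\sigma_{\p_0}^{f}$, which is the claim.

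For $(ii)$ I would begin by observing that $\sigma$, being a field embedding fixing $F$ and hence $\F_q$, restricts to isomorphisms $\FD\to\sigma(\FD)$ and $\ED\to\sigma(\ED)$ of function fields over $\F_q$; these preserve degrees of places, ramification and residue degrees, hence conorms, so $\sigma(\ED)|\sigma(\FD)$ is again abelian and unramified outside $\m$, $\sigma$ maps $\D_\m(\FD)$ onto $\D_\m(\sigma(\FD))$, and $\tau\mapsto\sigma\tau\sigma^{-1}$ is an isomorphism $\Gal(\ED|\FD)\to\Gal(\sigma(\ED)|\sigma(\FD))$. Both sides of the identity are thus defined and homomorphic in $\DD$, so I reduce to $\DD=\p$ a place of $\FD$ outside $\supp(\Con{\FD}{F}(\m))$ and set $\tau=\Frob{\ED}{\FD}(\p)=\sigma_\p$. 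Every place of $\sigma(\ED)$ above $\sigma(\p)$ has the form $\sigma(\q)$ with $\q$ a place of $\ED$ above $\p$, and for $z\in\O_\q$ one computes, using $\text{N}(\sigma(\p))=\text{N}(\p)$,
$$(\sigma\tau\sigma^{-1})(\sigma(z))=\sigma(\tau(z))\equiv\sigma\bigl(z^{\text{N}(\p)}\bigr)=\sigma(z)^{\text{N}(\sigma(\p))}\bmod\sigma(\q).$$
Hence $\sigma\tau\sigma^{-1}$ satisfies the congruence characterizing the Artin symbol of $\sigma(\ED)|\sigma(\FD)$ at $\sigma(\p)$, and by uniqueness it equals $\Frob{\sigma(\ED)}{\sigma(\FD)}(\sigma(\p))$, as desired.

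I do not expect a genuine obstacle here: the statement is essentially formal, a transcription of the defining property of the Frobenius through the relevant inclusions and isomorphisms of residue fields. The one step that calls for care is the bookkeeping in $(i)$, where one must combine the divisor-norm formula $\Norm{\FD}{F}(\p)=f(\p|\p_0)\,\p_0$ with the multiplicativity $\text{N}(\p)=\text{N}(\p_0)^{f(\p|\p_0)}$ of residue-field cardinalities in order to make the exponents on the two sides match, and then invoke the uniqueness clause of Definition~\ref{modulus}---equivalently, the fact that an element of $\Gal(E|F)$ fixing a place above $\p_0$ is determined by the automorphism it induces on the corresponding residue field---to turn agreement of residue actions into equality of Galois automorphisms.
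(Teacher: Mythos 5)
Your argument is correct and is essentially the standard proof of these two functoriality properties of the Artin symbol: reduce by linearity to a single place, verify the defining congruence at a place above, and invoke uniqueness of the Frobenius in an unramified abelian extension. The paper, however, does not prove this theorem itself but simply cites \cite[Prop.~9.10, Prop.~9.11]{Ros} and \cite{ArTa}; your argument is a correct, self-contained rendition of what those references do. One small remark on part (i): the only point that really deserves the care you give it is checking that both $\sigma_\p|_E$ and $\sigma_{\p_0}^{f(\p|\p_0)}$ lie in the decomposition group of the same place $\q_E$ of $E$ (which you get from $\sigma_\p(\q)=\q$ and $\sigma_{\p_0}$ being the Frobenius), so that agreement of the induced residue automorphisms, together with the unramifiedness of $\p_0$ in $E$, forces equality in $\Gal(E|F)$; you handle this correctly.
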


\begin{proof}
   See \cite[Prop. 9.10, Prop. 9.11]{main}{Ros} or \cite{main}{ArTa}.
\end{proof}

\begin{cor} \label{cor::artinfunktor}
   If $\ED|F$ is abelian with modulus $\m$ then any $\n \geq \m$ is also
   a modulus of $\ED|F$. Every intermediate field $E$ of $\ED|F$ also has 
   modulus $\m$. If $E_1|F$ and $E_2|F$ are abelian with modulus $\m$
   then $E_1 E_2 | F$ is abelian with modulus~$\m$.
\end{cor}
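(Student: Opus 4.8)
The plan is to deduce all three assertions of the corollary directly from the functoriality in Theorem~\ref{thm::artinfunktor}, using only the definitions of modulus and of the ray groups $\H_\m$. Recall that $\m$ being a modulus of $\ED|F$ means precisely that $\H_\m(F) \subseteq \ker\Frob{\ED}{F}$, so in each case the task reduces to checking the relevant ray group lies in the relevant kernel.

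For the first assertion, suppose $\n \geq \m$, i.e.\ $\n = \m + \m'$ with $\m'$ effective. From the definitions~(\ref{hmf}) one sees immediately that $\H_\n(F) \subseteq \H_\m(F)$: a function $f$ with $f \equiv 1 \bmod \p^{\ord_\p \n}$ for all $\p$ also satisfies $f \equiv 1 \bmod \p^{\ord_\p \m}$, since $\ord_\p \m \leq \ord_\p \n$. Moreover $\D_\n(F) \subseteq \D_\m(F)$, and on $\D_\n(F)$ the Artin map $\Frob{\ED}{F}$ is just the restriction of the one on $\D_\m(F)$ (both are given by the same product formula~(\ref{def:artinmap}); one should note that $\ED|F$ is indeed unramified outside $\n$ since it is unramified outside $\m$ and $\supp(\m) \subseteq \supp(\n)$). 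Hence $\H_\n(F) \subseteq \H_\m(F) \cap \D_\n(F) \subseteq \ker\Frob{\ED}{F}|_{\D_\n(F)} = \ker\big(\Frob{\ED}{F} \text{ on } \D_\n(F)\big)$, so $\n$ is a modulus of $\ED|F$.

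For the second assertion, let $E$ be an intermediate field of $\ED|F$. First, $E|F$ is unramified outside $\m$ because ramification in $E$ at a place is bounded by ramification in $\ED$, so the Artin map $\Frob{E}{F}$ is defined on $\D_\m(F)$. By Theorem~\ref{thm::artinfunktor}$(i)$ applied with $\FD = F$ (so $\Norm{F}{F}$ is the identity), we have $\Frob{E}{F}(\DD) = \Frob{\ED}{F}(\DD)|_E$ for all $\DD \in \D_\m(F)$. Therefore $\ker\Frob{\ED}{F} \subseteq \ker\Frob{E}{F}$, and since $\H_\m(F) \subseteq \ker\Frob{\ED}{F}$ by hypothesis, we get $\H_\m(F) \subseteq \ker\Frob{E}{F}$, i.e.\ $\m$ is a modulus of $E|F$.

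For the third assertion, set $\ED = E_1 E_2$, which is abelian over $F$ as the compositum of abelian extensions. It is unramified outside $\m$: a place of $F$ outside $\supp(\m)$ is unramified in both $E_1$ and $E_2$, hence in their compositum. The natural injection $\Gal(\ED|F) \hookrightarrow \Gal(E_1|F) \times \Gal(E_2|F)$, $\sigma \mapsto (\sigma|_{E_1}, \sigma|_{E_2})$, is compatible with the Artin maps by Theorem~\ref{thm::artinfunktor}$(i)$ (again with $\FD = F$), so $\Frob{\ED}{F}(\DD)$ is determined by the pair $\big(\Frob{E_1}{F}(\DD), \Frob{E_2}{F}(\DD)\big)$. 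Consequently $\ker\Frob{E_1}{F} \cap \ker\Frob{E_2}{F} \subseteq \ker\Frob{\ED}{F}$, and since $\H_\m(F)$ lies in both of the former by hypothesis, it lies in the latter, proving $\m$ is a modulus of $\ED|F$. The only points requiring any care are the elementary verifications that the extensions in question are unramified outside $\m$ so that the Artin maps are defined, and the compatibility of the Artin map with restriction and with the compositum embedding — but both of these are immediate consequences of Theorem~\ref{thm::artinfunktor}$(i)$, so there is no real obstacle here; the corollary is essentially bookkeeping on top of that theorem.
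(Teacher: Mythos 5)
Your proof is correct and follows essentially the same route as the paper's, deducing all three assertions from the functoriality in Theorem~\ref{thm::artinfunktor}$(i)$ (with $\FD=F$) and the inclusion $\H_\n(F) \subseteq \H_\m(F)$. The only slight difference is in the third part: where you (implicitly) use the inertia-group argument to see that $E_1E_2|F$ is unramified outside $\m$, the paper invokes the Lemma of Abhyankar~\cite[Prop.\ III.8.9]{Sti} instead --- both routes are fine, and neither changes the substance.
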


\begin{proof}
   The first statement follows from $\H_\n(F) \subseteq \H_\m(F)$.
   The second statement is an easy consequence of
   Theorem~\ref{thm::artinfunktor}, $(i)$ with $\FD = F$.  The third
   statement is an easy consequence of
   Theorem~\ref{thm::artinfunktor}, $(i)$ with $\FD = F$, $\ED = E_1
   E_2$, $E = E_1$ or $E = E_2$, some Galois theory and the Lemma of
   Abhyankar~\cite[Prop. III.8.9]{main}{Sti}.
\end{proof}

\subsubsection*{Pairings}

Let $A$ and $B$ be abelian groups with dual groups $A^\vee = \Hom(A,
\Q/\Z)$ and $B^\vee = \Hom(B, \Q/\Z)$. 

\begin{defn}\label{def:pairing}
A {\bf pairing} is a bilinear map
$\tau : A \times B \rightarrow \Q/\Z$. It defines two homomorphisms
$\tau_\myleft: A \rightarrow B^\vee$ and $\tau_\myright : B
\rightarrow A^\vee$. The left and right kernels of $\tau$ are
$\ker(\tau_\myleft)$ and $\ker(\tau_\myright)$ respectively. If
$\tau_\myleft$ is injective then $\tau$ is called non-degenerate on
the left. If $\tau_\myright$ is injective then $\tau$ is called
non-degenerate on the right. Finally, $\tau$ is called {\bf non-degenerate}
if it is non-degenerate on the left and right.  
\end{defn}
\nomenclature[tauleft]{$\tau_\myleft$}{left homomorphism induced by the pairing $\tau$, see Def.\ \ref{def:pairing}}
\nomenclature[tauleftright]{$\tau_\myright$}{right homomorphism induced by the pairing $\tau$, see Def.\ \ref{def:pairing}}

If $A$ and $B$ have
exponent $n$ we will replace the codomain of $\tau$ by some other
cyclic group of order $n$, such as $\mu_n \subseteq \bar{F}$ if $n$
and $q$ are coprime.

The following two criteria for non-degeneracy of a pairing will be
useful.

\begin{lemma} \label{lemmapairingcrit1}
  Let $A$ and $B$ be finite abelian groups and $\tau : A \times B
  \rightarrow \Q/\Z$ a pairing. Then $\tau$ is non-degenerate if and
  only if $\tau$ is non-degenerate on the left (or right) and $\#A =
  \#B$.
\end{lemma}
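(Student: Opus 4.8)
The plan is to reduce everything to two standard facts about finite abelian groups: for a finite abelian group $G$ one has $\#G^\vee = \#G$, and the canonical evaluation map $G \to (G^\vee)^\vee$ is an isomorphism. Both are immediate from the structure theorem together with $\Hom(\Z/m\Z, \Q/\Z) \cong \Z/m\Z$, and I would simply recall them at the outset. I would also note at the start that the roles of $A$ and $B$ are symmetric, so the ``(or right)'' alternative in the statement will require no separate treatment.

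For the forward implication, assume $\tau$ is non-degenerate. By Definition~\ref{def:pairing} it is then in particular non-degenerate on the left (and on the right). Injectivity of $\tau_\myleft : A \to B^\vee$ gives $\#A \le \#B^\vee = \#B$, and injectivity of $\tau_\myright : B \to A^\vee$ gives $\#B \le \#A^\vee = \#A$, whence $\#A = \#B$.

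For the backward implication, by the symmetry just mentioned I may assume $\tau$ is non-degenerate on the left and $\#A = \#B$. Then $\tau_\myleft : A \to B^\vee$ is an injective homomorphism of finite groups with $\#A = \#B = \#B^\vee$, hence an isomorphism; in particular it is surjective. Now suppose $b \in B$ lies in the right kernel, so that $\tau(a,b) = 0$ for all $a \in A$. Since $\tau(a,b) = \tau_\myleft(a)(b)$ and $\tau_\myleft$ is surjective onto $B^\vee$, this says $\phi(b) = 0$ for every $\phi \in B^\vee$; by injectivity of the evaluation map $B \to (B^\vee)^\vee$ this forces $b = 0$. Thus $\ker(\tau_\myright) = 0$, so $\tau$ is non-degenerate on the right as well, and therefore non-degenerate.

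The argument is essentially a counting argument, so there is no real obstacle; the only point that needs care is citing the correct duality facts for finite abelian groups (in particular that evaluation $G \to (G^\vee)^\vee$ is injective), and observing the left–right symmetry so that the disjunction in the hypothesis is handled without duplication.
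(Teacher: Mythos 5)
Your proof is correct and follows essentially the same route as the paper: both rely on the cardinality fact $\#G^\vee = \#G$ for finite abelian $G$ and a counting argument to upgrade injectivity of $\tau_\myleft$ to bijectivity. If anything, your writeup is slightly more complete than the paper's, which stops once $\tau_\myleft$ is shown to be a bijection and leaves implicit both the verification of $\#A=\#B$ in the forward direction and the final step (via injectivity of the evaluation map $B\to(B^\vee)^\vee$) deducing injectivity of $\tau_\myright$.
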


\begin{proof}
  If $\tau$ is non-degenerate then it is non-degenerate on the left
  and right by definition. Conversely, suppose $\tau$ is
  non-degenerate on the left, so $\tau_\myleft$ is injective. We have
  $B \cong \prod_{i=1}^{n} B_i$ for suitable finite cyclic groups
  $B_i$.  Then $B_i^\vee \cong B_i$ and $B^\vee \cong \prod_{i=1}^n
  B_i^\vee \cong \prod_{i=1}^n B_i \cong B$, thus $\# B^\vee = \# B$.
  Since $\#B = \# A$ by assumption, $\tau_\myleft$ is also surjective
  by the finite and equal cardinalities of $A$ and~$B$.
\end{proof}

Let $\phi_i : A_i \rightarrow A_{i+1}$ and $\psi_i : B_{i+1}
\rightarrow B_i$ for $1 \leq i \leq 4$ denote two exact sequences of
finite abelian groups. Let $\tau_i : A_i \times B_i \rightarrow \Q /
\Z$ be pairings such that the maps $\phi_i$ and $\psi_i$ are adjoint
with respect to $\tau_i$ and $\tau_{i+1}$, that is $\tau_i( x,
\psi_i(y) ) = \tau_{i+1}(\phi_i(x), y)$ for all $x \in A_i$, $y \in
B_{i+1}$ and $1 \leq i \leq 4$.

\begin{lemma} \label{lemmapairingcrit2}
   If $\tau_1$, $\tau_2$, $\tau_4$ and $\tau_5$ are non-degenerate,
   then $\tau_3$ is non-degenerate.
\end{lemma}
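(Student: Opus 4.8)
The plan is to run the usual five-lemma-style diagram chase, but translated through the dualizing functor so that non-degeneracy of pairings plays the role that isomorphism plays in the ordinary five lemma. First I would record the two exact sequences $A_1 \to A_2 \to A_3 \to A_4 \to A_5$ and $B_5 \to B_4 \to B_3 \to B_2 \to B_1$. Applying $\Hom(-,\Q/\Z)$ to the $B$-sequence and using that $\Q/\Z$ is injective (so that $\Hom(-,\Q/\Z)$ is exact), I obtain an exact sequence $B_1^\vee \to B_2^\vee \to B_3^\vee \to B_4^\vee \to B_5^\vee$ whose maps are the transposes $\psi_i^\vee$. The adjointness hypothesis $\tau_i(x,\psi_i(y)) = \tau_{i+1}(\phi_i(x),y)$ says exactly that the squares formed by the vertical maps $(\tau_i)_\myleft : A_i \to B_i^\vee$ and the horizontal maps $\phi_i$ (top row) and $\psi_i^\vee$ (bottom row) commute. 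So I have a commutative ladder with exact rows
\[
\begin{array}{ccccccccc}
A_1 & \to & A_2 & \to & A_3 & \to & A_4 & \to & A_5 \\
\downarrow & & \downarrow & & \downarrow & & \downarrow & & \downarrow \\
B_1^\vee & \to & B_2^\vee & \to & B_3^\vee & \to & B_4^\vee & \to & B_5^\vee,
\end{array}
\]
where the vertical maps are $(\tau_1)_\myleft, \dots, (\tau_5)_\myleft$.

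Now I would invoke the five lemma. Since $\tau_1, \tau_2, \tau_4, \tau_5$ are non-degenerate and all the groups involved are finite, each of $(\tau_1)_\myleft, (\tau_2)_\myleft, (\tau_4)_\myleft, (\tau_5)_\myleft$ is an isomorphism: injectivity is non-degeneracy on the left, and surjectivity follows from Lemma~\ref{lemmapairingcrit1} together with $\#A_i = \#B_i = \#B_i^\vee$, which holds because non-degeneracy forces $\#A_i = \#B_i$ and a finite abelian group is (non-canonically) isomorphic to its dual. The five lemma then yields that $(\tau_3)_\myleft : A_3 \to B_3^\vee$ is an isomorphism, in particular injective, so $\tau_3$ is non-degenerate on the left. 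Finally, from $\#A_3 = \#B_3^\vee = \#B_3$ and Lemma~\ref{lemmapairingcrit1} again, $\tau_3$ is non-degenerate on the right as well, hence non-degenerate.

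The only genuine subtlety — and the step I would be most careful about — is bookkeeping: making sure the indexing of the two exact sequences lines up so that the dualized $B$-sequence really runs in the same direction as the $A$-sequence, and that the adjointness relation produces commuting squares rather than anticommuting ones or squares off by an index shift. Once the ladder is correctly assembled, the argument is a black-box application of the five lemma plus Lemma~\ref{lemmapairingcrit1}; no further computation is needed. (One should also note that we only ever use the middle three-out-of-five hypotheses in the "injective" direction of the five lemma together with surjectivity of the outer two, so in fact non-degeneracy of $\tau_1$ and $\tau_5$ on just one side, plus the cardinality equalities, would suffice — but stating it as in the lemma is cleaner.)
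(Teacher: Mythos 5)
Your proposal is correct and takes essentially the same route as the paper: dualize the $B$-sequence (exactness preserved since $\Q/\Z$ is injective), observe that adjointness makes the squares commute, use Lemma~\ref{lemmapairingcrit1} and finiteness to turn the outer four $(\tau_i)_\myleft$ into isomorphisms, apply the five lemma to $(\tau_3)_\myleft$, and conclude with Lemma~\ref{lemmapairingcrit1} again. The paper's version is simply terser; there is no substantive difference.
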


\begin{proof}
   Dualization gives an exact sequence $\psi_i^\vee : B_{i}^\vee
   \rightarrow B_{i+1}^\vee$, and the adjoint condition reads
   $\psi_i^\vee \circ (\tau_i)_\myleft = (\tau_{i+1})_\myleft \circ
   \phi_i$ for all $1 \leq i \leq 4$. As in
   Lemma~\ref{lemmapairingcrit1} the non-degeneracy of $\tau_i$ and
   finiteness of the groups imply that $(\tau_1)_\myleft$,
   $(\tau_2)_\myleft$, $(\tau_4)_\myleft$ and $(\tau_5)_\myleft$ are
   isomorphisms. Then $(\tau_3)_\myleft$ is an isomorphism by the five
   lemma and $\tau_3$ is non-degenerate by Lemma~\ref{lemmapairingcrit1}.
\end{proof}

\section{Surjectivity of the Artin Map} \label{sec::surj}

In this section we give a self-contained proof of the surjectivity of
the Artin map, as stated in Theorem~\ref{thm::artinsurjective}, that
reduces via a Galois twisting argument to the surjectivity of the
Artin map for constant field extensions as in
Lemma~\ref{frobconstext}. The proofs can be seen as a much simplified
version of the proof of the full Cebotarev density theorem from
\cite[Ch.\ 6]{main}{FrJa}.

\begin{defn}\label{def:frobenius}
The {\bf Frobenius automorphism} $\phiq$ of a constant field
extension $\FD$\nomenclature[Fb]{$\FD$}{constant field extension of $F$} of $F$ is defined as follows. Let $\F_{q^n}$ denote the
exact constant field of $\FD$. Then $F$ and $\F_{q^n}$ are linearly
disjoint over $\F_q$, and thus $\Gal(\FD|F) \cong \Gal(\F_{q^n}|\F_q)$ by
restriction of automorphisms. Then $\phiq$ is defined as the unique
extension of the $q$-power Frobenius automorphism of
$\Gal(\F_{q^n}|\F_q)$ to $\Gal(\FD|F)$. 
\end{defn}
\nomenclature[pxphi]{$\phiq$}{$q$-power Frobenius automorphism, see Def.\ \ref{def:frobenius}}

The definition is compatible
with restriction, so we use the same symbol $\phiq$ for different
constant field extensions without further mentioning.

\begin{lemma} \label{frobconstext}
   Let $\FD|F$ be a constant field extension. Then $\Gal(\FD|F)$ is
   generated by the Frobenius automorphism $\phiq$ and $$\Frob{\FD}{F}
   : \D(F) \rightarrow \Gal(\FD|F)$$ is given
   by $$\Frob{\FD}{F}(\DD) = \phiq^{\deg(\DD)}.$$ The zero divisor of
   $F$ is a modulus of $F'|F$. \nomenclature[deg]{$\deg \DD$}{degree of the divisor $\DD$, see \cite[Def.\ I.4.1]{main}{Sti}}   
\end{lemma}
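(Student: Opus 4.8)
The plan is to prove the three assertions of Lemma~\ref{frobconstext} in order, each being essentially a direct consequence of the definitions once the constant field extension structure is unwound.

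\textbf{Step 1: $\Gal(\FD|F)$ is generated by $\phiq$.} By Definition~\ref{def:frobenius}, restriction of automorphisms gives an isomorphism $\Gal(\FD|F) \cong \Gal(\F_{q^n}|\F_q)$, where $\F_{q^n}$ is the exact constant field of $\FD$. The latter group is cyclic of order $n$, generated by the $q$-power Frobenius $x \mapsto x^q$. Since $\phiq$ is by definition the element of $\Gal(\FD|F)$ corresponding to this generator under the isomorphism, $\phiq$ generates $\Gal(\FD|F)$.

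\textbf{Step 2: the formula $\Frob{\FD}{F}(\DD) = \phiq^{\deg(\DD)}$.} Since both sides are homomorphisms on $\D(F)$ and $\D(F)$ is generated by the places $\p$ of $F$, it suffices to check $\sigma_\p = \phiq^{\deg(\p)}$ for every place $\p$ of $F$. Fix a place $\q$ of $\FD$ above $\p$. Note first that $\FD|F$ is unramified everywhere (a constant field extension is unramified at all places, e.g.\ by \cite[Thm.\ III.6.3]{Sti}), so the Artin symbol $\sigma_\p$ is defined. The residue field of $\q$ is a finite extension of the residue field $\O_\p/\p \cong \F_{q^{\deg(\p)}}$, and in the residue field at $\q$ the Frobenius $\sigma_\p$ acts by $x \mapsto x^{\text{N}(\p)} = x^{q^{\deg(\p)}}$ by Definition~\ref{modulus}. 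On the other hand, $\phiq$ restricted to the constant field $\F_{q^n} \subseteq \FD$ is the $q$-power map, so $\phiq^{\deg(\p)}$ induces $x \mapsto x^{q^{\deg(\p)}}$ on $\F_{q^n}$ and hence on the residue field at $\q$, which contains $\F_{q^n}$ and is generated over it together with the image of $\O_\p$. Since an automorphism in $\Gal(\FD|F) \cong \Gal(\F_{q^n}|\F_q)$ is determined by its action on $\F_{q^n}$, and both $\sigma_\p$ and $\phiq^{\deg(\p)}$ induce the $q^{\deg(\p)}$-power map there, we get $\sigma_\p = \phiq^{\deg(\p)}$. Extending multiplicatively over $\DD = \sum_\p \ord_\p(\DD)\,\p$ yields $\Frob{\FD}{F}(\DD) = \prod_\p \phiq^{\deg(\p)\ord_\p(\DD)} = \phiq^{\deg(\DD)}$.

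\textbf{Step 3: the zero divisor is a modulus of $\FD|F$.} By Definition~\ref{modulus} with $\m = 0$ we must show $\H(F) = \H_0(F) \subseteq \ker \Frob{\FD}{F}$. If $f \in \Fx$ then $\div{F}{f}$ has degree zero \cite[Thm.\ I.4.11]{Sti}, so by Step~2 we have $\Frob{\FD}{F}(\div{F}{f}) = \phiq^{0} = \mathrm{id}$. Hence $\H(F) \subseteq \ker\Frob{\FD}{F}$, as required.

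I expect no serious obstacle here; the only point needing a little care is the identification $\sigma_\p = \phiq^{\deg(\p)}$ in Step~2, where one must argue that the action on residue fields already pins down the global automorphism via the isomorphism $\Gal(\FD|F) \cong \Gal(\F_{q^n}|\F_q)$, together with the fact that constant field extensions are everywhere unramified so that all the relevant symbols are defined.
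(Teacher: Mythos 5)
Your proof is correct and takes essentially the same approach as the paper: reduce Step 2 to comparing $\sigma_\p$ and $\phiq^{\deg\p}$ on the constant field $\F_{q^n}$ via the restriction isomorphism $\Gal(\FD|F)\cong\Gal(\F_{q^n}|\F_q)$, using the defining congruence modulo $\q$ to pin down $\sigma_\p$ on constants. One small presentational point: the paper compares the two automorphisms directly on $\F_{q^n}$, via a primitive element $z$ and the observation that a constant lying in $\q$ must be zero; your detour through the residue field of $\q$ makes the claim that $\phiq^{\deg\p}$ ``induces'' the $q^{\deg\p}$-power map on $\O_\q/\q$, which implicitly presupposes that $\phiq^{\deg\p}$ stabilizes $\q$ --- true, but only as a consequence of what you are trying to prove. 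The cleaner route is the one you in fact rely on and the paper spells out: $\sigma_\p(z)\equiv z^{\mathrm{N}(\p)}\bmod\q$ with both sides constants, and distinct constants remain distinct modulo $\q$, so $\sigma_\p$ and $\phiq^{\deg\p}$ agree on $\F_{q^n}$ outright.
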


\begin{proof}
   The first statement is clear from $\Gal(\FD|F) \cong
   \Gal(\F_{q^n}|\F_q)$.  For the second statement let $\p$ be a place
   of $F$ and $\q$ a place of $\FD$ above $\p$. Then
   $\Frob{\FD}{F}(\p)(x) \equiv x^{\text{N}(\p)} \bmod \q$ for all $x
   \in \O_\q$ by the definition of $\Frob{\FD}{F}(\p)$. Let $z$ be a
   primitive element of $\F_{q^n}|\F_q$. Then $z$ is also a primitive
   element of $\FD|F$, and~$z \in \O_\q$ since it is a constant.  Now
   $\Frob{\FD}{F}(\p)(z)$ and $z^{\text{N}(\p)}$ are also constants,
   so $\Frob{\FD}{F}(\p)(z) - z^{\text{N}(\p)} \in \q$ is a constant
   with zeros and hence must be identically zero. Thus
   $\Frob{\FD}{F}(\p)(z) = z^{\text{N}(\p)} =
   \phiq^{\deg(\p)}(z)$. Since $\Frob{\FD}{F}(\p)$ and
   $\phiq^{\deg(\p)}$ are $F$-linear and agree on the primitive
   element $z$ of $\FD|F$ we get $\Frob{\FD}{F}(\p) =
   \phiq^{\deg(\p)}$ on all of $\FD$. Finally, by linearity,
   $\Frob{\FD}{F}(\DD) = \phiq^{\deg(\DD)}$ for all divisors $\DD \in \D(F)$.

   If $\DD \in \H(F)$ then $\deg(\DD) = 0$ and $\Frob{\FD}{F}(\DD) =
   \phiq^{\deg(\DD)} = \id$, so $\H(F) \subseteq \ker
   \Frob{\FD}{F}$ and $0$ is a modulus of $\FD|F$.
\end{proof}

\begin{thm} \label{thm::artinsurjective}
   Let $E|F$ be an abelian extension. The Artin map defines an
   epimorphism $$\Frob{E}{F} : \DmF \rightarrow \Gal(E|F)$$ for
   any effective divisor $\m$ containing the ramified places of $F$.
\end{thm}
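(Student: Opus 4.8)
The strategy is to reduce the general case to the already-established case of constant field extensions (Lemma~\ref{frobconstext}) via the Galois twisting argument referenced in the introduction. First I would observe that it suffices to prove the image of $\Frob{E}{F}$ has finite index equal to one; concretely, fixing a subgroup $H$ of $\D_\m(F)$ equal to the image of the Artin map, I want to show $H = \D_\m(F)$. Since $\Gal(E|F)$ is abelian, it is enough to treat the case where $E|F$ is cyclic: indeed, $\Gal(E|F)$ embeds into a product of cyclic quotients $\Gal(E_i|F)$, and by Theorem~\ref{thm::artinfunktor}$(i)$ surjectivity of $\Frob{E}{F}$ follows from surjectivity of all the $\Frob{E_i}{F}$ together with the fact that an abelian group surjecting onto each factor of a finite product of cyclic groups via a single map need only be checked on a common refinement — more carefully, one reduces to $E|F$ cyclic of prime-power order and then patches. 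So assume $E|F$ is cyclic of degree $m$.

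Next comes the twisting construction. Let $\tilde F = F \F_{q^m}$ be the constant field extension of degree $m$ (or a suitable multiple of the order of any obstruction), and consider the compositum $E \tilde F$. The Galois group $\Gal(E\tilde F | F)$ contains $\Gal(E|F)$ and the cyclic group generated by the Frobenius $\phiq$. Pick an element $\sigma \in \Gal(E|F)$ that we wish to hit; lift it to an automorphism $\tau$ of $E \tilde F$ over $F$ that acts as $\sigma$ on $E$ and as $\phiq$ on $\tilde F$ (this is possible because $E$ and $\tilde F$ may be taken linearly disjoint over the constant field of $E$, after replacing $\tilde F$ by a large enough constant extension so that $E \cap \tilde F = F$; if $E$ itself contains constants one absorbs them into the bookkeeping). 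The fixed field $M$ of $\langle \tau \rangle$ inside $E \tilde F$ is then a finite extension of $F$ with $E\tilde F | M$ cyclic, and crucially $E \tilde F = M \tilde F'$ is a \emph{constant field extension of $M$}, since $\tau$ restricted to constants generates the relevant constant Galois group. By Lemma~\ref{frobconstext}, $\Frob{E\tilde F}{M}$ is surjective, so there is a divisor $\DD'$ of $M$ with support away from $\m$ whose Artin symbol is $\tau$ (or a generator of $\langle\tau\rangle$; a degree-coprimality or Chinese-remainder adjustment lets us hit $\tau$ itself).

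Now I push $\DD'$ back down to $F$. By Theorem~\ref{thm::artinfunktor}$(i)$ applied with the tower $F \subseteq M \subseteq E\tilde F$ and the intermediate field $E$, we get $\Frob{E}{F}(\Norm{M}{F}(\DD')) = \Frob{E\tilde F}{M}(\DD')\big|_E = \tau\big|_E = \sigma$. Since $\Norm{M}{F}(\DD') \in \D_\m(F)$ (norms preserve support conditions, as recorded before Definition~\ref{modulus}), this exhibits $\sigma$ in the image of $\Frob{E}{F}$. As $\sigma$ was arbitrary, $\Frob{E}{F}$ is surjective for cyclic $E|F$, and the reduction at the start finishes the general abelian case.

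The main obstacle I expect is the careful setup of the twisting: one must choose the constant extension $\tilde F$ large enough that $\tau$ genuinely generates a group whose fixed field $M$ makes $E\tilde F|M$ a constant field extension, while simultaneously controlling $E \cap \tilde F$ and keeping track of the order of $\tau$ so that the Artin symbol produced in $M$ can be arranged to equal $\tau$ and not merely a power of it. The divisor $\DD'$ furnished by Lemma~\ref{frobconstext} only controls its image under $\Frob{E\tilde F}{M}$ up to the degree, so a small amount of arithmetic — adding a place of appropriate degree, or invoking that we only need to hit a generating set of $\Gal(E|F)$ — is needed to nail down the exact automorphism. Everything else is formal functoriality of the Artin map and the norm.
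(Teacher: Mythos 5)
Your core idea --- twist by a lift $\tau$ of $\phiq$ that agrees with the target automorphism $\sigma$ on $E$, take the fixed field $M$ of $\tau$ so that $E\tilde F|M$ becomes a constant field extension, invoke Lemma~\ref{frobconstext}, and push the resulting divisor down by $\Norm{M}{F}$ using Theorem~\ref{thm::artinfunktor}$(i)$ --- is exactly the twisting argument the paper uses in step $(ii)$ of its proof. But your opening reduction from abelian to cyclic has a real gap. You claim that surjectivity of $\Frob{E}{F}$ follows from surjectivity of the $\Frob{E_i}{F}$ onto the cyclic factors $\Gal(E_i|F)$ of a product decomposition. That inference is false: a subgroup of $\prod_i C_i$ can surject onto every direct factor without being the whole group (the diagonal in $\Z/p\Z \times \Z/p\Z$ already does this). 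What would actually work is to check surjectivity onto every \emph{quotient} $\Gal(E|F)/M$ over every maximal subgroup $M$, since a proper subgroup is contained in some maximal $M$; but that is a different statement than surjecting onto direct factors, and your ``common refinement/patching'' remark does not supply it. The paper sidesteps this entirely with a pointwise argument: given $\sigma \in \Gal(E|F)$, set $L = \Fix(\sigma)$, so $E|L$ is cyclic generated by $\sigma$; by the cyclic case (applied with base field $L$) there is $\DD \in \D_\m(L)$ with $\Frob{E}{L}(\DD) = \sigma$, and then $\Frob{E}{F}(\Norm{L}{F}(\DD)) = \sigma$ by functoriality. No group-theoretic bookkeeping is needed.

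On the cyclic case itself, your handling of the situation where $E$ already contains constants (``absorbs them into the bookkeeping'') is where your linear-disjointness worry comes from, and the paper resolves it by splitting into two sub-steps rather than one. First it treats $E|F$ \emph{regular} cyclic (same exact constant field $\F_q$): then $E$ and $F\F_{q^n}$ really are linearly disjoint over $F$, the twist works cleanly, and the fixed field $E_\tau$ of $\tau$ has exact constant field $\F_q$, so $E\F_{q^n}|E_\tau$ is a genuine constant field extension. Second, for arbitrary cyclic $E|F$, it peels off the constant subextension $L = F\F_{q^n}$ (with $\F_{q^n}$ the exact constant field of $E$), over which $E|L$ is regular cyclic, and chains the two cases via Theorem~\ref{thm::artinfunktor}$(i)$ and norms, again pointwise. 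Finally, your concern that Lemma~\ref{frobconstext} only controls the image ``up to the degree'' does not require any coprimality fix: since $F$ has a divisor of degree one and the approximation theorem lets you choose it coprime to $\m$, and the Artin map of a constant field extension is $\DD \mapsto \phiq^{\deg(\DD)}$, you hit $\phiq$ itself on the nose.
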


\begin{proof}
    The proof is by generalizations from special to more general
    cases.

    $(i)$: Let $E|F$ be a constant field extension.
    By~\cite[p.\ 191]{main}{Sti} there is a divisor $\DD$ of $F$ of degree
    one. The approximation theorem \cite[p.\ 12, p.\ 33]{main}{Sti} shows that $\DD$
    can be chosen coprime to $\m$. Then $\Frob{E}{F}(\DD)$ is a
    generator of $\Gal(E|F)$ by Lemma~\ref{frobconstext}, and $\Frob{E}{F}$ is thus
    surjective.

    $(ii)$: Let $E|F$ be regular and cyclic of degree $n$. Denote the
    exact constant field of $F$ and $E$ by $\F_q$.  Define $L = E
    \F_{q^n} = E ( F \F_{q^n})$. Then $L|F$ is abelian and ramified
    only inside $\m$. More precisely, since $E$ and $F \F_{q^n}$ are
    linearly disjoint over $F$, we have $\Gal(L|F) \cong \Gal(E|F)
    \times \Gal( F \F_{q^n} | F )$ by restriction.  Thus there is
    $\tau \in \Gal(L|F)$ such that $\tau|_E$ is a generator $\sigma$
    of $\Gal(E|F)$ and $\tau|_{F \F_{q^n}} = \phiq$ is a generator of
    $\Gal( F \F_{q^n} | F )$. Let $E_{\tau}$ be the fixed field of
    $\tau$ in $L$. Then $L|E_\tau$ is cyclic of degree $n$ generated
    by $\tau$, and the exact constant field of $E_\tau$ is
    $\F_q$. Thus $L = E_\tau \F_{q^n}$, and $L|E_\tau$ is a constant
    field extension of degree $n$. By $(i)$ there is $\DD \in
    \D_\m(E_\tau)$ such that $\Frob{L}{E_\tau}(\DD) = \tau$. Define $\E =
    \Norm{E_\tau}{F}(\DD) \in \DmF$. Then $\tau =
    \Frob{L}{E_\tau}(\DD) = \Frob{L}{F}(\E)$ and $\sigma = \tau|_E =
    \Frob{E}{F}(\E)$. Thus $\Frob{E}{F}$ is surjective.

    $(iii)$: Let $E|F$ be arbitrary cyclic and let $\F_{q^n}$ be the
    exact constant field of $E$. Let $L = F \F_{q^n}$. Then $L|F$ is a
    constant field extension and $E|L$ is regular and cyclic.  By
    $(i)$ and $(ii)$, $\Frob{L}{F} : \DmF \rightarrow \Gal(L|F) $ and
    $\Frob{E}{L} : \D_\m(L) \rightarrow \Gal(E|L)$ are surjective.  To
    prove that $\Frob{E}{F}$ is surjective let $\sigma \in
    \Gal(E|F)$. Then there is $\DD \in \DmF$ such that
    $\Frob{L}{F}(\DD) = \sigma|_L$. Let $\tau = \sigma \circ
    \Frob{E}{F}(\DD)^{-1}$. Then $\tau \in \Gal(E|L)$ and there is $\A
    \in \D_\m(L)$ such that $\Frob{E}{L}(\A) = \tau$. Let $\B =
    \Norm{L}{F}(\A) \in \DmF$. Then $\Frob{E}{F}(\B) = \tau$ and $\sigma
    = \tau \circ \Frob{E}{F}(\DD) = \Frob{E}{F}(\B) \circ
    \Frob{E}{F}(\DD) = \Frob{E}{F}(\B + \DD).$ Thus $\Frob{E}{F}$ is
    surjective.

    $(iv)$: Finally, let $E|F$ be abelian and let $\sigma \in
    \Gal(E|F)$. Let $L$ denote the fixed field of $\sigma$ in $E$.
    Then $E|L$ is cyclic with generator $\sigma$.  By $(iii)$ we have
    $\sigma = \Frob{E}{L}(\DD)$ for some $\DD \in \D_\m(L)$. Let $\E =
    \Norm{L}{F}(\DD) \in \DmF$. Then $\sigma = \Frob{E}{L}(\DD) =
    \Frob{E}{F}( \E )$. Thus $\Frob{E}{F}$ is surjective.
\end{proof}

\section{Reciprocity Law} \label{sec::reciprocity}

Let $E|F$\nomenclature[E]{$E$}{abelian extension field of $F$} be abelian of order coprime to $q$ and unramified outside
$\m$. The goal of this section is to give a new, function field
specific proof that $\m$ is a modulus of $E|F$.  This is also known as
the reciprocity law. Our main tools will be Weil reciprocity and a
generalization of a result by Hasse \cite{main}{Has} on the algebraic
representation of $\Frob{E}{F}$ by means of function evaluation, when
$E|F$ is cyclic and $F$ contains enough roots of unity, to the case of
arbitrary $F$, as given in Lemma~\ref{ext}.

We fix some notation.

\begin{defn}
Let $L$ be a finite extension of $F$ with exact
constant field $\GF_{q^r}$. Let $\p$ be a place of $L$ and $f \in
\Op$. The residue class field $\Op/\p$ of $\p$ is denoted by $L_\p$,
and the image of $f$ in $L_\p$ is denoted by $f_\p$. If $\DD \in
\D(L)$ is coprime to $f$ we define the {\bf evaluation} $f(\DD)$ of $f$ at
$\DD$ as
 \begin{equation}\label{functionevaluation}
f(\DD) = \prod_{\p \in \supp(\DD)}\Norm{L_\p}{\GF_{q^r}}(f_\p)^{\ord_\p(\DD)}.
 \end{equation}%
\nomenclature[ordpd]{$\ord_\p(\DD)$}{coefficient of the place $\p$ in the divisor $\DD$}%

\nomenclature[Fp]{$F_\p$}{residue class field $\Op/\p$ of the place $\p$ of $F$, see \cite[Def.\ I.1.13]{main}{Sti}}
\nomenclature[fpp]{$f_\p$}{image of $f \in F$ in the residue class field $F_\p$, see \cite[Def.\ I.1.13]{main}{Sti}}
\nomenclature[faaaa]{$f(\DD)$}{evaluation of $f \in F$ at $\DD \in \D(F)$, see eq.\ (\ref{functionevaluation}) with $L = F$}

Let $n$ be a positive
integer, $\SSS$\nomenclature[S]{$\SSS$}{set of places of $F$} an arbitrary set of places of $F$ and $\TTT$\nomenclature[T]{$\TTT$}{set of places of $L$} the set of
places of $L$ lying above the places of $\SSS$. 
We define the {\bf generalized Selmer group}
\begin{align*} L_{n,\SSS} & 
   = \{ \, f \in L^\times \,|\, \ord_\p(f) \equiv 0 \bmod n \text{ for all }
   \p \not\in \TTT \, \}.
\end{align*}
If $\m$ is an effective divisor of $F$
then we also write $$L_{n, \m} = L_{n, \supp(\m)}.$$
\end{defn}

The group $L_{n,\emptyset}$ is the ordinary Selmer group as
defined in \cite[p. 231]{main}{Coh}.

\begin{rmk}
 This notation is most frequently used in this paper for $L = F$ and $\SSS = \supp(\m)$. Then we have
 \begin{eqnarray}
  F_{n,\SSS} & = & \{ f \in \Fx \mid \ord_\p(f) \equiv 0 \bmod n \text{ for all } \p \notin \SSS \}\label{selmers}\\
  F_{n,\m} & = & \{f \in \Fx \mid \ord_\p(f) \equiv 0 \bmod n \text{ for all } \p \notin \supp(\m)\}.\label{selmer}
 \end{eqnarray}
\end{rmk}
\nomenclature[Fns]{$F_{n,\SSS}$}{Selmer group of $F$ w.r.t. $\SSS$, see eq.\ (\ref{selmers})}
\nomenclature[Fnm]{$F_{n,\m}$}{Selmer group of $F$ w.r.t. $\m$, see eq.\ (\ref{selmer})}

For the rest of this section we let $\ED|F$ be an abelian extension
containing an intermediate field $F'$ such that $\FD|F$ is a constant
field extension and $\mu_n \subseteq \FD$.  Also, let $\m$ be an
effective divisor such that $E'|F$ is unramified outside~$\m$.
We consider $\Frob{E'}{F} : \DmF \rightarrow \Gal(E'|F)$.

\begin{lemma} \label{ext}
   Let $n$ be coprime to $q$, and suppose that $\ED|\FD$ is cyclic of degree dividing $n$.  
   Let $\DD \in \DmF$. Then $\Frob{\ED}{F}(\DD)$ is described by the
   following expression.

   There is an extension $\sigma$ of $\phiq$ to $\ED$ and $y
   \in \ED$ such that $y^n \in F'_{n, \m}$ and $y^n$ is coprime to
   $\Con{F'}{F}(\DD)$.  Let $h = \sigma^{-1}(y)^q y^{-1}.$ Then $h \in
   F'^\times$ is coprime to $\Con{F'}{F}(\DD)$
   and $$\Frob{\ED}{F}(\DD) = \tau_{\DD} \circ \sigma^{\deg \DD}.$$
   Here $\tau_{\DD} \in \Gal(\ED|\FD)$ is defined by $$\tau_{\DD}(y)
   y^{-1} = h(\Con{\FD}{F}(\DD)) \in \mu_n,$$ where $ h(\Con{\FD}{F}(\DD))$ stands for the evaluation of the function $h$ at the divisor $ \Con{\FD}{F}(\DD)$, see~(\ref{functionevaluation}) with $L = F'$.
\end{lemma}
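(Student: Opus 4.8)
The plan is to make everything explicit by choosing a Kummer generator for $E'|F'$ and then computing the Artin symbol directly from its defining congruence. Since $\mu_n \subseteq F'$ and $E'|F'$ is cyclic of order dividing $n$ with $n$ coprime to $q$, Kummer theory gives $E' = F'(\sqrt[n]{a})$ for some $a \in F'^\times$, and we may take $y \in E'$ with $y^n = a$. After modifying $a$ by $n$-th powers in $F'^\times$ (which does not change $E'$ and only changes $y$ by an $n$-th root of unity, hence by an element of $F'$), we can arrange that $a = y^n \in F'_{n,\m}$ and that $a$ is coprime to $\Con{F'}{F}(\DD)$; this uses the approximation/strong approximation theorem exactly as in the proof of Theorem~\ref{thm::artinsurjective}, part $(i)$, together with the fact that $F'|F$ being a constant field extension means places of $F'$ above $\supp(\m)$ are the right thing to avoid. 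This establishes the existence claims in the statement.

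Next I would verify that $h := \sigma^{-1}(y)^q y^{-1}$ lies in $F'^\times$ and is coprime to $\Con{F'}{F}(\DD)$. The point is that $\sigma$ restricts to $\phiq$ on $F'$, so $\sigma^{-1}$ acts on $F'$ as $\phiq^{-1}$, and $h^n = \sigma^{-1}(y^n)^q (y^n)^{-1} = \phiq^{-1}(a)^q a^{-1}$; one checks $\phiq^{-1}(a)^q$ differs from $a$ only through the constant field action, so $h^n \in F'^\times$. More carefully, $h$ itself is visibly fixed by $\Gal(E'|F')$: for any $\rho \in \Gal(E'|F')$ we have $\rho(y) = \zeta y$ for some $\zeta \in \mu_n \subseteq F'$, and since $\rho$ commutes with $\sigma$ up to an element of $\Gal(E'|F')$ — here one must be a little careful, but the cleanest route is to note $\sigma^{-1}(y)/y$ and $y$ have controlled behaviour and compute $\rho(h)/h$ directly and see it is $1$. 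Coprimality of $h$ to $\Con{F'}{F}(\DD)$ follows from coprimality of $y^n = a$ to that divisor together with the fact that $\sigma^{-1}$ and the $q$-th power map do not introduce new zeros or poles at those places.

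The heart of the argument is computing $\Frob{E'}{F}(\DD)$ place by place and matching it with $\tau_{\DD} \circ \sigma^{\deg \DD}$. By linearity it suffices to treat $\DD = \p$ a single place of $F$ with $\deg(\p) = d$, coprime to $\m$ and unramified in $E'$. The Frobenius $\sigma_\p = \Frob{E'}{F}(\p)$ is characterized by $\sigma_\p(x) \equiv x^{\mathrm{N}(\p)} \bmod \q$ for $x \in \O_\q$, for $\q$ above $\p$ in $E'$. Writing $\mathrm{N}(\p) = q^d$, I would compare $\sigma_\p$ with $\sigma^d$, which acts on $F'$ as $\phiq^d$, i.e. also as the $q^d$-power map on residues at the relevant place of $F'$. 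Hence $\sigma_\p \circ \sigma^{-d}$ fixes (the relevant completion of) $F'$ modulo the place and therefore lies in $\Gal(E'|F')$; being in that cyclic group it is determined by its effect on $y$, namely by the element $\sigma_\p(\sigma^{-d}(y))/\sigma^{-d}(y) \in \mu_n$. The remaining computation expresses this ratio as a telescoping product: iterating the relation defining $h$, one gets $\sigma^{-d}(y)/y = \prod_{j=0}^{d-1} \sigma^{-(j+1)}(y)/\sigma^{-j}(y)$, and each factor is $\phiq$-applied powers of $h$ evaluated appropriately; reducing modulo $\q$ and using that residue-field norms along $F'_{\p'}|\F_{q^r}$ are exactly the product over the Frobenius orbit converts this into the single evaluation $h(\Con{F'}{F}(\p))$. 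So $\tau_{\p}(y)/y = h(\Con{F'}{F}(\p))$ and $\Frob{E'}{F}(\p) = \tau_\p \circ \sigma^d$, which is the claim; summing over the places in $\DD$ finishes it.

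The main obstacle I expect is the bookkeeping in the last step: carefully tracking the interaction between the Galois action of $\sigma$ (which permutes the places of $E'$ and of $F'$ above $\p$), the residue-field norm $\Norm{F'_{\p'}}{\F_{q^r}}$ appearing in the definition of function evaluation, and the reduction mod $\q$ of the telescoping product for $\sigma^{-d}(y)/y$. Concretely, one has to check that the $\phiq$-conjugates of $h$ that appear are precisely summed by the norm to the constant field, and that the exponent $q$ in the definition $h = \sigma^{-1}(y)^q y^{-1}$ (rather than just $\sigma^{-1}(y) y^{-1}$) is exactly what is needed to make the telescoping identity close up correctly with $\mathrm{N}(\p) = q^d$. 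This is the same kind of calculation that underlies the bilinearity and non-degeneracy of the Ate pairing, and it is straightforward once the indexing conventions are fixed, but it is the one place where an error in normalization would propagate.
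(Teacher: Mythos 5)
Your setup matches the paper's (Kummer generator, approximation to move $y^n$ off $\Con{F'}{F}(\DD)$, compute $\rho(h)/h$ using abelianness of $E'|F$ to see $h \in F'^\times$, reduce to a single place $\p$, compare $\Frob{E'}{F}(\p)$ with $\sigma^{\deg\p}$), but there is a genuine gap at the heart of the computation.

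The proposed telescoping $\sigma^{-d}(y)/y = \prod_{j=0}^{d-1}\sigma^{-(j+1)}(y)/\sigma^{-j}(y)$ is trivially true, but the claim that ``each factor is $\phiq$-applied powers of $h$'' fails. The relation defining $h$ is $q$-twisted: $h = \sigma^{-1}(y)^q\,y^{-1}$, equivalently $\sigma^{-1}(y)^q = hy$, so $\sigma^{-1}(y)/y$ satisfies $\bigl(\sigma^{-1}(y)/y\bigr)^q = h\,y^{1-q}$ and is in general neither a power of $h$ nor even an element of $F'$ (note $\sigma^{-1}(y)^n = \phiq^{-1}(y^n)\neq y^n$ in general). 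A linear telescope cannot track the $q$-power exponents. The correct structure, used in the paper, is the forward twisted iteration $\sigma(y) = y^q g$ with $g=\phiq(h)^{-1}$, giving $\sigma^d(y) = y^{q^d}\prod_{j=0}^{d-1}\phiq^j(g)^{q^{d-1-j}}$. The $y^{q^d}$ factor then cancels against the defining congruence $\Frob{E'}{F}(\p)(y)\equiv y^{q^d}\bmod\q$, which is exactly why the exponent $q$ in the definition of $h$ is forced. You flag this exponent as ``the one place where an error in normalization would propagate,'' which is correct, but the sketch does not actually resolve it. Relatedly, the quantity $\sigma_\p(\sigma^{-d}(y))/\sigma^{-d}(y)$ you write down is $\rho(y)/\sigma^{-d}(y)$ for $\rho=\sigma_\p\circ\sigma^{-d}$, which has an extraneous factor $y/\sigma^{-d}(y)$; the root of unity you want is $\rho(y)/y$.

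A second, smaller omission: even once you have the congruence $\zeta^{q^d}\prod_j\phiq^j(g)^{q^{d-1-j}}\equiv1\bmod\q$, turning the right-hand product into the single evaluation $h(\Con{F'}{F}(\p))$ requires more than observing that the residue-field norm is a Frobenius-orbit product. The paper passes to a constant field extension $L$ of $F'$ in which $\p$ splits completely, so that all places above $\p$ have degree one and $\phiq^{-j}$ acts by $q^{-j}$-th powering on constants; this is what makes the exponents $q^{d-1-j}$ line up with the norm and is an actual step, not pure bookkeeping. Your sketch does not mention this extension, and without it the matching of exponents does not go through.
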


\begin{proof}
  Since $\ED|F$ is normal, there exists an extension $\sigma \in
  \Gal(\ED|F)$ of $\phiq$. 

  By Kummer theory there is $y_0 \in \ED$ such that $\ED = \FD(y_0)$
  and $y_0^n \in \FD^\times$. Define $f_0 = y_0^n$. Then $f_0 \in
  F'_{n,\m}$ since $\ED|\FD$ is unramified outside $\m$, by
  \cite[p.\ 111]{main}{Sti}. Abbreviate $\SSS = \supp(\Con{F'}{F}(\DD))$.
  Since $f_0 \in F'_{n,\m}$, we can find $g \in \FDx$ such that $\supp(f_0 g^n)$ is disjoint from
  $\SSS$  by the approximation theorem. Define $f = f_0 g^n$ and $y = y_0 g$. Then clearly $\ED =
  \FD(y)$ and $y^n = f \in F'_{n,\m}$ is coprime to
  $\Con{F'}{F}(\DD)$.

  Let $\tau \in \Gal(\ED|\FD)$. Then $\tau(y) = \zeta y$
  for some $\zeta \in \ew$. Since $\ED|F$ is abelian we get
  \begin{align*} \tau( h ) & = (\tau \circ \sigma^{-1})(y)^q
  \tau(y)^{-1} = (\sigma^{-1} \circ \tau)(y)^q \tau(y)^{-1} \\ & =
  \sigma^{-1}(\zeta y)^q ( \zeta y )^{-1} = h. \end{align*} Since
  $\tau$ is arbitary it follows that $h \in \FDx$. Also, we have $h^n
  = \sigma^{-1}(y^n)^q (y^n)^{-1} = \phiq^{-1}(f)^q f^{-1}$ and
  $$\supp(h) = \supp(h^n) = \supp( \phiq^{-1}(f)^q f^{-1}).$$ Now
  $\supp(f) \cap \SSS = \emptyset$ and $\phiq(\SSS) = \SSS$, so we obtain
  $\supp(h) \cap \SSS = \emptyset$, and $h$ is coprime to
  $\Con{F'}{F}(\DD)$.

  Let $\p$ be a place in the support of $\DD$. Then
  $$\Frob{\ED}{F}(\p) = \tau \circ \sigma^{\deg(\p)}$$ for some $\tau
  \in \Gal(\ED|\FD)$. Indeed, we have $$\Frob{\ED}{F}(\p)|_{\FD} =
  \Frob{\FD}{F}(\p) = \phiq^{\deg \p},$$ where the last equality holds
  because $\FD|F$ is a constant field extension, by
  Lemma~\ref{frobconstext}. Then $\tau = \Frob{\ED}{F}(\p)
  \circ \sigma^{-\deg(\p)}$ is the required automorphism.

  Now $\tau(y) = \zeta y$ for some $\zeta \in \ew$. We show below
  that $$\zeta = h(\Con{\FD}{F}(\p)).$$ Then $\tau = \tau_\p$ and 
  $$\Frob{\ED}{F}(\p) = \tau_{\p} \circ \sigma^{\deg \p}.$$ From this
  the assertion follows for $\DD$ by the linearity of the maps $\DD \mapsto
  \sigma^{\deg(\DD)}$, $\DD \mapsto \Frob{\ED}{F}(\DD)\circ
  \sigma^{-\deg(\DD)}$ as $\Gal(\ED|F)$ is abelian, $\tau \mapsto
  \zeta$, and $\DD \mapsto h(\Con{\FD}{F}(\DD))$.

  We are left to show $\zeta = h(\Con{\FD}{F}(\p))$.  Let $g =
  \phiq(h)^{-1}$ and $d = \deg(\p)$. Then $g \in \FDx$, $\supp(g) \cap
  \SSS = \emptyset$ and $\sigma(y) = y^q g$ observing $\sigma(g) =
  \phi(g)$ since $g \in \FDx$. Iterated application of $\sigma$ to $y$
  gives \begin{equation*} \sigma^d(y) = y^{q^d} \prod_{j=0}^{d-1}
    \phiq^j(g)^{q^{d-1-j}} .
   \end{equation*} Using this we have
 \begin{align} \notag 
  \Frob{\ED}{F}(\p)(y) & = (\tau \circ \sigma^d) (y)
  = \tau \left( y^{q^d}
  \prod_{j=0}^{d-1} \phiq^j(g)^{q^{d-1-j}} \right) \\ & =
  \label{prod} \zeta^{q^d}
  y^{q^d} \prod_{j=0}^{d-1} \phiq^j(g)^{q^{d-1-j}}
 \end{align}
 by direct computation. On the other hand, we have
 \begin{equation} \label{artin}
  \Frob{\ED}{F}(\p)(y) \equiv y^{q^d} \bmod \q
 \end{equation}
 for all places $\q$ of $\ED$ above $\p$ by the definition of
 $\Frob{\ED}{F}(\p)$. Notice that we have chosen $y$ such that this is
 well-defined and non-zero, i.e.\ $y \in \O_\q^\times$ for all $\q$. By
 equating (\ref{prod}) and (\ref{artin}) and canceling $y^{q^d}$, we
 get
 \begin{equation} \label{zetap}
  \zeta^{q^d} \cdot \prod_{j=0}^{d-1} \phiq^j(g)^{q^{d-1-j}} \equiv 1 \bmod \q.
 \end{equation}
 Both sides and all factors of the left side are already in $\FD$, so
 the congruence holds in fact modulo all places $\q$ of $\FD$ above
 $\p$, and then also modulo all places $\q$ of arbitrary constant
 field extensions~$L$ of $\FD$ above $\p$.
 
 We now consider a constant field extension $L$ of $\FD$ such that
 $\p$ splits completely in $L$, i.e.  $ \Con{L}{F}(\p) =
 \sum_{j=0}^{d-1} \q_j $ with $\deg \q_j = 1$ for all $j$.  The $\q_j$
 are all conjugates, say $\q_j = \phiq^{-j}(\q)$ for $\q =
 \q_0$. Finally, the support of $g$ is disjoint from the support of
 $\Con{L}{F}(\p)$.  We compute
 \begin{align*}
 g(\Con{L}{F}(\p))^{q^{d-1}} & = \prod_{j=0}^{d-1}
 g(\phiq^{-j}(\q))^{q^{d-1}} = \prod_{j=0}^{d-1}
 \phiq^{-j}(\phiq^j(g)(\q))^{q^{d-1}} \\ & = \prod_{j=0}^{d-1}
 \phiq^j(g)(\q)^{q^{d-1-j}} \equiv \prod_{j=0}^{d-1}
   \phiq^j(g)^{q^{d-1-j}} \equiv \zeta^{-q^d} \bmod{\q}.
 \end{align*}
 Here the first equation holds by definition of function evaluation,
 the second equation holds since function evaluation commutes with
 automorphisms, the third equation holds since $\phiq^{-j}$ is raising
 to the power $q^{-j}$ on the constant field of $L$, the first
 congruence holds since $\deg(\q) = 1$, and the second congruence holds
 by~\eqref{zetap} and the remark thereafter.

 As $g(\Con{L}{F}(\p))^{q^{d-1}}$ and $\zeta^{-q^d}$ are elements
 of the constant field of $L$, and $L$ has
 trivial intersection with $\q$, we get $$g(\Con{L}{F}(\p))^{q^{d-1}}
 = \zeta^{-q^d} \text{ \quad and thus \quad }
 g(\Con{L}{F}(\p))^{-q^{-1}} = \zeta.$$ Finally,
 \begin{align*} h( \Con{\FD}{F}(\p) ) & = 
(\phiq^{-1}(g)^{-1})( \Con{\FD}{F}(\p) ) =
   g(\Con{\FD}{F}(\p))^{-q^{-1}} \\ & = g(\Con{L}{F}(\p))^{-q^{-1}} =
   \zeta = \tau(y)/y \in \mu_n.\end{align*} Here the first equation
 holds by definition of $g$, the second equation holds since function
 evaluation commutes with automorphisms and $\phiq^{-1}( \Con{\FD}{F}(\p) )
 = \Con{\FD}{F}(\p)$, the third equation holds by the invariance of
 function evaluation under constant field extension, and $\zeta =
 \tau(y)/y \in \mu_n$ by definition.
\end{proof}

The following theorem is well-known. It is obvious for rational
function fields and follows for arbitrary algebraic function fields 
by a reduction to the rational case.

\begin{thm}[Weil reciprocity] \label{thm::weilrec}
 Let $f,g \in \Fx$ such that $\div{F}{f}$ and $\div{F}{g}$ have
 disjoint support. Then
 $$ f(\div{F}{g}) = g(\div{F}{f}).$$
\end{thm}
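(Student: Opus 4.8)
Weil reciprocity is a statement about function evaluation at principal divisors, and the standard way to establish it is via reduction to the rational function field. The strategy is: first verify the identity directly for $F = \F_q(x)$ using the explicit factorization of polynomials into linear factors over $\bar\F_q$; then, for a general algebraic function field $F$, use a suitable rational subfield $\F_q(x) \subseteq F$ with $F|\F_q(x)$ finite separable and relate the evaluations in $F$ to evaluations in $\F_q(x)$ via the norm of functions, reducing to the rational case.

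\begin{proof}
We first treat the rational function field $F_0 = \GF_q(x)$. Writing $f, g \in F_0^\times$ as ratios of monic polynomials times constants and factoring over $\bar\GF_q$, both sides become products over pairs of roots $(\alpha, \beta)$ of (numerator and denominator factors of) $f$ and $g$ of terms of the shape $(\alpha - \beta)^{\pm 1}$, together with contributions at the place at infinity and from the leading constants. Bilinearity in $f$ and $g$ reduces us to the case where $\div{F_0}{f}$ and $\div{F_0}{g}$ each consist of a single zero and a single pole of degree one; in that case the identity $f(\div{F_0}{g}) = g(\div{F_0}{f})$ is an elementary computation with linear polynomials, where the symmetry $\alpha - \beta = -(\beta - \alpha)$ and careful bookkeeping of the place at infinity yield equality (the sign contributions cancel because the divisors have degree zero). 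This establishes the theorem for $F_0$.

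For general $F$, pick a separating element $x \in F$ so that $F|\GF_q(x)$ is finite separable, and put $F_0 = \GF_q(x)$. Given $f, g \in F^\times$ with $\div{F}{f}$ and $\div{F}{g}$ of disjoint support, the key observation is that evaluation is compatible with the norm: for a function $u \in F^\times$ and a divisor $\DD_0 \in \D(F_0)$ coprime to $u$ one has $u(\Con{F}{F_0}(\DD_0)) = (\Norm{F}{F_0}(u))(\DD_0)$, which follows from the transitivity of norms of residue field extensions and the fact that $\Norm{F}{F_0}$ on divisors is given place-by-place. Combining this with the projection-formula-type identity $\Norm{F}{F_0}(\div{F}{u}) = \div{F_0}{\Norm{F}{F_0}(u)}$ and the multiplicativity of evaluation, one reduces $f(\div{F}{g})$ and $g(\div{F}{f})$ to evaluations over $F_0$ after moving one of the two functions down by the norm; applying the already-proven rational case to $\Norm{F}{F_0}(f)$ and $\Norm{F}{F_0}(g)$ (after a small approximation-theorem adjustment to ensure disjoint supports downstairs, which does not change the values) gives the result.

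\textbf{Main obstacle.} The genuinely delicate point is the bookkeeping in the rational case: one must correctly account for the place at infinity of $\GF_q(x)$ and for the leading coefficients of the polynomials, since $\div{F_0}{f}$ always has degree zero and the naive product over the finite zeros and poles must be corrected by the value at infinity. Getting all signs and constant factors to cancel is where the argument needs care, though it is entirely elementary. The reduction step for general $F$ is comparatively routine once the compatibility of evaluation with the divisor norm and the function norm is set up cleanly; the only subtlety there is ensuring the supports remain disjoint after passing to $F_0$, which is handled by the approximation theorem as in Lemma~\ref{ext}.
\end{proof}
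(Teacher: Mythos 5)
The paper itself gives no proof of Weil reciprocity: it remarks that the result is elementary for rational function fields and follows in general by reduction to the rational case, and then refers to Lang's \emph{Elliptic Functions}, p.~243. Your strategy (prove the rational case, reduce the general case via the norm) is exactly the route the paper alludes to.

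However, your reduction step contains a genuine gap. You fix an \emph{arbitrary} separating element $x$, set $F_0 = \GF_q(x)$, and claim that applying the rational case to $\Norm{F}{F_0}(f)$ and $\Norm{F}{F_0}(g)$ recovers $f(\div{F}{g}) = g(\div{F}{f})$. It does not. The compatibility $u\bigl(\Con{F}{F_0}(\DD_0)\bigr) = \bigl(\Norm{F}{F_0}(u)\bigr)(\DD_0)$ is correct, but it converts $\Norm{F}{F_0}(f)\bigl(\div{F_0}{\Norm{F}{F_0}(g)}\bigr)$ into $f\bigl(\Con{F}{F_0}\!\circ\Norm{F}{F_0}(\div{F}{g})\bigr)$, and $\Con{F}{F_0}\!\circ\Norm{F}{F_0}$ is \emph{not} the identity on $\D(F)$; it replaces $\div{F}{g}$ by a different divisor (summing over all places above a given place of $F_0$, with multiplicities). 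So the rational-case identity applied to the two norms gives no information about the original pair $(f,g)$. The standard fix, which is the actual content of the reduction, is that $F_0$ must be chosen \emph{adapted to one of the two functions}, e.g.\ $F_0 = \GF_q(g)$ (using bilinearity and an approximation argument to reduce to the case where $g$ is a separating element and $g \notin \GF_q$). Then one has the two projection-formula identities $f(\div{F}{g}) = \Norm{F}{F_0}(f)(\div{F_0}{g})$ (because $\div{F}{g} = \Con{F}{F_0}(\div{F_0}{g})$ when $g \in F_0$) and $g(\div{F}{f}) = g\bigl(\div{F_0}{\Norm{F}{F_0}(f)}\bigr)$ (by pushing the divisor down via $\Norm{F}{F_0}$, which is compatible with evaluation precisely when the function lies in $F_0$), and Weil reciprocity in $F_0$ for the pair $(\Norm{F}{F_0}(f), g)$ then gives the claim. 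Without tying $F_0$ to $f$ or $g$, the argument does not close. (A minor remark on your rational case: once you factor into linear terms, the pieces $x-a$ and $x-b$ both have a pole at $\infty$, so their divisors are not disjoint and the evaluations are not literally defined; the clean formulation uses the tame symbol $(f,g)_\p = (-1)^{\ord_\p f \cdot \ord_\p g}\, \bigl(f^{\ord_\p g}/g^{\ord_\p f}\bigr)(\p)$ and the product formula $\prod_\p (f,g)_\p = 1$, which removes the disjointness hypothesis and makes the bookkeeping at $\infty$ transparent.)
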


\begin{proof}
 See \cite[p. 243]{main}{LangEF}.
\end{proof}

Combining Lemma~\ref{ext} and Theorem~\ref{thm::weilrec}
we obtain the main result of this section.

\begin{thm} \label{thm::artinkernel}
   Let $n$ be coprime to $q$, and suppose that $\ED|\FD$ is abelian of degree dividing $n$. 
   Then $\m$ is a modulus of $\ED|F$, i.e. $$\H_\m(F)
   \subseteq \ker \Frob{\ED}{F}.$$
\end{thm}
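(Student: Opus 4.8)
The plan is to reduce the abelian case to the cyclic case treated in Lemma~\ref{ext}, and then to verify the cyclic case by a direct computation using Weil reciprocity.

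First I would reduce to the cyclic case. Suppose $\ED|\FD$ is abelian of exponent dividing $n$. Write $\ED$ as a compositum $\ED = \ED_1 \cdots \ED_k$ where each $\ED_i|\FD$ is cyclic of order dividing $n$ (each $\ED_i$ is the fixed field of a suitable subgroup of $\Gal(\ED|\FD)$, so that the intersection of these subgroups is trivial). Each $\ED_i$ still contains $\FD$, so the hypotheses of Lemma~\ref{ext} apply to $\ED_i|F$; note also that $\ED_i|F$ is unramified outside $\m$ since $\ED|F$ is, and by enlarging $\m$ if necessary (allowed by Corollary~\ref{cor::artinfunktor}) we may assume $\m$ is a genuine divisor making all the relevant conorms well-defined. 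An element $\sigma \in \Gal(\ED|F)$ restricting to $\phiq$ on $\FD$ restricts to such an element $\sigma_i$ on each $\ED_i$. Since $\ker \Frob{\ED}{F} = \bigcap_i \Frob{\ED_i}{F}^{-1}(\text{something})$—more precisely, an automorphism of $\ED$ is trivial iff its restriction to every $\ED_i$ is trivial—it suffices to show $\H_\m(F) \subseteq \ker \Frob{\ED_i}{F}$ for every $i$. Hence we may assume $\ED|\FD$ is cyclic of order dividing $n$.

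Now assume $\ED|\FD$ is cyclic of order dividing $n$, and let $\DD = \div{F}{u} \in \H_\m(F)$ be a principal divisor with $u \in \Fx$ satisfying $u \equiv 1 \bmod \p^{\ord_\p \m}$ for all $\p$ in $\supp(\m)$; in particular $\div{F}{u}$ is coprime to $\m$, hence to the conorm $\Con{\FD}{F}(\DD)$ and to the function $h$ of Lemma~\ref{ext} after a harmless adjustment. By Lemma~\ref{ext} we have $\Frob{\ED}{F}(\DD) = \tau_\DD \circ \sigma^{\deg \DD}$, and $\deg \DD = 0$ since $\DD$ is principal, so $\Frob{\ED}{F}(\DD) = \tau_\DD$, which acts on the Kummer generator $y$ by $\tau_\DD(y) y^{-1} = h(\Con{\FD}{F}(\DD))$. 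Thus it suffices to show $h(\Con{\FD}{F}(\DD)) = 1$. The key point is that function evaluation is compatible with conorm, so $h(\Con{\FD}{F}(\div{F}{u})) = h(\div{\FD}{u})$; here $h \in F'^\times$ and $u \in \Fx \subseteq F'^\times$, and by construction $\div{\FD}{u}$ and $\div{\FD}{h}$ have disjoint support (as $\supp(h)$ avoids $\supp(\Con{\FD}{F}(\DD)) = \supp(\div{\FD}{u})$). Weil reciprocity (Theorem~\ref{thm::weilrec}) applied over $\FD$ then gives $h(\div{\FD}{u}) = u(\div{\FD}{h})$. But $u \in \Fx$ with $u \equiv 1 \bmod \p^{\ord_\p \m}$ for all $\p \in \supp(\m)$, and $\supp(\div{\FD}{h}) \subseteq \Con{\FD}{F}(\supp(\m))$ by the last support computation in the proof of Lemma~\ref{ext} (since $\supp(h) = \supp(\phiq^{-1}(f)^q f^{-1})$ and $\supp(f)$ sits inside the support of $\m$ after the approximation-theorem adjustment); hence $u$ evaluates to $1$ at each place in $\supp(\div{\FD}{h})$, because $u \equiv 1$ there to high enough order, giving $u(\div{\FD}{h}) = 1$. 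Therefore $h(\Con{\FD}{F}(\DD)) = 1$ and $\Frob{\ED}{F}(\DD) = \id$.

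The main obstacle I anticipate is bookkeeping the supports: one must be careful that the function $h$ produced by Lemma~\ref{ext} (which depended on an approximation-theorem choice tailored to a single $\DD$) can be arranged to have its support inside $\Con{\FD}{F}(\supp(\m))$, independently of $\DD$, so that the congruence $u \equiv 1 \bmod \p^{\ord_\p \m}$ is exactly what makes $u$ evaluate trivially there. This is really a matter of re-reading the construction of $h$ in the proof of Lemma~\ref{ext}: $\supp(h) = \supp(\phiq^{-1}(f)^q f^{-1})$ where $f = y^n \in F'_{n,\m}$, so $\ord_\p(f) \equiv 0 \bmod n$ off $\supp(\m)$; but since $f$ is an $n$-th power times something supported in $\supp(\m)$... one should verify $\supp(h^n) \subseteq \Con{\FD}{F}(\supp(\m))$, i.e. that $\phiq^{-1}(f)^q f^{-1}$ has trivial valuation at places lying over $\p \notin \supp(\m)$, which follows because $\ord_\q(\phiq^{-1}(f)^q f^{-1}) = q\,\ord_\q(f) - \ord_\q(f) = (q-1)\ord_\q(f)$ and we'd need this to vanish—so in fact one works with $h$ rather than $h^n$ directly, using that $h \in F'^\times$ and $\ord_\q(h)$ is an integer with $n \mid (q-1)\ord_\q(f) = n\,\ord_\q(h)\cdot(\text{unit})$; the cleanest route is simply to invoke that $\m$ can be enlarged to absorb $\supp(h)$ while keeping $\DD = \div{F}{u} \in \H_\m(F)$ with the required congruences (Corollary~\ref{cor::artinfunktor}). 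Once the supports line up, the Weil-reciprocity step and the triviality of $u(\div{\FD}{h})$ are immediate.
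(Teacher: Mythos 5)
Your reduction to the cyclic case and the Weil--reciprocity step $h(\Con{\FD}{F}(\DD)) = h(\div{\FD}{g}) = g(\div{\FD}{h})$ agree with the paper, and you are right that the whole proof hinges on showing $g(\div{\FD}{h}) = 1$. However, the support argument you propose for this final step has a genuine gap.

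Your claim that $\supp(\div{\FD}{h}) \subseteq \Con{\FD}{F}(\supp(\m))$ is false in general. From $h^n = \phiq^{-1}(f)^q f^{-1}$ with $f = y^n \in F'_{n,\m}$ you only know $\ord_\q(f) \equiv 0 \bmod n$ for $\q$ outside $\TTT := \supp(\Con{\FD}{F}(\m))$, which does \emph{not} put $\supp(f)$ (hence $\supp(h)$) inside $\TTT$: the divisor $\sum_{\q \notin \TTT} (\ord_\q(f)/n)\,\q$ need not be principal, so one cannot normalize $y$ to make $\supp(y^n) \subseteq \TTT$, and the approximation theorem used in Lemma~\ref{ext} only steers $\supp(f)$ away from the finite set $\supp(\Con{\FD}{F}(\DD))$, not into $\TTT$. (Your computation $\ord_\q(h^n) = (q-1)\ord_\q(f)$ is also off, since $\ord_\q(\phiq^{-1}(f)) = \ord_{\phiq(\q)}(f)$ and $\phiq$ permutes the places over $\q \cap F$ nontrivially.) The proposed remedy of enlarging $\m$ to $\m'$ to absorb $\supp(h)$ does not help: that only yields $\H_{\m'}(F) \subseteq \ker\Frob{\ED}{F}$, which by $\H_{\m'}(F) \subseteq \H_\m(F)$ is strictly weaker than the assertion about $\H_\m(F)$. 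Moreover $\supp(h)$ varies with $\DD$, so there is no single enlargement that works uniformly.

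What is missing is the decomposition the paper performs: write $\div{\FD}{f} = \A + n\B$ with $\supp(\A) \subseteq \TTT$ and $\B$ coprime to $\TTT$, whence $\div{\FD}{h} = \C + q\phiq^{-1}(\B) - \B$ with $\supp(\C) \subseteq \TTT$. The piece $\C$ is killed by $g \equiv 1 \bmod \p^{\ord_\p(\m)}$ exactly as you intend. But the piece $q\phiq^{-1}(\B) - \B$ lives \emph{outside} $\TTT$, where no congruence on $g$ is available; instead one uses that $g \in F^\times$ is fixed by $\phiq$ and that function evaluation commutes with automorphisms and with passage to constants, giving $g(q\phiq^{-1}(\B)) = g(\B)$ and hence $g(q\phiq^{-1}(\B) - \B) = 1$. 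This Frobenius--invariance argument for the ``outside-$\TTT$'' part of $\div{\FD}{h}$ is the essential idea your proposal lacks.
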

\nomenclature[ker]{$\ker$}{kernel of a map\nomnorefpage}

\begin{proof}
   Since $E'|F'$ is abelian there are intermediate
   fields $E_i'$ such that $E_i'|F'$ is cyclic and $E'$ is the
   compositum of the $E_i'$ over $F'$ and over $F$.  Then
   $\Frob{E'}{F}(\DD) = \id$ if and only if $\Frob{E'}{F}(\DD)|_{E_i'}
   = \Frob{E_i'}{F}(\DD) = \id$ for all $i$. It is thus sufficient to
   show $\H_\m(F) \subseteq \ker \Frob{E'}{F}$ under the assumption
   that $E'|F'$ is cyclic. Hence we can apply Lemma~\ref{ext}.

   Let $\DD \in \H_\m(F)$. Choose $\sigma$, $y$ and $h$ as in
   Lemma~\ref{ext}. Then $$\Frob{\ED}{F}(\DD) = \tau_{\DD} \circ
   \sigma^{\deg(\DD)}$$ with $\tau_{\DD}(y) = h(
   \Con{\FD}{F}( \DD )) \cdot y$. We wish to show that
   $\Frob{\ED}{F}(\DD) = \id$. As $\deg( \DD ) = 0$ and
   hence $\sigma^{\deg(\DD)} = \id$ it remains to be shown that $\tau_{\DD}
   = \id$ or equivalently $h( \Con{\FD}{F}( \DD )) = 1$.

   Since $\DD \in \H_\m(F)$ there is $g \in F^\times$ such that $\DD =
   \div{F}{g}$ and $g \equiv 1 \bmod \p^{\ord_\p(\m)}$ in $\O_\p$ for
   all places $\p$ of $F$. We have $\Con{\FD}{F}(\div{F}{g}) =
   \div{\FD}{g}$,
   therefore \begin{equation} \label{eqweilrec} h(\Con{\FD}{F}(\DD)) =
     h(\div{\FD}{g}) = g(\div{\FD}{h}) \end{equation} by Weil reciprocity
   Theorem~\ref{thm::weilrec}.

   Let $\SSS = \supp( \Con{\FD}{F}(\DD))$ and $\TTT = \supp(
   \Con{\FD}{F}(\m))$. Since $f = y^n \in \FD_{n,\m}$ there are $\B
   \in \D_{\m}(\FD)$ and $\A \in \D(\FD)$ with $\supp(\A) \subseteq
   \TTT$ such that
    \begin{equation} \label{fFD} \div{\FD}{f} = \A + n \B.  \end{equation}
   Then $\TTT \cap \supp(\B) = \emptyset$ and $\SSS \cap (\TTT \cup
   \supp(\B)) = \emptyset$, the latter by definition of $y$. We
   have \begin{equation} \label{fFD2} h^{n} = \sigma^{-1}(y^n)^q
     (y^n)^{-1} = \phiq^{-1}(f)^q f^{-1}. \end{equation} Taking
   principal divisors in \eqref{fFD2} and combining with \eqref{fFD}
   gives
   \begin{align*} n \div{\FD}{h} & 
    = q \phiq^{-1}( \div{\FD}{f} ) - \div{\FD}{f} \\ & = q
    \phiq^{-1}(\A) - \A + n \bigl( q \phiq^{-1}( \B ) - \B
    \bigr) \end{align*} with $q \phiq^{-1}(\A) - \A \in n \D(\FD)$
   because the other terms are in $n \D(\FD)$. Division
   by $n$ yields
   \begin{align*} \label{eqdivh} 
     \div{\FD}{h} = \C + q \phiq^{-1}( \B ) - \B 
   \end{align*} for
   $\C = ( q \phiq^{-1}(\A) - \A )/n \in \D(\FD)$. Since $\phiq( \TTT
   ) = \TTT$, $\supp(\A) \subseteq \TTT$, and $\TTT \cap \supp(\B) =
   \emptyset$ we have
   \begin{equation} \label{eqbinM} \supp( \C ) \subseteq \TTT, \end{equation}
   and $\C$ and $q \phiq^{-1}( \B ) - \B$ are coprime.  Furthermore,
   $\phiq(\SSS) = \SSS = \supp( \div{\FD}{g} )$ and $\SSS \cap (\TTT
   \cup \supp(\B)) = \emptyset$, so we also have that $\C$ and $q
   \phiq^{-1}( \B ) - \B$ are coprime with $g$.
   Then \begin{equation} \label{hg} g( \div{\FD}{h}) = g(\C) \cdot g( q
     \phiq^{-1}( \B) - \B).
   \end{equation}
    Since $g \in \Fx$ by construction, we have $\phiq(g) =
    g$. Together with $$g(q \phiq^{-1}( \B)) = g( \phiq^{-1}( \B))^q =
    ( \phiq^{-1}(g)( \phiq^{-1}( \B)))^q = \phiq^{-1}( g(\B) )^q =
    g(\B)$$ we get
  \begin{equation} \label{hg1} g(q \phiq^{-1}( \B ) - \B) = 1. 
    \end{equation}  By assumption, $g \equiv 1 \bmod \p^{\ord_\p(\m)}$ in 
   $\O_\p$ for all places $\p$ of $F$ and thus $g \equiv 1 \bmod
  \p^{\ord_\p(\Con{F'}{F}(\m))}$ in $\O_\p$ for all places $\p$ of
  $\FD$.  Hence $g(\p) = 1$ for all $\p \in \TTT$ and therefore,
  observing \eqref{eqbinM}, \begin{equation} \label{hg2} g(\C) =
    1. \end{equation}
   Combining \eqref{eqweilrec}, \eqref{hg}, \eqref{hg1} and
   \eqref{hg2} we obtain $$h(\Con{\FD}{F}(\DD)) = g(
   \div{\FD}{h}) = 1,$$ as was to be shown.
\end{proof}


In the rest of this paper the Artin map will only occur for extensions
$E'|F$ as in Theorem~\ref{thm::artinkernel}, whence $\H_\m(F)
\subseteq \ker \Frob{E'}{F}$. We will thus regard $\Frob{E'}{F}$ as a
well-defined epimorphism 
\begin{align} \label{artinaufklassen}
\Frob{E'}{F} : \Cl_\m(F) \rightarrow \Gal(E'|F).
\end{align}
\nomenclature[AEFZ]{$\Frob{E}{F}$}{Artin map of $E"|F$ defined on $\Cl_\m(F)$, see eq.\ \eqref{artinaufklassen}}

\section{Evaluation of Functions and Pairings} \label{sec::evaluation}

We now consider pairings derived from the evaluation of functions at
divisors and various symmetries between these pairings.  This yields a
key tool in proving Theorem~\ref{thm::kummer2} on the kernel of the
Artin map in the next section.

Suppose that $\mu_n \subseteq F$ and so $q \equiv 1 \bmod n$. It will
be sufficient to work modulo $n$-th powers or $n$-th multiples in
every group that occurs.
If $f \in \Fx$ and $\ord_\p(f) \equiv 0 \bmod n$, we define the
residue $f_{n,\p}$ of $f$ at $\p$ modulo $n$-th powers as follows: By
the approximation theorem there is $g \in \Fx$ such that $\ord_\p(g) =
\ord_\p(f) / n$. Then $fg^{-n} \in \Op^\times$ and we set 
\begin{align}\label{fnp}
f_{n,\p} = (fg^{-n})_\p \cdot \Fpxn \in \Fpx / \Fpxn.
\end{align}\nomenclature[fpnp]{$f_{n,\p}$}{see eq.\ (\ref{fnp})}%
One can check directly
that $f_{n,\p}$ does not depend on the choice of $g$. If $\ord_\p(f)
\not\equiv 0 \bmod n$ we set~$f_{n,\p} = 1$ in $\Fpx /
\Fpxn$. Furthermore, for each $\p$ we have an isomorphism
\begin{align}\label{phinp}
  \phi_{n,\p} : \Fpx/\Fpxn & \rightarrow \mu_n, \quad
                x \cdot \Fpxn \mapsto \Norm{\Fp}{\Fq}(x)^{(q-1)/n}.
\end{align}\nomenclature[pxphinp]{$\phi_{n,\p}$}{see eq.\ (\ref{phinp})}%
Let $\SSS$ be an arbitrary set of places of $F$. Combining all this we define 
\begin{align}\label{evns}
   \ev_{n,\SSS} : F_{n,\SSS} &
  \rightarrow \prod_{\p \not\in \SSS} \mu_n, \quad f  \mapsto (\,
   \phi_{n,\p}( f_{n,\p} )  \,)_{\p \not\in \SSS}.%
\nomenclature[evns]{$\ev_{n,\SSS}$}{see eq.\ (\ref{evns})}%
\end{align}
Next we define a divisor group 
\begin{align}\label{dnsf} 
   \D_{n,\SSS}(F) & 
   = \{ \DD \in \D(F) \,|\, \ord_\p(\DD) \equiv 0 \bmod n \text{ for all }
   \p \in \SSS \}.
\end{align}\nomenclature[dnsf]{$\D_{n,\SSS}(F)$}{see eq.\ (\ref{dnsf})}%
There is an epimorphism 
\begin{align}\label{ordns}
  \ord_{n,\SSS} : \D_{n,\SSS}(F) \rightarrow \coprod_{\p \not\in \SSS} \Z/n\Z, \quad 
                \DD \mapsto ( \, \ord_{\p}(\DD) + n\Z \, )_{\p \not\in \SSS}.
\end{align}\nomenclature[ordns]{$\ord_{n,\SSS}$}{see eq.\ (\ref{ordns})}%
Since $\mu_n \cong \Z/n\Z$, the groups $\prod_{\p \not\in \SSS} \mu_n$ and
$\coprod_{\p \not\in \SSS} \Z/n\Z$ are dual under the non-degenerate
pairing \begin{equation} \label{eqtau} \tau : \prod_{\p \not\in \SSS} \mu_n \times \coprod_{\p \not\in \SSS}
\Z/n\Z \rightarrow \mu_n, \quad ( (x_\p)_{\p \not\in \SSS}, (y_\p + n\Z)_{\p
  \not\in \SSS} ) \mapsto \prod_{\p \not\in \SSS} x_\p^{y_\p}.\end{equation}
\nomenclature[tau]{$\tau$}{see eq.\ \eqref{eqtau}}%
Pulling back with $\ev_{n,\SSS}$ and $\ord_{n,\SSS}$ gives a pairing 
 \begin{align}\label{tauns}
   \tau_{n,\SSS} : F_{n,\SSS} \times \D_{n,\SSS}(F) \rightarrow \mu_n, \quad (f,
   \DD) \mapsto \tau( \ev_{n,\SSS}(f), \ord_{n,\SSS}(\DD) ).
\end{align}\nomenclature[tauns]{$\tau_{n,\SSS}$}{see eq.\ (\ref{tauns})}%
Note that $$\tau_{n, \SSS}( f, \DD) = f(\DD)^{(q-1)/n}$$ for all 
$f \in F_{n, \SSS}$ and $\DD \in \D_{n,\SSS}(F)$ with $f$ and $\DD$ coprime.

Let $\bar{\SSS}$\nomenclature[Sbar]{$\bar{\SSS}$}{complement of $\SSS$ in the set of all places of $F$} denote the complement of $\SSS$ in the set of all places of
$F$. Let ${\tau_{n, \SSS}}^\op$ denote $\tau_{n, \SSS}$ with the arguments
swapped, that is 
\begin{equation}\label{opp}
{\tau_{n, \SSS}}^\op(x, y) = \tau_{n, \SSS}(y, x).
\end{equation}\nomenclature[taunsop]{${\tau_{n, \SSS}}^\op$}{the pairing $\tau_{n,\SSS}$ with arguments swapped, see eq.\ (\ref{opp})}%

\begin{thm} \label{theoremadjoint1}
    Let $\SSS$ be an arbitrary set of places.
  \begin{itemize}
    \item[$(i)$]
   In each square of the diagram 
    $$\xymatrixcolsep{2.5em}\xymatrix{ F_{n,\emptyset}
     \ar[r]^-{\subseteq} \ar@{-}[d]^{\tau_{n,\emptyset}} & F_{n,\SSS}
     \ar[r]^-{\ddiv_F} \ar@{-}[d]^{\tau_{n,\SSS}} & \D_{n,\bar{\SSS}}(F)
     \ar[r]^-{\subseteq} \ar@{-}[d]^{{\tau_{n,\bar{\SSS}}}^\op} &
     \D_{n,\emptyset}(F) \ar@{-}[d]^{{\tau_{n,\emptyset}}^\op}
     \\ \D_{n,\emptyset}(F) & \D_{n,\SSS}(F) \ar[l]_-{\supseteq} &
     F_{n,\bar{\SSS}} \ar[l]_-{\ddiv_F} & F_{n,\emptyset}
     \ar[l]_-{\supseteq} }$$  the horizontal maps  
      are adjoint with respect to the pairings
   on the left and right vertical lines.
  \item[$(ii)$] The left kernel of $\tau_{n,\SSS}$ contains $F_{n,\SSS}^1
    \Fxn$ where 
    \begin{equation}\label{fns1}
    F_{n,\SSS}^1 = \{ f \in F_{n,\SSS} \,|\, \ord_\p(f) = 0
    \text{ and } f_\p = 1 \text{ for all } \p \not\in \SSS \}.
    \end{equation}\nomenclature[fns1]{$F_{n,\SSS}^1$}{see eq.\ (\ref{fns1})}%
    The
    right kernel of $\tau_{n,\SSS}$ contains
    ${\H^1}_{\hspace*{-0.5em}n,\bar{\SSS}} (F) + n \DF$
    where 
    \begin{align}\label{pns1}
    {\H^1}_{\hspace*{-0.5em}n,\bar{\SSS}} (F) & = 
    \div{F}{F_{n,\bar{\SSS}}^1} \\ \notag & = \{ \ddiv_F(f) \,|\, f \in
    F_{n,\bar{\SSS}}, \ord_\p(f) = 0 \text{ and } f_\p = 1
    \text{ for all } \p \in \SSS \}.
    \end{align}\nomenclature[pns1]{${\H^1}_{\hspace*{-0.5em}n,\bar{\SSS}} (F)$}{see eq.\ (\ref{pns1})}%
  \end{itemize}
\end{thm}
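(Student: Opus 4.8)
For part $(i)$, the strategy is to verify the adjointness identity $\tau_i(x,\psi_i(y)) = \tau_{i+1}(\phi_i(x),y)$ separately in each of the three squares, reducing everything via the formula $\tau_{n,\SSS}(f,\DD) = f(\DD)^{(q-1)/n}$ (valid when $f$ and $\DD$ are coprime) to an identity about function evaluation. The leftmost square involves the inclusion $F_{n,\emptyset} \hookrightarrow F_{n,\SSS}$ on top and the inclusion $\D_{n,\SSS}(F) \hookleftarrow \D_{n,\emptyset}(F)$ on the bottom; here adjointness says $\tau_{n,\emptyset}(f,\DD) = \tau_{n,\SSS}(f,\DD)$ for $f \in F_{n,\emptyset}$, $\DD \in \D_{n,\emptyset}(F)$, which is immediate from the definitions since the same product formula is being restricted. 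The rightmost square is the mirror image under swapping arguments, so it follows by the same reasoning applied to ${\tau_{n,\bar\SSS}}^\op$ and ${\tau_{n,\emptyset}}^\op$.

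The middle square is the substantive one: the top map is $\ddiv_F : F_{n,\SSS} \to \D_{n,\bar\SSS}(F)$ and the bottom map is $\ddiv_F : F_{n,\bar\SSS} \to \D_{n,\SSS}(F)$, and adjointness reads $\tau_{n,\SSS}(f, \ddiv_F(g)) = {\tau_{n,\bar\SSS}}^\op(\ddiv_F(f), g) = \tau_{n,\bar\SSS}(g, \ddiv_F(f))$ for $f \in F_{n,\SSS}$, $g \in F_{n,\bar\SSS}$. Unwinding the pairing formula, both sides equal $f(\ddiv_F(g))^{(q-1)/n}$ and $g(\ddiv_F(f))^{(q-1)/n}$ respectively once one has arranged $f$ and $g$ to have coprime divisors — which can be done by multiplying $f$ or $g$ by suitable $n$-th powers, using the approximation theorem and the fact that the pairings are unchanged modulo $\Fxn$ — so the identity is exactly Weil reciprocity (Theorem~\ref{thm::weilrec}). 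The one subtlety, and I expect this to be the main obstacle, is bookkeeping the supports: I must check that $\ddiv_F(f)$ for $f \in F_{n,\SSS}$ genuinely lands in $\D_{n,\bar\SSS}(F)$ (i.e.\ its orders at places in $\bar\SSS$ — wait, at places \emph{not} in $\bar\SSS$, i.e.\ in $\SSS$ — are divisible by $n$), that the pairings are well-defined on the stated domains, and that after the coprimality adjustment the residue/norm factors $\phi_{n,\p}$ at the places that were modified contribute trivially, so that the clean evaluation formula applies. None of this is deep, but it requires care to state cleanly.

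For part $(ii)$, I would argue directly from the definition of the induced maps $\ev_{n,\SSS}$ and $\ord_{n,\SSS}$. If $f \in F_{n,\SSS}^1$, then $\ord_\p(f) = 0$ and $f_\p = 1$ for all $\p \notin \SSS$, so $f_{n,\p}$ is trivial in $\Fpx/\Fpxn$ for every $\p \notin \SSS$, hence $\ev_{n,\SSS}(f)$ is the identity element of $\prod_{\p \notin \SSS}\mu_n$; therefore $\tau_{n,\SSS}(f,-) = \tau(\ev_{n,\SSS}(f), -)$ is trivial and $f$ lies in the left kernel. Since $\Fxn \subseteq F_{n,\SSS}$ obviously maps to the identity under $\ev_{n,\SSS}$ as well (each local component is an $n$-th power, hence trivial mod $\Fpxn$), the whole subgroup $F_{n,\SSS}^1 \Fxn$ is contained in the left kernel. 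The right-kernel statement is the symmetric assertion: for $\DD = \ddiv_F(f)$ with $f \in F_{n,\bar\SSS}^1$ one has $\ord_\p(\DD) = 0$ for all $\p \notin \SSS$, so $\ord_{n,\SSS}(\DD) = 0$; and any element of $n\,\DF$ has all orders divisible by $n$, so it too maps to $0$ under $\ord_{n,\SSS}$. Hence ${\H^1}_{\hspace*{-0.5em}n,\bar\SSS}(F) + n\,\DF$ is contained in $\ker(\tau_{n,\SSS})_\myright$. The only point needing a line of justification is that these divisors actually lie in the domain $\D_{n,\SSS}(F)$ of the right argument — which holds because for $f \in F_{n,\bar\SSS}$ the orders at places \emph{in} $\SSS$ are $\equiv 0 \bmod n$ by the defining condition of $F_{n,\bar\SSS}$, and $n\,\DF \subseteq \D_{n,\SSS}(F)$ trivially.
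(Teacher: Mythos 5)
Your overall plan follows the paper's proof: the middle square is Weil reciprocity after arranging coprimality via $n$-th power adjustments, and part~$(ii)$ follows by inspection of $\ev_{n,\SSS}$ and $\ord_{n,\SSS}$. However your treatment of the outer squares has a real slip. You describe the bottom map as an inclusion ``$\D_{n,\SSS}(F) \hookleftarrow \D_{n,\emptyset}(F)$'', but the containment runs the other way, $\D_{n,\SSS}(F) \subseteq \D_{n,\emptyset}(F)$, so the map is the inclusion of $\D_{n,\SSS}(F)$ \emph{into} $\D_{n,\emptyset}(F)$. Consequently the adjoint condition for the first square reads $\tau_{n,\emptyset}(f,\DD) = \tau_{n,\SSS}(f,\DD)$ with $\DD$ ranging over the \emph{smaller} group $\D_{n,\SSS}(F)$, not over $\D_{n,\emptyset}(F)$; for a general $\DD \in \D_{n,\emptyset}(F)$ the right-hand side $\tau_{n,\SSS}(f,\DD)$ is not even defined. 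More substantively, this identity is not ``immediate from the definitions'': $\tau_{n,\emptyset}$ multiplies the local factors $\phi_{n,\p}(f_{n,\p})^{\ord_\p(\DD)}$ over \emph{all} places while $\tau_{n,\SSS}$ omits those with $\p \in \SSS$, and dropping the omitted factors requires the observation that each $\phi_{n,\p}(f_{n,\p})$ lies in $\mu_n$ while $\ord_\p(\DD) \equiv 0 \bmod n$ for $\p \in \SSS$ by the defining condition of $\D_{n,\SSS}(F)$. That observation is precisely the content of the first (and, mirrored, the third) square, and the paper carries it out explicitly. Apart from this, your middle-square argument matches the paper's, and your part~$(ii)$ is correct and in fact slightly more direct: the paper derives the containment of ${\H^1}_{\hspace*{-0.5em}n,\bar{\SSS}}(F)$ in the right kernel from the adjointness of $\ddiv_F$ just established, whereas you verify directly that $\ord_{n,\SSS}$ annihilates those divisors, which is equally valid.
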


\begin{proof}
   $(i)$: Consider the first row. It is easy to check that the domains
  and codomains indeed fit together to give a sequence of
  homomorphisms. If $\SSS$ is replaced by $\bar{\SSS}$ then this also holds
  for the second row by symmetry. 
 
  Suppose $f \in F_{n,\emptyset}$ and $\DD \in \D_{n,\SSS}(F)$. Then,
  observing the definitions and $\ord_\p(\DD) \equiv 0 \bmod n$ for all
  $\p \in \SSS$,
  \begin{align*}  \tau_{n,\emptyset}( f, \DD ) & = \tau( \ev_{n,\emptyset}(f), 
   \ord_{n,\emptyset}( \DD ) ) = \prod_{\p \not\in \emptyset}
   \phi_{n,\p}(f_{n,\p})^{\ord_\p(\DD)} \\ & = \prod_{\p \not\in \SSS}
   \phi_{n,\p}(f_{n,\p})^{\ord_\p(\DD)} = \tau( \ev_{n,\SSS}(f), \ord_{n,\SSS}(
   \DD ) ) = \tau_{n,\SSS}( f, \DD),
  \end{align*}  showing the adjointness in the first
   square. 

   Suppose $f \in F_{n,\SSS}$ and $g \in F_{n,\bar{\SSS}}$. From the
   definitions we have $\ker( \ev_{n,\SSS} ) \supseteq \Fxn$ and
   $\ker( \ord_{n,\SSS} ) \supseteq n \DF$, so the left and right
   kernel of $\tau_{n,\SSS}$ contain $\Fxn$ and $n \DF$ respectively.
   There are $f' \in F_{n,\SSS}$ and $g' \in F_{n, \bar{\SSS}}$ with
   $f ' f^{-1}, g' g^{-1} \in \Fxn$ such that $f'$ and $g'$ are
   coprime. Indeed, let $\p \in \supp(f) \cap \supp(g)$. Then
   $\ord_\p(f) \equiv 0 \bmod n$ or $\ord_\p(g) \equiv 0 \bmod
   n$. Assume $\ord_\p(f) \equiv 0 \bmod n$. By the approximation
   theorem there is $h \in \Fx$ such that $\ord_\p(h) = 1$ and
   $\ord_\q(h) = 0$ for all other $\q \in \supp(f) \cup \supp(g)$.
   Then $f(h^{-\ord_\p(f)/n})^n$ differs from $f$ by an $n$-th power,
   is coprime to $g$ in $\p$ and the valuations at all other places
   $\q$ are unaffected.  Continuing this for all $\p \in \supp(f) \cap
   \supp(g)$ leads to $f'$ and $g'$ as desired. Then
  \begin{align*}  \tau_{n,\SSS}( f, \div{F}{g}) \; & = \; 
    \tau_{n,\SSS}( f', \div{F}{g'}) = 
    \\ & = \; \prod_{\p \not\in \SSS}
    \phi_{n,\p}(f'_{n,\p})^{\ord_\p(g')} = \prod_{\p \not\in
      \emptyset} \phi_{n,\p}(f'_{n,\p})^{\ord_\p(g')} = f' (
    \div{F}{g'} ) \\ & \overset{(*)}{=} \; g' ( \div{F}{f'} ) =
    \prod_{\p \not\in \emptyset} \phi_{n,\p}(g'_{n,\p})^{\ord_\p(f')}
    = \prod_{\p \not\in \bar{\SSS}}
    \phi_{n,\p}(g'_{n,\p})^{\ord_\p(f')} \\ & = \;
    \tau_{n,\bar{\SSS}}( g', \div{F}{f'}) = \tau_{n,\bar{\SSS}}( g,
    \div{F}{f}).
  \end{align*} Here equation $(*)$ holds by Weil reciprocity 
    Theorem~\ref{thm::weilrec}. This shows the adjointness in the
    second square.

   The adjointness in the third square follows from the adjointness
   in the first square by symmetry, if $\SSS$ is replaced by $\bar{\SSS}$.

   $(ii)$: We have already observed that the left kernel of
   $\tau_{n,\SSS}$ contains $\Fxn$. It is directly clear from the
   definitions that $F_{n,\SSS}^1$ is contained in $\ker( \ev_{n,\SSS} )$,
   whence also in the left kernel of $\tau_{n,\SSS}$. Since $\tau_{n,\SSS}$
   is homomorphic in the first argument this proves the first
   assertion. 

   We have already observed that the right kernel of $\tau_{n,\SSS}$
   contains $n \DF$. By the adjointness of $\ddiv_F$ the image
   ${\H^1}_{\hspace*{-0.5em}n,\bar{\SSS}}(F)$ of $F_{n,\bar{\SSS}}^1$ is
   also contained the the right kernel of $\tau_{n,\SSS}$. Since
   $\tau_{n,\SSS}$ is homomorphic in the second argument this proves the
   second assertion.
\end{proof}

Abbreviate \begin{align} 
   \overline{F}_{n,\SSS} & = \dfrac{F_{n,\SSS}}{F_{n,\SSS}^1 \Fxn}\label{fnsbar}\\
  \overline{\D}_{n,\SSS}(F) & = \dfrac{\D_{n,\SSS}(F)}{{\H^1}_{\hspace*{-0.5em}n,\bar{\SSS}}(F) + n \DF}.\label{dnsfbar}
\end{align}%
\nomenclature[fnsbar]{$\overline{F}_{n,\SSS}$}{see eq.\ (\ref{fnsbar})}%
\nomenclature[dnsfbar]{$\overline{\D}_{n,\SSS}(F)$}{see eq.\ (\ref{dnsfbar})}%
Since $F_{n,\SSS}^1 \Fxn$ and
$\H_{n,\bar{\SSS}}^1(F) + n \DF$ are contained in the left and right
kernel of $\tau_{n,\SSS}$ respectively, $\tau_{n, \SSS}$ induces a
pairing 
\begin{equation}\label{taunsbar}
\overline{\tau}_{n,\SSS} : \overline{F}_{n,\SSS} \times \overline{\D}_{n,\SSS}(F) \rightarrow \mu_n.%
\nomenclature[tauxnsbar]{$\overline{\tau}_{n,\SSS}$}{see eq.\ (\ref{taunsbar})}%
\end{equation}
In Section~\ref{sec::clfwithrootunity} it will be the central step to
prove that $\overline{\tau}_{n,\SSS}$ is non-degenerate for any finite
$\SSS$ subject to the assumption $\mu_n \subseteq F$. The following
nicely symmetric theorem shows that we can reduce the general case of
finite $\SSS$ to $\SSS = \emptyset$. We write again
\begin{equation}\label{oppol}
{\overline{\tau}_{n, \SSS}}^\op(x, y) = \overline{\tau}_{n, \SSS}(y, x).
\end{equation}\nomenclature[tauxnsop]{${\overline{\tau}_{n, \SSS}}^\op$}{the pairing $\overline{\tau}_{n,\SSS}$ with arguments swapped, see eq.\ (\ref{oppol})}%
\begin{thm} \label{theoremadjoint2}
    Let $\SSS$ be an arbitrary set of places.
  \begin{itemize}
   \item[$(i)$] The diagram from Theorem~\ref{theoremadjoint1}
     induces the diagram
    $$\xymatrixcolsep{2.5em}\xymatrix{ \overline{F}_{n,\emptyset}
       \ar[r] \ar@{-}[d]^{\overline{\tau}_{n,\emptyset}} &
       \overline{F}_{n,\SSS} \ar[r] \ar@{-}[d]^{\overline{\tau}_{n,\SSS}} &
       \overline{\D}_{n,\bar{\SSS}}(F) \ar[r]
       \ar@{-}[d]^{{\overline{\tau}_{n,\bar{\SSS}}}^\op} &
       \overline{\D}_{n,\emptyset}(F)
       \ar@{-}[d]^{{\overline{\tau}_{n,\emptyset}}^\op}
       \\ \overline{\D}_{n,\emptyset}(F)  & \overline{\D}_{n,\SSS}(F)
       \ar[l] & \overline{F}_{n,\bar{\SSS}} \ar[l] &
       \overline{F}_{n,\emptyset} \ar[l] }$$ which has exact rows, and
     in each square of the diagram the horizontal maps are adjoint
     with respect to the pairings on the left and right vertical
     lines.
  \end{itemize}
  Suppose $\SSS$ is finite.
  \begin{itemize}
   \item[$(ii)$] The groups $\overline{D}_{n, \bar{\SSS}}(F)$ and
     $\overline{F}_{n, \bar{\SSS}}$ are finite and
     ${\overline{\tau}_{n, \bar{\SSS}}}^\op$ is non-degenerate.
   \item[$(iii)$] If $\overline{F}_{n, \emptyset}$ and
     $\overline{\D}_{n,\emptyset}(F)$ are finite and
     $\overline{\tau}_{n,\emptyset}$ is non-degenerate then
     $\overline{\tau}_{n,\SSS}$ is non-degenerate.
   \end{itemize}
\end{thm}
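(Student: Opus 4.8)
The plan is to treat the three parts in order, since each builds on the previous one. For part $(i)$, the work is essentially bookkeeping: the diagram in Theorem~\ref{theoremadjoint1} has adjoint horizontal maps with respect to the pairings $\tau_{n,\SSS}$ etc., and the subgroups $F_{n,\SSS}^1\Fxn$ and $\H^1_{\hspace*{-0.5em}n,\bar{\SSS}}(F)+n\DF$ that we quotient by are precisely (contained in) the one-sided kernels by part $(ii)$ of that theorem. So each horizontal map descends to the quotients, and the adjointness relation $\tau_i(x,\psi_i(y))=\tau_{i+1}(\phi_i(x),y)$ passes to $\overline{\tau}_i$ verbatim because it is an identity of elements of $\mu_n$ that only depends on the classes. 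The one genuinely new claim in $(i)$ is exactness of the two rows. For this I would identify the first row with the sequence
$$1 \to \overline{F}_{n,\emptyset} \to \overline{F}_{n,\SSS} \xrightarrow{\ddiv_F} \overline{\D}_{n,\bar{\SSS}}(F) \to \overline{\D}_{n,\emptyset}(F) \to 1$$
and check exactness at each of the four spots directly from the definitions \eqref{fnsbar}, \eqref{dnsfbar}: injectivity on the left and surjectivity on the right are straightforward, and the middle exactness amounts to saying that an element of $F_{n,\SSS}$ whose divisor lies in $\H^1_{\hspace*{-0.5em}n,\SSS}(F)+n\DF$ differs modulo $F_{n,\SSS}^1\Fxn$ from something in $F_{n,\emptyset}$ — an approximation-theorem argument in the spirit of the proof of Theorem~\ref{theoremadjoint1}$(i)$. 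The second row is the first row with $\SSS$ replaced by $\bar{\SSS}$, up to the $\op$-flip, so it needs no separate argument.

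For part $(ii)$, finiteness of $\overline{F}_{n,\bar{\SSS}}$ and $\overline{\D}_{n,\bar{\SSS}}(F)$ when $\SSS$ is finite: for $\overline{\D}_{n,\bar{\SSS}}(F)=\D_{n,\bar{\SSS}}(F)/(\H^1_{\hspace*{-0.5em}n,\SSS}(F)+n\DF)$, note $\D_{n,\bar{\SSS}}(F)$ consists of divisors whose coefficients at the finitely many places in $\SSS$ are multiples of $n$; modulo $n\DF$ and then modulo the principal divisors of functions that are $\equiv 1$ at places of $\SSS$, this is controlled by $\Cl(F)/n\Cl(F)$ (finite) together with the finite local contribution at the places of $\SSS$, so it is finite. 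Dually (using the nondegenerate perfect pairing and part $(i)$'s adjointness, or directly) $\overline{F}_{n,\bar{\SSS}}$ is finite. Non-degeneracy of ${\overline{\tau}_{n,\bar{\SSS}}}^\op$ is then equivalent, by Lemma~\ref{lemmapairingcrit1}, to one-sided non-degeneracy plus equality of cardinalities; the one-sided non-degeneracy is immediate once one checks that the kernels containing $F^1_{n,\bar{\SSS}}\Fxn$ and $\H^1_{\hspace*{-0.5em}n,\SSS}(F)+n\DF$ are exactly those subgroups — which is where Weil reciprocity and the explicit formula $\tau_{n,\SSS}(f,\DD)=f(\DD)^{(q-1)/n}$ do the real work, by a place-by-place argument showing that if $f(\DD)^{(q-1)/n}=1$ for all admissible $\DD$ then $f_{n,\p}$ is trivial for every $\p\notin\bar{\SSS}$.

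For part $(iii)$ I would invoke Lemma~\ref{lemmapairingcrit2} applied to the diagram from part $(i)$. The five pairings along the two rows are $\overline{\tau}_{n,\emptyset}$, $\overline{\tau}_{n,\SSS}$, ${\overline{\tau}_{n,\bar{\SSS}}}^\op$, ${\overline{\tau}_{n,\emptyset}}^\op$, and we need four of them (positions $1,2,4,5$ — wait, there are only four pairings on four vertical lines, so the indexing must match Lemma~\ref{lemmapairingcrit2}'s five-term setup by adjoining trivial pairings $0\times 0\to\Q/\Z$ at both ends to pad the exact sequences to length five). With that padding, $\tau_1$ and $\tau_5$ are trivially non-degenerate, $\tau_4={\overline{\tau}_{n,\emptyset}}^\op$ is non-degenerate by hypothesis (the op-flip of a non-degenerate pairing is non-degenerate), and $\tau_2$ or whichever slot holds ${\overline{\tau}_{n,\bar{\SSS}}}^\op$ is non-degenerate by part $(ii)$ since $\SSS$ finite implies $\bar{\SSS}$ cofinite; so the remaining pairing $\overline{\tau}_{n,\SSS}$ is non-degenerate by the five-lemma argument of Lemma~\ref{lemmapairingcrit2}. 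The main obstacle I anticipate is getting the indexing and the padding in Lemma~\ref{lemmapairingcrit2} to line up correctly with the four-column diagram, and — underneath part $(ii)$ — the careful place-by-place verification that the \emph{exact} left and right kernels of $\overline{\tau}_{n,\bar{\SSS}}$ are the stated subgroups rather than something larger; everything else is formal diagram chasing and standard approximation-theorem manipulations.
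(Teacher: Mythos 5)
Your overall strategy for part $(iii)$ — padding the diagram and invoking Lemma~\ref{lemmapairingcrit2} — is the right one, and your reading of part $(i)$ as a bookkeeping exercise plus two exactness checks is also correct in spirit. However, there are genuine errors in how you fill in the details, which I describe below.

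The most serious problem is your claim, in part $(i)$, that the first row extends to an exact sequence $1 \to \overline{F}_{n,\emptyset} \to \overline{F}_{n,\SSS} \to \overline{\D}_{n,\bar{\SSS}}(F) \to \overline{\D}_{n,\emptyset}(F) \to 1$ for arbitrary $\SSS$, with ``injectivity on the left and surjectivity on the right straightforward.'' Neither endpoint claim is true. Take $\SSS$ to be the set of all places: then $F_{n,\SSS}^1 = F_{n,\SSS}$ so $\overline{F}_{n,\SSS} = 0$, while $\overline{F}_{n,\emptyset} = F_{n,\emptyset}/(F^\times)^n$ is typically nonzero, and injectivity fails. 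Dually, take $\SSS = \emptyset$: then $\D_{n,\bar{\SSS}}(F) = n\D(F)$ and $\overline{\D}_{n,\bar{\SSS}}(F) = 0$, while $\overline{\D}_{n,\emptyset}(F) = \Cl(F)/n\Cl(F)$, so surjectivity fails. Even for finite nonempty $\SSS$, surjectivity of $\overline{\D}_{n,\bar{\SSS}}(F) \to \overline{\D}_{n,\emptyset}(F)$ requires the classes of places in $\SSS$ to generate $\Cl(F)/n\Cl(F)$, which is generally false. Part $(i)$ asserts exactness only at the two interior terms $\overline{F}_{n,\SSS}$ and $\overline{\D}_{n,\bar{\SSS}}(F)$, which is what the paper proves; nothing is claimed about the endpoints, and rightly so. In particular this means that in part $(iii)$ you cannot pad both ends as you propose: the right-end padding would need precisely the surjectivity that fails. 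The paper pads only on the left, and the resulting $5$-column diagram is what Lemma~\ref{lemmapairingcrit2} applies to.

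For the left padding to be legitimate, you must prove two additional facts that you currently dismiss as straightforward parts of $(i)$ but which in fact use $\SSS$ finite: that $\overline{F}_{n,\emptyset} \to \overline{F}_{n,\SSS}$ is injective (which reduces to $F_{n,\SSS}^1 = 1$, a function with trivial divisor outside a finite $\SSS$ and residue $1$ at infinitely many places is constant $1$), and that $\overline{\D}_{n,\SSS}(F) \to \overline{\D}_{n,\emptyset}(F)$ is surjective (by approximation, every divisor class has a representative coprime to the finite set $\SSS$, hence lying in $\D_{n,\SSS}(F)$). Without these, the padded rows are not exact where Lemma~\ref{lemmapairingcrit2} needs them to be.

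On part $(ii)$, your route via Lemma~\ref{lemmapairingcrit1}, Weil reciprocity, and a place-by-place analysis is not wrong, but it is much more roundabout than necessary, and the mention of Weil reciprocity is a red herring (it was already consumed in Theorem~\ref{theoremadjoint1}). The efficient argument is direct: for $\SSS$ finite, $\ev_{n,\bar{\SSS}} : F_{n,\bar{\SSS}} \to \prod_{\p \in \SSS}\mu_n$ is surjective by approximation with kernel exactly $F_{n,\bar{\SSS}}^1(F^\times)^n$, and $\ord_{n,\bar{\SSS}} : \D_{n,\bar{\SSS}}(F) \to \coprod_{\p \in \SSS}\Z/n\Z$ is surjective with kernel exactly $n\D(F) = \H_{n,\SSS}^1(F) + n\D(F)$ (since $\H_{n,\SSS}^1(F) = 0$ for finite $\SSS$). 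So both quotients are literally identified with $\prod_{\p \in \SSS}\mu_n$ and $\coprod_{\p \in \SSS}\Z/n\Z$, under which $\overline{\tau}_{n,\bar{\SSS}}$ becomes the standard pairing $\tau$ of (\ref{eqtau}), which is non-degenerate by inspection. No cardinality counting and no reciprocity are needed.
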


\begin{proof}
   $(i)$: We have $F_{n,\emptyset}^1 \Fxn \subseteq F_{n,\SSS}^1 \Fxn$,
  $\div{F}{ F_{n,\SSS}^1 \Fxn } \subseteq \H_{n,\SSS}^1(F) + n \DF$ and
  $\H_{n,\SSS}^1(F) + n \DF \subseteq
  {\H^1}_{\hspace*{-0.5em}n,\bar{\emptyset}}(F) + n \DF$, so the first
  row in the diagram of Theorem~\ref{theoremadjoint1} indeed induces a
  sequence of homomorphisms
   $$\overline{F}_{n,\emptyset} \rightarrow \overline{F}_{n,\SSS}
  \rightarrow \overline{\D}_{n,\bar{\SSS}}(F) \rightarrow
  \overline{\D}_{n,\emptyset}(F).$$ 

  To prove exactness at $\overline{F}_{n,\SSS}$ let $f \in
  F_{n,\emptyset}$. Then $f \in F_{n,\SSS}$ and $\div{F}{f} \in n \DF
  \subseteq \H_{n,\SSS}^1(F) + n \DF$. Conversely, let $f \in
  F_{n,\SSS}$ with $\div{F}{f} \in \H_{n,\SSS}^1(F) + n \DF$. Then
  there is $h \in F_{n,\SSS}^1$ such that $\div{F}{fh^{-1}} \in n \DF$
  and thus $fh^{-1} \in F_{n, \emptyset}$. So $f \in F_{n, \emptyset}
  F_{n,\SSS}^1$ and the exactness at $\overline{F}_{n,\SSS}$ is shown.

  To prove exactness at $\overline{\D}_{n,\bar{\SSS}}(F)$ let $f \in
  F_{n, \SSS}$. Then $\div{F}{f} \in
  {\H^1}_{\hspace*{-0.5em}n,\bar{\emptyset}}(F) \subseteq
  {\H^1}_{\hspace*{-0.5em}n,\bar{\emptyset}}(F) + n \DF$. Conversely,
  let $\DD \in \D_{n, \bar{\SSS}}(F)$ with $\DD \in
  {\H^1}_{\hspace*{-0.5em}n,\bar{\emptyset}}(F) + n \DF$. There is
  thus $f \in \Fx$ such that $\div{F}{f} \in \DD + n \DF \subseteq
  \D_{n, \bar{\SSS}}(F) + n \DF$. This implies $f \in F_{n, \SSS}$ and
  $\DD \in \div{F}{ F_{n, \SSS} } + n \DF$, so the exactness at
  $\overline{\D}_{n,\bar{\SSS}}(F)$ is shown.

  By symmetry, the second row is also well-defined and exact. Since we
  have factored by subgroups of the kernels of the pairings, the
  horizontal homomorphisms in each square are still adjoint as in the
  diagram of Theorem~\ref{theoremadjoint1}.

  $(ii)$: Consider $\ev_{n, \bar{\SSS}} : F_{n, \bar{\SSS}} \rightarrow
  \prod_{\p \in \SSS} \mu_n$. Since $\SSS$ is finite and $\mu_n \subseteq F$,
  the approximation theorem shows that $\ev_{n, \bar{\SSS}}$ is
  surjective and that $ \ker( \ev_{n,\bar{\SSS}} ) = F_{n,\bar{\SSS}}^1
  \Fxn$, so $\overline{F}_{n,\bar{\SSS}} \cong \prod_{\p \in \SSS} \mu_n$
  under $\ev_{n, \bar{\SSS}}$. Now consider $\ord_{n, \bar{\SSS}} : \D_{n,
    \bar{\SSS}}(F) \rightarrow \coprod_{\p \in \SSS} \Z/n\Z$. This is
  surjective and $\ker(\ord_{n, \bar{\SSS}}) = n \DF = \H_{n,\SSS}^1(F) + n
  \DF$ since $\H_{n,\SSS}^1(F) = 0$ by the finiteness of $\SSS$. So
  $\overline{\D}_{n, \bar{\SSS}}(F) \cong \coprod_{\p \in \SSS} \Z/n\Z$
  under $\ord_{n, \bar{\SSS}}$. Thus the non-degeneracy of $\tau$ implies
  that of ${\overline{\tau}_{n, \bar{\SSS}}}^\op$.

  $(iii)$: The map $\overline{F}_{n,\emptyset} \rightarrow
  \overline{F}_{n,\SSS}$ is injective since $F_{n, \SSS}^1
  (F^\times)^n \cap F_{n, \emptyset} = F_{n, \emptyset}^1
  (F^\times)^n$ from $F_{n, \SSS}^1 = F_{n, \emptyset}^1 = 1$ as
  $\SSS$ is finite and $(F^\times)^n \subseteq F_{n, \emptyset}$. The
  map $\overline{\D}_{n, \SSS}(F) \rightarrow \overline{\D}_{n,
    \emptyset}(F)$ is surjective: The set
  ${\H^1}_{\hspace*{-0.5em}n,\bar{\emptyset}}(F)$ is just the set of
  all principal divisors of $F$, so by the approximation theorem and
  the finiteness of $\SSS$ every class in $\overline{\D}_{n,
    \emptyset}(F)$ has a representing divisor lying in $\D_{n,
    \SSS}(F)$. This means that we can supplement the diagram in $(i)$
  on the left by zero groups and the non-degenerate zero pairing on
  the vertical line and still have exact rows and adjoint horizontal
  maps for all pairings on the vertical lines. Then
  $\overline{\tau}_{n, \SSS}$ has two non-degenerate pairings on its
  left and on its right side.  By the exactness of the rows in $(i)$,
  by $(ii)$ and by our finiteness assumptions
  we obtain that $\overline{F}_{n,\SSS}$
  and $\overline{\D}_{n,\bar{\SSS}}(F)$ are also
  finite. Lemma~\ref{lemmapairingcrit2} can be applied and yields the
  non-degeneracy of~$\overline{\tau}_{n, \SSS}$.
\end{proof}

\section{Artin Kernel}
\label{sec::clfwithrootunity}

Let $n$ be coprime to $q$ and assume that~$\mu_n \subseteq F$. Let
$\m$ be an effective divisor and $E|F$ an abelian extension
of $F$ unramified outside $\m$ and of exponent $n$.
We have the epimorphism
$$\Frob{E}{F} : \Cl_\m(F) \rightarrow \Gal(E|F) \quad \text{with} \quad \ker
\Frob{E}{F} \subseteq n\Clm(F),$$ observing our convention
\eqref{artinaufklassen}.

Suppose now $E|F$ is the maximal extension of $F$ satisfying the above
assumptions. The main result of this section is that $E|F$ is finite
and that $$\ker \Frob{E}{F} = n\Clm(F).$$ We prove this by
establishing the non-degeneracy of various pairings defined below.


Without a finiteness assumption on $E|F$ we have the following general
theorem on the Kummer pairing:

\begin{thm} \label{thm::kummer1}
  We have $E = F( (F_{n, \m})^{1/n})$ and there is a
  non-degenerate pairing 
  \begin{equation}\label{kappanm}
  \kappa_{n,\m} : F_{n, \m} / (F^\times)^n \times
  \Gal(E|F) \rightarrow \mu_n, \quad (f \cdot (F^\times)^n, \tau ) \mapsto \tau(y)y^{-1}
  \end{equation}\nomenclature[kappanm]{$\kappa_{n,\m}$}{Kummer pairing, see eq.\ (\ref{kappanm})}%
  where $y \in E$ with $y^n =
  f$.
\end{thm}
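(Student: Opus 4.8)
The plan is to prove Theorem~\ref{thm::kummer1} by standard Kummer theory, which applies since $\mu_n \subseteq F$ and $n$ is coprime to $q$, combined with the interpretation of the ramification condition ``unramified outside $\m$'' in terms of the generalized Selmer group $F_{n,\m}$. First I would recall the basic correspondence of Kummer theory: abelian extensions of $F$ of exponent dividing $n$ correspond bijectively to subgroups $\Delta$ of $F^\times/(F^\times)^n$, via $\Delta \mapsto F(\Delta^{1/n})$, and under this correspondence there is a perfect pairing $\Delta \times \Gal(F(\Delta^{1/n})\,|\,F) \to \mu_n$ given by $(f(F^\times)^n, \tau) \mapsto \tau(y)/y$ where $y^n = f$; this is well-defined (independent of the choice of $n$-th root $y$ since $\tau$ fixes $\mu_n$ and $F^\times$) and non-degenerate by the fundamental theorem of Kummer theory.

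Next I would identify the subgroup $\Delta$ corresponding to the maximal $E$ that is unramified outside $\m$ and of exponent $n$. The key local fact, already invoked in the proof of Lemma~\ref{ext} via \cite[p.~111]{Sti}, is that for $f \in F^\times$ the extension $F(f^{1/n})\,|\,F$ is unramified at a place $\p$ (with $\p \nmid n$, automatic here) if and only if $\ord_\p(f) \equiv 0 \bmod n$. Hence the compositum of all $F(f^{1/n})$ with $f$ ranging over $F_{n,\m}$ is exactly the maximal abelian extension of exponent $n$ unramified outside $\supp(\m)$, i.e.\ equals $E$, giving $E = F((F_{n,\m})^{1/n})$. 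The subgroup of $F^\times/(F^\times)^n$ corresponding to $E$ is then precisely the image of $F_{n,\m}$, namely $F_{n,\m}/((F^\times)^n \cap F_{n,\m})$; and since $(F^\times)^n \subseteq F_{n,\m}$ trivially, this image is $F_{n,\m}/(F^\times)^n$ as written in the statement.

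Finally I would assemble these: applying the Kummer pairing from the first step with $\Delta$ equal to the image of $F_{n,\m}$ yields exactly the claimed non-degenerate pairing $\kappa_{n,\m}\colon F_{n,\m}/(F^\times)^n \times \Gal(E|F) \to \mu_n$, $(f(F^\times)^n, \tau)\mapsto \tau(y)y^{-1}$ with $y^n = f$. One should remark here that no finiteness of $E|F$ is needed, since Kummer theory and its pairing work for possibly infinite exponent-$n$ abelian extensions, with $\Gal(E|F)$ taken as a (profinite) topological group and the pairing continuous; non-degeneracy on the Galois side then means that $\tau(y)=y$ for all $y$ with $y^n\in F_{n,\m}$ forces $\tau = \mathrm{id}$, which is clear since such $y$ generate $E$. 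The main obstacle, if any, is purely expository: one must be careful that $\kappa_{n,\m}$ is well-defined (the choice of $y$ with $y^n = f$, and the choice of representative $f$ in its class mod $(F^\times)^n$, both only change $\tau(y)/y$ by $\tau(\zeta)/\zeta = 1$ for $\zeta \in \mu_n$ and by $\tau(a)/a = 1$ for $a \in F^\times$), and that the local ramification criterion is being applied with the correct orientation. Neither of these is deep, so the proof is short.
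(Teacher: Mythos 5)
Your proposal is correct and takes essentially the same route as the paper: the paper's proof is a one-line citation to general Kummer theory (for the correspondence and perfect pairing) together with the ramification behavior of Kummer extensions (which translates ``unramified outside $\m$'' into the Selmer condition $\ord_\p(f) \equiv 0 \bmod n$ for $\p \notin \supp(\m)$), and you have simply spelled out exactly those two ingredients. Your observation that no finiteness of $E|F$ is needed at this stage is also consistent with the paper, which only establishes finiteness afterward in Lemma~\ref{lemma::kummer2}.
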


\begin{proof}
  The statements on $E$ and $\kappa_{n, \m}$ are well-known facts that
  follow from general Kummer theory~\cite[p.\ 279]{main}{Neu} and the ramification
  behavior of Kummer extensions \cite[p.\ 111]{main}{Sti}.
\end{proof}

\begin{lemma} \label{lemma::kummer2} 
  \begin{itemize}
   \item[$(i)$] The group $F_{n, \m}/ (F^\times)^n$ is finite and $E = F(
     (F_{n, \m})^{1/n})$ is finite over $F$.
   \item[$(ii)$] 
  The pairing 
  \begin{align}\label{tnm} 
    t_{n, \m} : F_{n, \m}/ (F^\times)^n
    \times \Cl_\m(F)/n \Cl_\m(F) & \rightarrow \mu_n, \\ (x, y + n
    \Cl_\m(F) ) & \mapsto \kappa_{n,\m}(x,
    \Frob{E}{F}(y)) \nonumber
  \end{align}\nomenclature[taaaanm]{$t_{n,\m}$}{see eq.\ (\ref{tnm})}%
  is a non-degenerate pairing of finite groups. 
   \item[$(iii)$] We have \begin{equation} \label{eq::fundeq} \# F_{n, \m} /
    (F^\times)^n = \# \Cl_\m(F) / n \Cl_\m(F). \end{equation}
  \end{itemize}
\end{lemma}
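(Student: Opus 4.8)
The plan is to reduce parts (ii) and (iii) to the non-degeneracy of the evaluation pairing $\overline{\tau}_{n,\SSS}$ of Section~\ref{sec::evaluation} with $\SSS = \supp(\m)$, which is then handled by the adjointness machinery of Theorem~\ref{theoremadjoint2} together with the case $\SSS = \emptyset$. For (i) I would filter $F_{n,\m}/(F^\times)^n$: the homomorphism $f \mapsto (\ord_\p(f)\bmod n)_{\p \in \supp(\m)}$ has image in the finite group $\coprod_{\p \in \supp(\m)} \Z/n\Z$, and its kernel is represented by those $f$ with $\div{F}{f} \in n\D(F)$; on these, $\div{F}{f} = n\DD \mapsto [\DD]$ is a well-defined homomorphism into the $n$-torsion $\Cl^0(F)[n]$ of the finite degree-zero class group, with kernel $\F_q^\times (F^\times)^n/(F^\times)^n \cong \F_q^\times/(\F_q^\times)^n$ (of order $n$ since $n \mid q-1$). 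Thus $F_{n,\m}/(F^\times)^n$ is finite, hence $E = F((F_{n,\m})^{1/n})$, being generated over $F$ by finitely many $n$-th roots, is finite over $F$. I also record, for later use, that $\Cl_\m(F)/n\Cl_\m(F)$ is finite (from an exact sequence $1 \to U \to \Cl_\m(F) \to \Cl(F) \to 1$ with $U$ finite and $\Cl(F) \cong \Z \oplus \Cl^0(F)$) and depends only on $\supp(\m)$, since the kernel of $\Cl_\m(F) \to \Cl_{\m_0}(F)$ for the reduced divisor $\m_0$ with $\supp(\m_0) = \supp(\m)$ is a finite $p$-group and so vanishes after $-\otimes\Z/n\Z$.

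For (ii), $t_{n,\m}$ is well-defined and bilinear: $E|F$ has exponent $n$, so $\Gal(E|F)$ has exponent $n$, whence $n\Cl_\m(F) \subseteq \ker\Frob{E}{F}$ and $\Frob{E}{F}$ factors through $\Cl_\m(F)/n\Cl_\m(F)$, and bilinearity comes from $\kappa_{n,\m}$. Its left homomorphism is the composite of the injective $(\kappa_{n,\m})_\myleft$ (Theorem~\ref{thm::kummer1}) with the dual of the epimorphism $\Frob{E}{F}$ (Theorems~\ref{thm::artinsurjective} and~\ref{thm::artinkernel}), so $t_{n,\m}$ is non-degenerate on the left. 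For full non-degeneracy I would identify $t_{n,\m}$ with $\overline{\tau}_{n,\SSS}$: given $f \in F_{n,\m}$ with $y^n = f$ and $\DD \in \D_\m(F)$ coprime to $f$, Lemma~\ref{ext} applied with $F' = F$ and $\phiq = \sigma = \mathrm{id}$ yields $\Frob{F(y)}{F}(\DD)(y)/y = f(\DD)^{(q-1)/n}$; since $\Frob{E}{F}(\DD)$ restricts to $\Frob{F(y)}{F}(\DD)$ on $F(y)$ (Theorem~\ref{thm::artinfunktor}(i)), we get $t_{n,\m}(f,[\DD]) = f(\DD)^{(q-1)/n} = \tau_{n,\SSS}(f,\DD)$. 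Combined with $F_{n,\SSS}^1 = \{1\}$ — a nonconstant $f$ that is a unit and $\equiv 1$ at every $\p \notin \SSS$ would have infinitely many zeros, forcing $\overline{F}_{n,\SSS} = F_{n,\m}/(F^\times)^n$ — and with $\overline{\D}_{n,\SSS}(F) \cong \Cl_{\m_0}(F)/n\Cl_{\m_0}(F) \cong \Cl_\m(F)/n\Cl_\m(F)$ induced by $\D_\m(F) \hookrightarrow \D_{n,\SSS}(F)$, this exhibits $t_{n,\m}$ and $\overline{\tau}_{n,\SSS}$ as the same pairing under natural identifications. Then (ii) becomes the non-degeneracy of $\overline{\tau}_{n,\SSS}$, and (iii) follows from Lemma~\ref{lemmapairingcrit1}.

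It thus remains to prove $\overline{\tau}_{n,\SSS}$ non-degenerate for finite $\SSS$. By Theorem~\ref{theoremadjoint2}(iii) it suffices to treat $\SSS = \emptyset$, where $\overline{F}_{n,\emptyset} = F_{n,0}/(F^\times)^n$ and $\overline{\D}_{n,\emptyset}(F) = \Cl(F)/n\Cl(F)$ are finite with $\#\overline{F}_{n,\emptyset} = n\cdot\#\Cl^0(F)[n] = \#\overline{\D}_{n,\emptyset}(F)$ by the count in (i). Left non-degeneracy of $\overline{\tau}_{n,\emptyset}$ amounts to: if $f(\p)^{(q-1)/n} = 1$ for all $\p \notin \supp(f)$, then $f \in (F^\times)^n$. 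Indeed, $E' = F(f^{1/n})$ is then everywhere unramified, so $0$ is a modulus of $E'|F$ and $\Frob{E'}{F}$ factors through $\Cl(F)$; every class of $\Cl(F)$ has a representative coprime to $f$, on which $\Frob{E'}{F}$ acts trivially on $f^{1/n}$ by the computation above, so $\Frob{E'}{F}$ is trivial; being surjective, $\Gal(E'|F) = 1$ and $f \in (F^\times)^n$. Now Lemma~\ref{lemmapairingcrit1} gives non-degeneracy of $\overline{\tau}_{n,\emptyset}$, Theorem~\ref{theoremadjoint2}(iii) propagates it to every finite $\SSS$, and the identification above delivers (ii) and hence (iii). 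I expect the main obstacle to be the passage from $\SSS = \emptyset$ to general finite $\SSS$ — this is exactly what the symmetric diagram of Theorem~\ref{theoremadjoint2} is designed to supply — together with making the identification of $t_{n,\m}$ with $\overline{\tau}_{n,\SSS}$ precise, i.e.\ matching all three pairs of quotient groups and checking that the two pairings agree via Lemma~\ref{ext} and Weil reciprocity.
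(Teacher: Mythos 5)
Your proposal is correct and follows essentially the same route as the paper: identify $t_{n,\m}$ with $\overline{\tau}_{n,\SSS}$ via Lemma~\ref{ext}, obtain left non-degeneracy from the Kummer pairing together with surjectivity of the Artin map, combine with the cardinality count at $\SSS = \emptyset$ and Lemma~\ref{lemmapairingcrit1} to get non-degeneracy of $\overline{\tau}_{n,\emptyset}$, and propagate to general finite $\SSS$ via Theorem~\ref{theoremadjoint2}(iii). Your minor variations --- the direct filtration count in (i), the $p$-group-kernel argument for why $\Cl_\m(F)/n\Cl_\m(F)$ depends only on $\supp(\m)$ (the paper instead uses $q^e {\H^1}_{\hspace*{-0.5em}n,\bar{\SSS}}(F) \subseteq \H_\m(F)$), and the explicit unfolding of left non-degeneracy at $\SSS=\emptyset$ rather than citing $\kappa_{n,\m}$ directly --- are all equivalent to the paper's steps.
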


\begin{proof}
  $(i)$: First we link to our notation from the previous section. Let $\SSS =
  \supp(\m)$. The finiteness of $\SSS$
  implies \begin{align} \label{eqmm0iso} F_{n, \m} / (F^\times)^n =
    F_{n, \SSS} / (F^\times)^n = \overline{F}_{n, \SSS} \text{ \quad
      and \quad } \Cl_\m(F) / n \Cl_\m(F) \,\cong\, \overline{\D}_{n,
      \SSS}(F). \end{align} The two equalities in \eqref{eqmm0iso}
  follow directly from the definitions and $F_{n,\SSS}^1 = 1$. To
  prove the isomorphism in \eqref{eqmm0iso} observe that we have
  epimorphisms $$\DmF \rightarrow \D_{n, \SSS}(F) / n \DF \rightarrow
  \overline{\D}_{n, \SSS}(F),$$ where the first epimorphism is given
  by the inclusion $\D_\m(F) \subseteq \D_{n,\SSS}(F)$ and the second
  epimorphism is just the quotient map upon factoring out
  ${\H^1}_{\hspace*{-0.5em}n,\bar{\SSS}}(F) + n \DF$.  The kernel of
  the composition epimorphism is thus
  $$ \left( {\H^1}_{\hspace*{-0.5em}n,\bar{\SSS}}(F) + n \DF \right)
  \cap \DmF = {\H^1}_{\hspace*{-0.5em}n,\bar{\SSS}}(F) + n \DmF,$$ as
  ${\H^1}_{\hspace*{-0.5em}n,\bar{\SSS}}(F) \subseteq \DmF$.  It
  remains to show \begin{align} \label{eqmm0}
    {\H^1}_{\hspace*{-0.5em}n,\bar{\SSS}}(F) + n \DmF & = \H_\m(F) + n
    \DmF. \end{align} The inclusion $\supseteq$ is obvious by
  ${\H^1}_{\hspace*{-0.5em}n,\bar{\SSS}}(F) \supseteq \H_\m(F)$ from
  the definitions. For $\subseteq$ there is $e \in \Z^{\geq 1}$ such
  that $\sum_{\p \in S} q^e \p \geq \m$. Since $f \equiv 1 \bmod \p$
  implies $f^{q^e} \equiv 1 \bmod \p^{q^e}$ in $\Op$ we obtain $$q^e
  {\H^1}_{\hspace*{-0.5em}n,\bar{\SSS}}(F) \subseteq \H_\m(F).$$ This
  together with $\gcd(q^e, n) = 1$ proves $\subseteq$, hence
  \eqref{eqmm0}, and establishes the isomorphism in \eqref{eqmm0iso}.

  Write $\Cl(F) = \Cl_0(F)$ and denote by $\Cl^0(F)$ the subgroup of
  $\Cl(F)$ of divisor classes of degree zero.  There are well-known
  exact sequences \begin{align} \label{eq::ex1} 0 \rightarrow
    \F_q^\times / ( \F_q^\times)^n \rightarrow F_{n, 0} / \Fxn
    \rightarrow \Cl^0(F)[n] \rightarrow 0 \end{align}
  and \begin{align} \label{eq::ex2} 0 \rightarrow \Cl^0(F) / n
    \Cl^0(F) \rightarrow \Cl(F) / n \Cl(F) \rightarrow \Z/n \Z
    \rightarrow 0,\end{align} where the second homomorphisms in
  \eqref{eq::ex1} and \eqref{eq::ex2} are given by inclusion, the
  third homomorphism in \eqref{eq::ex1} is given by $f \cdot \Fxn
  \mapsto \div{F}{f}/n + \H(F)$, and the third homomorphism in
  \eqref{eq::ex2} is given by the degree function.  The exactness of
  \eqref{eq::ex1}, \eqref{eq::ex2}, the equalities $n = \# \F_q^\times
  / ( \F_q^\times)^n = \# \Z / n\Z$ and the isomorphism $G[n] \cong G
  / nG$ for every finite abelian group $G$ yield \begin{align} \notag
    \# F_{n, 0}/ \Fxn & = \# \F_q^\times / ( \F_q^\times)^n \cdot \#
    \, \Cl^0(F)[n] \\ & = \label{eq:fundeq0} \# \Z / n \Z \cdot \#\,
    \Cl^0(F) / n \Cl^0(F) \\ \notag & = \# \, \Cl(F) / n
    \Cl(F).\end{align} We obtain the finiteness of $\overline{F}_{n,
    \emptyset} = F_{n, 0}/ \Fxn$. The exactness of the rows in
  Theorem~\ref{theoremadjoint2}, $(i)$ together with $(ii)$ yields the
  finiteness of $\overline{F}_{n, \SSS} = F_{n, \m}/ \Fxn$.
  
  $(ii)$: We now wish to show by an application of Lemma~\ref{ext}
  that $t_{n,\m} = \overline{\tau}_{n, \SSS}$ under \eqref{eqmm0iso}.
  Since here $F' = F$ we take $\sigma = \id$ in Lemma~\ref{ext}.
  Given arguments to $t_{n,\m}$ we can choose coprime representatives
  $f \in F_{n,\m}$ and $\DD \in \DmF$ and $y \in E$ such that $y^n =
  f$ and $h = y^{q-1} = f^{(q-1)/n}$. Then $\Frob{E}{F}(\DD +
  \H_\m(F))|_{F(y)} = \Frob{F(y)}{F}(\DD + \H_\m(F)) = \tau_\DD$ and
    \begin{align} \notag
        t_{n,\m}( f \cdot (F^\times)^n, & \, (\DD + \H_\m(F)) + n
        \Cl_\m(F)) = \kappa_{n, \m}(f\cdot (F^\times)^n,
        \Frob{E}{F}(\DD + \H_\m(F))) \\ \label{eq::definitiontatepairing} 
        & = \tau_\DD(y) y^{-1} = h(\DD) =
        f(\DD)^{(q-1)/n} \\ \notag & = \tau_{n, \SSS} (f, \DD) =
        \overline{\tau}_{n, \SSS}( f \cdot (F^\times)^n, \DD + 
        {\H^1}_{\hspace*{-0.5em}n,\bar{\SSS}}(F) + n \D(F)) )
     \end{align}
   by tracing through the definitions of $\tau_{n, \SSS}$ and
   $\overline{\tau}_{n, \SSS}$. This shows that indeed $t_{n,\m} =
   \overline{\tau}_{n, \SSS}$ under~\eqref{eqmm0iso}.

  By the surjectivity of $\Frob{E}{F}$ and the non-degeneracy of
  $\kappa_{n, \SSS}$ we have that $t_{n, \m}$ is non-degenerate on the
  left for any $\m$.  Then $t_{n, 0} = \overline{\tau}_{n, \emptyset}$
  is non-degenerate by Lemma~\ref{lemmapairingcrit1} and
  \eqref{eqmm0iso}. Finally $t_{n, \m} = \overline{\tau}_{n, \SSS}$ is
  non-degenerate for any $\m$  by Theorem~\ref{theoremadjoint2}, $(iii)$.

  $(iii)$: This is a direct consequence of $(ii)$ and
  Lemma~\ref{lemmapairingcrit1}.
\end{proof}


\begin{thm} \label{thm::kummer2}
  The maximal abelian extension $E|F$ of $F$ unramified outside $\m$
  of exponent $n$ satisfies 
  $$\ker \Frob{E}{F} = n \Cl_\m(F) \quad \text{ and } \quad [E:F] = \#
  \Cl_\m(F) / n \Cl_\m(F).$$
\end{thm}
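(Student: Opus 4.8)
The plan is to read the theorem off directly from the non-degenerate pairing $t_{n,\m}$ of Lemma~\ref{lemma::kummer2} together with the non-degeneracy of the Kummer pairing $\kappa_{n,\m}$ of Theorem~\ref{thm::kummer1}. Essentially all of the substantial work is already contained in those two facts, so what remains is an identification of kernels.

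First I would note that $E|F$ is finite by Lemma~\ref{lemma::kummer2}~$(i)$, so that $\Gal(E|F)$ and $[E:F]$ make sense, and that $\Frob{E}{F} : \Cl_\m(F) \to \Gal(E|F)$ is a well-defined epimorphism by Theorems~\ref{thm::artinsurjective} and~\ref{thm::artinkernel} and convention~\eqref{artinaufklassen}. Since $\Gal(E|F)$ has exponent $n$ we get $\Frob{E}{F}(n\Cl_\m(F)) = \Gal(E|F)^n = \{\id\}$, hence $n\Cl_\m(F) \subseteq \ker\Frob{E}{F}$, so that $\ker\Frob{E}{F}/n\Cl_\m(F)$ is a well-defined subgroup of $\Cl_\m(F)/n\Cl_\m(F)$.

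Next I would compute the right kernel of $t_{n,\m}$. By its very definition $t_{n,\m}(x, y + n\Cl_\m(F)) = \kappa_{n,\m}(x, \Frob{E}{F}(y))$, so a class $y + n\Cl_\m(F)$ lies in the right kernel of $t_{n,\m}$ precisely when $\kappa_{n,\m}(x, \Frob{E}{F}(y)) = 1$ for all $x \in F_{n,\m}/(F^\times)^n$; by the non-degeneracy of $\kappa_{n,\m}$ on the right (Theorem~\ref{thm::kummer1}) this happens precisely when $\Frob{E}{F}(y) = \id$, i.e.\ when $y \in \ker\Frob{E}{F}$. Thus the right kernel of $t_{n,\m}$ equals $\ker\Frob{E}{F}/n\Cl_\m(F)$. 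But $t_{n,\m}$ is non-degenerate by Lemma~\ref{lemma::kummer2}~$(ii)$, so its right kernel is trivial, and therefore $\ker\Frob{E}{F} = n\Cl_\m(F)$.

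Finally, the isomorphism theorem applied to the epimorphism $\Frob{E}{F}$ gives $\Gal(E|F) \cong \Cl_\m(F)/\ker\Frob{E}{F} = \Cl_\m(F)/n\Cl_\m(F)$, and taking cardinalities yields $[E:F] = \#\Cl_\m(F)/n\Cl_\m(F)$ (this count also follows from Lemma~\ref{lemma::kummer2}~$(iii)$ and Lemma~\ref{lemmapairingcrit1}). I do not expect a genuine obstacle in this last step: the only point requiring care is to keep the two pairings apart, namely that $\kappa_{n,\m}$ pairs $F_{n,\m}/(F^\times)^n$ with $\Gal(E|F)$ while $t_{n,\m}$ pairs it with $\Cl_\m(F)/n\Cl_\m(F)$, and to use the non-degeneracy of the former to convert the vanishing condition in the latter into $\Frob{E}{F}(y) = \id$. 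The real difficulty of the theorem has been absorbed entirely into the proof of Lemma~\ref{lemma::kummer2}, in particular into the non-degeneracy of $\overline{\tau}_{n,\SSS}$ obtained via Theorem~\ref{theoremadjoint2}.
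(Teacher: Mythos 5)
Your proof is correct and follows the same route as the paper: the theorem is read off directly from the non-degeneracy of $t_{n,\m}$ (Lemma~\ref{lemma::kummer2}) together with the non-degeneracy of $\kappa_{n,\m}$ (Theorem~\ref{thm::kummer1}) and the surjectivity of the Artin map. You merely make explicit the kernel identification that the paper's one-line proof leaves implicit in citing Lemma~\ref{lemma::kummer2}~$(ii)$ and $(iii)$.
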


\begin{proof}
  Lemma~\ref{lemma::kummer2}, $(ii)$ and $(iii)$ imply $\ker \Frob{E}{F}
  = n \Cl_\m(F)$. The surjectivity of the Artin map (or a direct
  application of Kummer theory) and Lemma~\ref{lemma::kummer2}, $(i)$
  yield $[E:F] = \# \Cl_\m(F) / n \Cl_\m(F)$, as desired.
\end{proof}

\section{Class Fields} \label{sec::classfields}

We finally prove our main Theorem~\ref{thm::classfieldtheory} on class
field theory for abelian extensions of degree coprime to $q$. Our
reasoning consists of a number of reductions using mostly standard 
techniques. A novel feature is that we do not assume the second
inequality. The induction proof of Theorem~\ref{thm::classfieldtheory}
implicitly takes care of this, so that the second inequality is proven
together with the existence theorem in Theorem~\ref{thm::classfieldtheory}.

All fields will be contained in some fixed algebraic closure
$\bar{F}$ of the global function field $F$ and be finite and separable
over $F$.  

\begin{defn}\label{def:classfield}
Let $\m$ be an effective divisor of $F$, $H$ a subgroup of
$\Cl_\m(F)$ of finite index and $E|F$ an abelian extension.  We say
that $E$ is the {\bf class field} over $F$ defined by $H$ modulo $\m$
if $\m$ is a modulus of $E|F$ and if $$H = \ker \Frob{E}{F} = \im
\Norm{E}{F}$$ for the maps $\Frob{E}{F} : \Cl_\m(F) \rightarrow
\Gal(E|F)$ and $\Norm{E}{F} : \Cl_\m(E) \rightarrow \Cl_\m(F)$.
\end{defn}
\nomenclature[H]{$H$}{subgroup of $\Cl_\m(F)$ defining a class field, see Def.\ \ref{def:classfield}}

As the definition suggests, given $H$ there is at most one class
field over $F$ defined by $H$ modulo $\m$. This is shown by the following lemma.

\begin{lemma} \label{unique}
  Let $E_1|F$ and $E_2|F$ be abelian with modulus $\m$.  Then $E_1 =
  E_2$ if and only if $\ker \Frob{E_1}{F} = \ker \Frob{E_2}{F}$.
\end{lemma}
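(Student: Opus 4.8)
The plan is to prove both implications, with the forward direction being trivial and the reverse direction being the substance. If $E_1 = E_2$ then obviously $\ker \Frob{E_1}{F} = \ker \Frob{E_2}{F}$, so assume conversely that $\ker \Frob{E_1}{F} = \ker \Frob{E_2}{F}$ and set $E = E_1 E_2$. By Corollary~\ref{cor::artinfunktor}, $E|F$ is abelian with modulus $\m$, and the same corollary gives that $E_1$ and $E_2$ have modulus $\m$ as intermediate fields. The key idea is to compare each $E_i$ with the compositum $E$ via the functoriality of the Artin map.

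First I would show $E = E_1$ by proving $\Gal(E|E_1) = 1$. Take $\sigma \in \Gal(E|E_1)$. By surjectivity of the Artin map (Theorem~\ref{thm::artinsurjective}, applied on the level of class groups via \eqref{artinaufklassen}) there is a class $c \in \Cl_\m(F)$ with $\Frob{E}{F}(c) = \sigma$. Since $\sigma|_{E_1} = \id$, functoriality (Theorem~\ref{thm::artinfunktor}, $(i)$, with $\FD = F$) gives $\Frob{E_1}{F}(c) = \Frob{E}{F}(c)|_{E_1} = \id$, so $c \in \ker \Frob{E_1}{F} = \ker \Frob{E_2}{F}$. Applying functoriality again with the intermediate field $E_2$ yields $\Frob{E_2}{F}(c) = \Frob{E}{F}(c)|_{E_2} = \sigma|_{E_2} = \id$ as well — wait, this needs $\sigma|_{E_2}$, which is not immediate. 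Let me instead argue directly: $c \in \ker\Frob{E_2}{F}$ means $\Frob{E}{F}(c)|_{E_2} = \sigma|_{E_2} = \id$. Hence $\sigma$ fixes both $E_1$ and $E_2$, so it fixes $E = E_1 E_2$, i.e.\ $\sigma = \id$. Therefore $\Gal(E|E_1) = 1$ and $E = E_1$. By the symmetric argument $E = E_2$, so $E_1 = E_2$.

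The main obstacle — really the only nontrivial point — is making sure the Artin map is being used with a consistent modulus across $E$, $E_1$, and $E_2$, so that all three Artin maps descend to maps on the single group $\Cl_\m(F)$ and the functoriality statement of Theorem~\ref{thm::artinfunktor}$(i)$ can be compared with equal domains. This is exactly what Corollary~\ref{cor::artinfunktor} secures, together with the convention \eqref{artinaufklassen} under which $\Frob{E}{F}$ is a well-defined epimorphism on $\Cl_\m(F)$. I would state this alignment explicitly at the start of the proof. The surjectivity of $\Frob{E}{F}$ is what lets us realize an arbitrary $\sigma \in \Gal(E|E_1) \subseteq \Gal(E|F)$ as a Frobenius class, and the kernel hypothesis then propagates the triviality of $\sigma$ to all of $E$; no further machinery is needed.
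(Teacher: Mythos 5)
Your proof is correct and is essentially the paper's argument unpacked: both form the compositum $E_1E_2$, invoke Corollary~\ref{cor::artinfunktor} to get a common modulus $\m$, and combine functoriality of the Artin map (Theorem~\ref{thm::artinfunktor}$(i)$) with surjectivity of $\Frob{E_1E_2}{F}$. The paper compresses your element-chasing with an automorphism $\sigma\in\Gal(E_1E_2|E_1)$ into the single identity $\ker\Frob{E_1E_2}{F}=\ker\Frob{E_1}{F}\cap\ker\Frob{E_2}{F}$ and then compares kernels, but the underlying facts used are identical.
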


\begin{proof}
   It is clear that $E_1 = E_2$ implies $\ker \Frob{E_1}{F} = \ker
   \Frob{E_2}{F}$.  For the other implication we observe that
   $E_1E_2|F$ is abelian with modulus $\m$ by
   Corollary~\ref{cor::artinfunktor} and $$\ker \Frob{E_1E_2}{F} =
   \ker \Frob{E_1}{F} \cap \ker \Frob{E_2}{F} = \ker \Frob{E_1}{F} =
   \ker \Frob{E_2}{F}.$$ The surjectivity of $\Frob{E_1E_2}{F}$ shows
   $E_1 E_2 = E_1 = E_2$.
\end{proof}

The following further notions will be convenient. 

\begin{defn}\label{def:correspondence}
Let $\mathcal E$ be
a set of abelian extensions of $F$ and $\mathcal H$ a set of pairs
$(\m, H)$ of effective divisors $\m$ of $F$ and subgroups $H$ of
$\Cl_\m(F)$ of finite index. By Lemma~\ref{unique} we have a partial
map $C : \mathcal H \rightarrow \mathcal E$ associating to every $(\m, H)
\in \mathcal H$ its class field $E \in \mathcal E$ defined by $H$
modulo $\m$, and $C$ is injective on subsets of pairs sharing the
same modulus. We say that the {\bf class field correspondence} holds
for $\mathcal E$ and $\mathcal H$ if $C : \mathcal H
\rightarrow \mathcal E$ is defined on all of $\mathcal H$ and is
surjective.

Furthermore, we say that $F$ is a {\bf base field for class field
  theory} (coprime to $q$, of exponent $n$) if the class field
correspondence holds between the set of all abelian extensions of $F$
(of degree coprime to $q$, of exponent $n$) and the set of all pairs
$(\m, H)$ where $\m$ is an effective divisor of $F$ and $H$ is a subgroup of
$\Cl_\m(F)$ of finite index (with $(\Cl_\m(F) : H)$ coprime to $q$,
with~$H \supseteq n \Cl_\m(F)$).
\end{defn}


\begin{lemma}  \label{lemma::AKNorm}
    Let $E|F$ be abelian with modulus $\m$. Then $$\ker \Frob{E}{F}
    \supseteq \im \Norm{E}{F} \supseteq [E:F] \cdot \Cl_\m(F).$$
\end{lemma}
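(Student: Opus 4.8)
The statement is a chain of two inclusions, and I would prove each separately, both by reducing to the case of a single place and using the functoriality of the Artin map from Theorem~\ref{thm::artinfunktor}.

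For the inclusion $\ker \Frob{E}{F} \supseteq \im \Norm{E}{F}$, I would take a divisor class $\overline{\DD}$ in $\Cl_\m(E)$ with representative $\DD \in \D_\m(E)$, and compute $\Frob{E}{F}(\Norm{E}{F}(\DD))$. By Theorem~\ref{thm::artinfunktor}$(i)$ applied with $E' = E$ and the intermediate field being $E$ itself (so $\FD = E$, and the restriction map $\Gal(E|E) \to \Gal(E|F)$ is trivial in the sense that $\Frob{E}{F}(\Norm{E}{F}(\DD)) = \Frob{E}{E}(\DD)|_{F}$), one sees that $\Frob{E}{E}$ is the trivial map since $\Gal(E|E)$ is trivial; more concretely, for a place $\q$ of $E$ lying over a place $\p$ of $F$ with residue degree $f$, the Frobenius $\sigma_\q \in \Gal(E|F)$ attached to $\q$ satisfies $\sigma_\q = \sigma_\p^{f}$ where $\deg(\q) = f \deg(\p)$, and $\Norm{E}{F}(\q) = f(\q) \cdot \p$ where... — actually the cleaner route is: $\Norm{E}{F}(\q) = \deg(\q)/\deg(\p) \cdot \p = f_\q \cdot \p$ is wrong in general; rather $\Norm{E}{F}(\q) = f(\q|\p)\,\p$ only when $\q$ is the unique place over $\p$, so I should instead argue place by place that $\Frob{E}{F}(\Norm{E}{F}(\q)) = \sigma_\p^{\,f(\q|\p)}$ while $\sigma_\q = \sigma_\p^{\,f(\q|\p)}$ as elements of $\Gal(E|F)$, and $\sigma_\q$ fixes $E$ pointwise so equals the identity. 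Hence $\Frob{E}{F}(\Norm{E}{F}(\q)) = \mathrm{id}$ for every place $\q$ of $E$ outside $\supp(\Con{E}{F}(\m))$, and by linearity $\Frob{E}{F}(\Norm{E}{F}(\DD)) = \mathrm{id}$ for all $\DD$. Since $\m$ is a modulus this descends to the class group level, giving $\im \Norm{E}{F} \subseteq \ker \Frob{E}{F}$.

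For the inclusion $\im \Norm{E}{F} \supseteq [E:F] \cdot \Cl_\m(F)$, I would use that every place $\p$ of $F$ outside $\supp(\m)$ that splits completely in $E$ has a place $\q$ above it with $\Norm{E}{F}(\q) = \p$; more generally, for an arbitrary place $\p$ outside $\supp(\m)$, picking any place $\q$ above it gives $\Norm{E}{F}(\q) = f(\q|\p)\,\p$, so $f(\q|\p)\,\p \in \im \Norm{E}{F}$, but the exponent $f(\q|\p)$ is a divisor of $[E:F]$ and need not equal it. The clean argument instead: for any place $\p$ of $F$ outside $\supp(\m)$, consider $\Con{E}{F}(\p) = \sum_{\q | \p} e(\q|\p)\,\q$; then $\Norm{E}{F}(\Con{E}{F}(\p)) = \sum_{\q|\p} e(\q|\p) f(\q|\p)\,\p = [E:F]\,\p$ by the fundamental identity. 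So $[E:F]\,\p \in \im \Norm{E}{F}$ for every place $\p$ outside $\supp(\m)$, hence $[E:F]\,\DD \in \im \Norm{E}{F}$ for every $\DD \in \D_\m(F)$ by linearity, and descending to $\Cl_\m(F)$ using that $\im \Norm{E}{F}$ contains the image of $\H_\m(F)$ gives $[E:F]\cdot\Cl_\m(F) \subseteq \im \Norm{E}{F}$.

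The main obstacle is bookkeeping rather than conceptual: making sure that all the divisor computations respect the condition of being coprime to $\m$ (which is automatic since we only ever use places outside $\supp(\Con{E}{F}(\m))$), and correctly passing from the level of divisor groups $\D_\m$ to the ray class groups $\Cl_\m$ using that $\m$ is a modulus of $E|F$ (so $\H_\m(F) \subseteq \ker \Frob{E}{F}$) and that $\Norm{E}{F}(\H_\m(E)) \subseteq \H_\m(F)$, which was noted right after Definition~\ref{modulus}. Both inclusions ultimately rest on the single identity $\Frob{E}{F}\circ\Norm{E}{F} = \mathrm{id}$ on divisors together with the fundamental identity $\sum_{\q|\p} e(\q|\p)f(\q|\p) = [E:F]$, so the proof is short once these are in place.
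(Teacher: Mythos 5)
Your proof is correct and takes essentially the same route as the paper: the first inclusion is exactly Theorem~\ref{thm::artinfunktor}$(i)$ applied with $F' = E' = E$ (your place-by-place computation $\Frob{E}{F}(\Norm{E}{F}(\q)) = \sigma_\p^{\,f(\q|\p)} = \id$ is precisely what that theorem packages), and the second follows from the observation that $\Norm{E}{F}\circ\Con{E}{F}$ is multiplication by $[E:F]$ on $\Cl_\m(F)$, which is your fundamental-identity calculation. One small note: the caveat you inserted mid-argument is misplaced — the formula $\Norm{E}{F}(\q) = f(\q|\p)\,\p$ holds for every single place $\q$ of $E$ by the very definition of the divisor norm (and $\deg\q = f(\q|\p)\deg\p$ is always valid), not only when $\q$ is the unique place over $\p$ — though since you end up using the correct formula anyway, this does not affect the argument.
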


\begin{proof}
    The first $\supseteq$ follows from
    Theorem~\ref{thm::artinfunktor}, $(i)$. The second $\supseteq$
    follows since $\Norm{E}{F} \circ \Con{E}{F}$ is equal to
    multiplication by $[E:F]$ on $\Cl_\m(F)$.
\end{proof}

We will use the following reductions.

\begin{lemma} \label{thm:clfintermediate}
   Suppose $E$ is the class field over $F$ defined by $H$ modulo $\m$.
   Then the class field correspondence holds for the set of all
   intermediate fields of $E|F$ and the set of all pairs $(\m, U)$ where 
   $U$ is a subgroup of $\Cl_\m(F)$ containing~$H$.
\end{lemma}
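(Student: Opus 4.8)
The aim is to establish the class field correspondence between intermediate fields of $E|F$ and pairs $(\m,U)$ with $H \subseteq U \subseteq \Cl_\m(F)$. I will use Galois theory on $E|F$ together with the properties of the Artin map collected in Theorem~\ref{thm::artinfunktor} and Corollary~\ref{cor::artinfunktor}. Since $E$ is the class field of $H$ modulo $\m$, we have an epimorphism $\Frob{E}{F} : \Cl_\m(F) \to \Gal(E|F)$ with kernel exactly $H$, hence a canonical isomorphism $\Cl_\m(F)/H \cong \Gal(E|F)$. The Galois correspondence for $E|F$ identifies intermediate fields $L$ of $E|F$ with subgroups of $\Gal(E|F)$ via $L \mapsto \Gal(E|L)$, and under the above isomorphism these subgroups correspond bijectively to subgroups $U$ of $\Cl_\m(F)$ with $H \subseteq U$, namely $U = \Frob{E}{F}^{-1}(\Gal(E|L))$, equivalently $U/H = \Gal(E|L)$.

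The key steps, in order: First, fix an intermediate field $L$ of $E|F$; by Corollary~\ref{cor::artinfunktor} (second statement) $L|F$ is abelian with modulus $\m$. By Theorem~\ref{thm::artinfunktor}~$(i)$ applied with $\FD = F$, $\ED = E$, the composition $\Cl_\m(F) \xrightarrow{\Frob{E}{F}} \Gal(E|F) \xrightarrow{\text{restr.}} \Gal(L|F)$ equals $\Frob{L}{F}$; since $\Frob{E}{F}$ is surjective, so is $\Frob{L}{F}$, and its kernel is $U := \Frob{E}{F}^{-1}(\Gal(E|L))$, which contains $H = \ker\Frob{E}{F}$. This shows $\m$ is a modulus of $L|F$ and $\ker\Frob{L}{F} = U$. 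Second, I must show $U = \im\Norm{L}{F}$ to conclude that $L$ is the class field of $U$ modulo $\m$ (completing the definition of $C$ on this set of pairs). The inclusion $\im\Norm{L}{F} \subseteq U = \ker\Frob{L}{F}$ is Lemma~\ref{lemma::AKNorm}. For the reverse inclusion, I would argue as follows: $E$ is the class field of $H$ modulo $\m$, so $\im\Norm{E}{F} = H$; now $\Norm{L}{F} = \Norm{L}{F}$, and since $E|L$ is an abelian extension, $\Norm{E}{L}$ maps $\Cl_\m(E)$ onto... — here one needs surjectivity of $\Norm{E}{L}$ onto a suitable subgroup, which follows from the surjectivity of the Artin map $\Frob{E}{L}$ and the first inequality (Theorem~\ref{thm::artinsurjective} gives $\Frob{E}{L}$ surjective, hence the norm index $(\Cl_\m(L):\im\Norm{E}{L})$ is at least $[E:L]$); combined with $\#(\Cl_\m(L)/\ker\Frob{L}{F}\text{-type counting})$ this pins down $\im\Norm{L}{F} \supseteq U$. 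More cleanly: since $E$ is a class field, $\#(\Cl_\m(F)/H) = \#\Gal(E|F) = [E:F]$, and an analogous count for $\Cl_\m(L)/\im\Norm{E}{L}$ together with the transitivity $\Norm{L}{F}\circ\Norm{E}{L} = \Norm{E}{F}$ and $\im\Norm{E}{F} = H$ forces $\im\Norm{L}{F}$ to have index exactly $(\Cl_\m(F):U) = [L:F]$ in $\Cl_\m(F)$, hence equals $U$ by the inclusion already known. Third, for surjectivity of $C$: given any pair $(\m,U)$ with $H \subseteq U$, set $L$ to be the fixed field in $E$ of the subgroup $\Frob{E}{F}(U) \subseteq \Gal(E|F)$; the Galois correspondence plus the computation above show $L$ is the class field of $U$ modulo $\m$, so $C(\m,U) = L$. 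Injectivity of $C$ on this set of pairs (all sharing modulus $\m$) is already granted by Lemma~\ref{unique}, as noted in Definition~\ref{def:correspondence}.

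The main obstacle is the norm image equality $\im\Norm{L}{F} = U$, i.e.\ producing enough norms. The inclusion $\subseteq$ is free from Lemma~\ref{lemma::AKNorm}, but $\supseteq$ requires leveraging that $E$ itself is a \emph{class field} (so that $\im\Norm{E}{F}$ is exactly $H$, not merely contained in $\ker\Frob{E}{F}$), combined with surjectivity of the intermediate Artin map $\Frob{E}{L}$ from Theorem~\ref{thm::artinsurjective} and the transitivity of norms; one has to chase indices carefully to see that the norm index for $L|F$ comes out to exactly $[L:F] = (\Cl_\m(F):U)$. Everything else is a routine translation between the Galois correspondence for $E|F$ and subgroups of $\Cl_\m(F)$ above $H$ via the isomorphism $\Frob{E}{F}$.
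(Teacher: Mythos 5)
Your reduction to showing $\ker\Frob{L}{F}=U$ and $\im\Norm{L}{F}=U$ for $L=\Fix(\Frob{E}{F}(U))$ is correct, and the inclusion $\im\Norm{L}{F}\subseteq U$ via Lemma~\ref{lemma::AKNorm} is fine. The gap is in the reverse inclusion $\im\Norm{L}{F}\supseteq U$. Your index computation invokes ``an analogous count for $\Cl_\m(L)/\im\Norm{E}{L}$,'' but no such count is available: you only know that $E$ is a class field over $F$, not over $L$, so you cannot conclude that $(\Cl_\m(L):\im\Norm{E}{L})$ equals $[E:L]$ (the equality $\im\Norm{E}{L}=\ker\Frob{E}{L}$ is part of what the lemma is establishing). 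The surjectivity of $\Frob{E}{L}$ and the containment $\im\Norm{E}{L}\subseteq\ker\Frob{E}{L}$ only give $(\Cl_\m(L):\im\Norm{E}{L})\geq[E:L]$, and together with $H=\Norm{L}{F}(\im\Norm{E}{L})$ this yields $(\im\Norm{L}{F}:H)\leq(\Cl_\m(L):\im\Norm{E}{L})$ --- an upper bound, when what you need is a lower bound $(\im\Norm{L}{F}:H)\geq[E:L]$ to force $(\Cl_\m(F):\im\Norm{L}{F})\leq[L:F]$. That missing lower bound is precisely the second inequality, which the paper deliberately does not assume at this stage.

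The paper avoids this entirely by a pigeonhole argument: it shows directly that the order-reversing map $L\mapsto\im\Norm{L}{F}$ on intermediate fields of $E|F$ is injective, by proving that $E_1\supseteq E_2$ with $\im\Norm{E_1}{F}=\im\Norm{E_2}{F}$ forces $E_1=E_2$. The key computation takes $x\in\Cl_\m(E_2)$, lifts $\Norm{E_2}{F}(x)$ to some $y\in\Cl_\m(E_1)$, sets $u=x-\Norm{E_1}{E_2}(y)$, and uses Theorem~\ref{thm::artinfunktor} together with $\Norm{E_2}{F}(u)=0$ to show $\Frob{E_1}{E_2}(x)=\id$ for all $x$, whence $E_1=E_2$ by Lemma~\ref{unique}. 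Since $L\mapsto\ker\Frob{L}{F}$ is a bijection onto overgroups of $H$, $\im\Norm{L}{F}\subseteq\ker\Frob{L}{F}$ for all $L$, $\im\Norm{L}{F}\supseteq\im\Norm{E}{F}=H$, and everything is finite, injectivity of $L\mapsto\im\Norm{L}{F}$ forces $\im\Norm{L}{F}=\ker\Frob{L}{F}$ for every $L$. This sidesteps any need for the second inequality, which is exactly the point of the paper's approach; you would need to replace your index chase with this injectivity-plus-counting argument (or something equivalent) to close the gap.
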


\begin{proof}
   From the surjectivity of $\Frob{E}{F}$ it is clear that there is a
   bijection between overgroups $U$ of $H$ and intermediate fields of
   $E|F$ given by $U \mapsto \Fix( \Frob{E}{F}( U) )$. It remains to
   be shown that $\Fix( \Frob{E}{F}( U) )$ is the class field of $U$
   modulo $\m$. \nomenclature[Fxx]{$\Fix$}{fixed field of an automorphism group\nomnorefpage}

   Clearly $\ker \Frob{\Fix( \Frob{E}{F}( U) )}{F} = U$ by Galois
   theory, so the kernels of the Artin maps are as desired. We are
   left to prove equality with the images of the norm maps. Because of
   Lemma~\ref{lemma::AKNorm}, because of the finiteness of $\Gal(E|F)$
   and of $\Cl_\m(F)/H$ respectively, and because of $\im \Norm{E}{F}
   = \ker \Frob{E}{F}$ by assumption, it is sufficient by a pigeonhole
   principle to show the following statement: If $E_1, E_2$ are
   intermediate fields of $E|F$ with $E_1 \supseteq E_2$ and $\im
   \Norm{E_1}{F} = \im \Norm{E_2}{F}$, then $E_1 = E_2$. So let $x \in
   \Cl_\m(E_2)$. Then there is $y \in \Cl_\m(E_1)$ with
   $\Norm{E_1}{F}(y) = \Norm{E_2}{F}(x)$. Let $z = \Norm{E_1}{E_2}(y)$
   and $u = x-z$. Then $\Norm{E_2}{F}(u) = 0$. We
   obtain \begin{align*} \Frob{E_1}{E_2}(x) & = \Frob{E_1}{E_2}(u+z) =
     \Frob{E_1}{E_2}(u) \circ \Frob{E_1}{E_2}(z) \\ & =
     \Frob{E_1}{F}(\Norm{E_2}{F}(u)) \circ \Frob{E_1}{E_2}(
     \Norm{E_1}{E_2}(y)) = \id.
    \end{align*} Thus $\ker \Frob{E_1}{E_2} = \Cl_\m(E_2)$ and
   Lemma~\ref{unique} implies $E_1 = E_2$
\end{proof}

Looking at abelian extensions of exponent $n$ and with modulus $\m$,
Lemma~\ref{thm:clfintermediate} suggests to concentrate on the maximal
case $H = n \Cl_\m(F)$. Using this we obtain further reduction
possibilities.

\begin{lemma} \label{thm::clfexistence}
   The field $F$ is a base field for class field theory of exponent
   $n$ coprime to $q$ if and only if for every effective divisor $\m$
   there is an abelian extension $E|F$ with modulus $\m$ and $$\ker
   \Frob{E}{F} = n \Cl_\m(F).$$
\end{lemma}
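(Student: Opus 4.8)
The plan is as follows. The \emph{only if} direction is immediate from the definitions: if $F$ is a base field for class field theory of exponent $n$, then for every effective divisor $\m$ the pair $(\m,n\Cl_\m(F))$ admits a class field $E$, which by Definition~\ref{def:classfield} is abelian over $F$ with modulus $\m$ and $\ker\Frob{E}{F}=n\Cl_\m(F)$. For the \emph{if} direction I would fix, for each effective divisor $\m$, an abelian extension $E_\m|F$ with modulus $\m$ and $\ker\Frob{E_\m}{F}=n\Cl_\m(F)$ as provided by the hypothesis; note that $\Gal(E_\m|F)\cong\Cl_\m(F)/n\Cl_\m(F)$ has exponent dividing $n$. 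The first goal is to upgrade $E_\m$ to a genuine class field over $F$ defined by $n\Cl_\m(F)$ modulo $\m$, that is, to add the norm condition $\im\Norm{E_\m}{F}=n\Cl_\m(F)$; the second goal is to deduce from this, using Lemma~\ref{thm:clfintermediate} together with the surjectivity of the Artin map (Theorem~\ref{thm::artinsurjective}), that $F$ is a base field for class field theory of exponent $n$.

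For the first goal, $\m$ is a modulus of $E_\m|F$ and $\ker\Frob{E_\m}{F}=n\Cl_\m(F)$ hold by hypothesis, and $\im\Norm{E_\m}{F}\subseteq\ker\Frob{E_\m}{F}=n\Cl_\m(F)$ by Lemma~\ref{lemma::AKNorm}, so only the reverse inclusion $n\Cl_\m(F)\subseteq\im\Norm{E_\m}{F}$ remains. This is the step at which one classically invokes the second inequality, and it is the point I would expect to need the most care; here, however, it can be settled directly. Given a class in $\Cl_\m(F)$, choose a representative $\DD\in\D_\m(F)$ and write $\DD=\sum_i n_i\p_i$. Each $\p_i$ lies outside $\supp(\m)$ and is therefore unramified in $E_\m|F$, so for a place $\q_i$ of $E_\m$ above $\p_i$ the residue degree $f_i=f(\q_i|\p_i)$ equals the order of $\Frob{E_\m}{F}(\p_i)$ in $\Gal(E_\m|F)$ and hence divides $n$. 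Since $\Norm{E_\m}{F}(\q_i)=f_i\p_i$ and $(n/f_i)\q_i\in\D_\m(E_\m)$, we get $\Norm{E_\m}{F}\bigl((n/f_i)\q_i\bigr)=n\p_i$, and summing gives $\Norm{E_\m}{F}\bigl(\sum_i n_i(n/f_i)\q_i\bigr)=n\DD$. Thus the $n$-th multiple of the chosen class lies in $\im\Norm{E_\m}{F}$, so $n\Cl_\m(F)\subseteq\im\Norm{E_\m}{F}$ and equality follows. Hence $E_\m$ is the class field over $F$ defined by $n\Cl_\m(F)$ modulo $\m$.

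For the second goal I would argue as follows. As $E_\m$ is now known to be a class field, Lemma~\ref{thm:clfintermediate} shows that for every subgroup $H$ with $n\Cl_\m(F)\subseteq H\subseteq\Cl_\m(F)$ the intermediate field $\Fix(\Frob{E_\m}{F}(H))$ of $E_\m|F$ is the class field of $(\m,H)$; this yields that the class field map is defined on all pairs of exponent $n$. For its surjectivity, let $E|F$ be abelian of exponent $n$. Then $E|F$ is unramified outside the sum $\m$ of its ramified places, and $\m$ is a modulus of $E|F$ by the reciprocity law (Theorem~\ref{thm::artinkernel}, applied after enlarging $E$ by a suitable constant field extension if necessary, which leaves the ramification unchanged). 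Writing $H=\ker\Frob{E}{F}\supseteq n\Cl_\m(F)$, we have $\ker\Frob{EE_\m}{F}=\ker\Frob{E}{F}\cap\ker\Frob{E_\m}{F}=n\Cl_\m(F)=\ker\Frob{E_\m}{F}$, so $EE_\m=E_\m$ by Lemma~\ref{unique}, and therefore $E$ is an intermediate field of $E_\m|F$. By what was shown above, $E$ is then the class field of $(\m,H)$, which proves surjectivity and completes the argument. The two points needing attention are the inclusion $n\Cl_\m(F)\subseteq\im\Norm{E_\m}{F}$ in the second paragraph — which, as explained, reduces to the elementary fact that the residue degrees of unramified places in an extension of exponent $n$ divide $n$ — and the bookkeeping guaranteeing that $\m$ is a modulus in the surjectivity step.
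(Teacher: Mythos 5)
Your proof is correct and follows the same overall plan as the paper: obtain $E_\m$ from the hypothesis, establish that it is a class field for $(\m, n\Cl_\m(F))$, invoke Lemma~\ref{thm:clfintermediate} to handle all $H \supseteq n\Cl_\m(F)$, and then show that every abelian $E|F$ of exponent $n$ is a subfield of $E_\m$ via Theorem~\ref{thm::artinkernel}, Corollary~\ref{cor::artinfunktor} and Lemma~\ref{unique}. The place where you improve on the paper's exposition is the norm containment $n\Cl_\m(F) \subseteq \im\Norm{E_\m}{F}$. The paper attributes this to Lemma~\ref{lemma::AKNorm}, but as stated that lemma only yields $\im\Norm{E_\m}{F} \supseteq [E_\m:F]\cdot\Cl_\m(F)$, and since $[E_\m:F] = \#\bigl(\Cl_\m(F)/n\Cl_\m(F)\bigr)$ is in general a proper multiple of $n$, the group $[E_\m:F]\Cl_\m(F)$ may be strictly smaller than $n\Cl_\m(F)$, so the cited inclusion does not deliver what is needed. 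Your direct argument — that for $\p \notin \supp(\m)$ the residue degree $f(\q|\p)$ equals the order of the Frobenius and hence divides the exponent $n$, so that $n\p = \Norm{E_\m}{F}\bigl((n/f)\q\bigr)$ lies in $\im\Norm{E_\m}{F}$ — is exactly the right way to close this step and is a welcome sharpening of the written proof. The remaining points you flag (using a constant field extension to place $E$ inside the hypotheses of Theorem~\ref{thm::artinkernel}, and the kernel-intersection computation for $EE_\m$) are handled correctly and align with the paper's argument.
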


\begin{proof}
  If $F$ is such a base field then the assertion follows directly from
  the definitions. Conversely, let $\m$ be an effective divisor.  By
  assumption there is an abelian extension $E|F$ of exponent $n$ with modulus
  $\m$ and $\ker \Frob{E}{F} = n \Cl_\m(F)$.  Then $$\ker \Frob{E}{F}
  \supseteq \im \Norm{E}{F} \supseteq n \Cl_\m(F) = \ker \Frob{E}{F}$$
  by Lemma~\ref{lemma::AKNorm}, thus $\ker \Frob{E}{F} = \im
  \Norm{E}{F}$ and $E$ is the class field over $F$ defined by $n
  \Cl_\m(F)$ modulo $\m$.  By Lemma~\ref{thm:clfintermediate} the
  class field of $H$ modulo $\m$ exists for all overgroups $H
  \supseteq n \Cl_\m(F)$.

  Let now $L|F$ be abelian of exponent $n$ coprime to $q$.  Then $L|F$
  has a modulus $\m$ by Theorem~\ref{thm::artinkernel} and $\ker
  \Frob{L}{F} \supseteq n \Cl_\m(F)$. We have already shown that the
  class field $E$ over $F$ corresponding to $n \Cl_\m(F)$ modulo $\m$
  exists, so $E$ is the maximal abelian extension of $F$ with modulus
  $\m$ of exponent $n$. We obtain $L \subseteq E$, and $L$ is the class
  field for some $H$ modulo $\m$ with $H \supseteq n \Cl_\m(F)$ by
  Lemma~\ref{thm:clfintermediate}.
\end{proof}

\begin{lemma} \label{thm::clftransitive}
 Suppose $\FD|F$ is a constant field extension and $F'$ is a
 base field for class field theory of exponent $n$ coprime to $q$. 
 If $\FD$ is a class field over $F$ or if $[ \FD : F ]$ is coprime to $n$,
 then $F$ is a base field for class field theory of exponent~$n$.
\end{lemma}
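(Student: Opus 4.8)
The plan is to use Lemma~\ref{thm::clfexistence} as the criterion: it suffices to show that for every effective divisor $\m$ of $F$ there exists an abelian extension $E|F$ with modulus $\m$ and $\ker\Frob{E}{F} = n\Cl_\m(F)$. The strategy is to pass up to $F'$, where such maximal extensions are known to exist by hypothesis, and then descend. First I would fix $\m$ and set $\m' = \Con{F'}{F}(\m)$. Since $F'$ is a base field for class field theory of exponent $n$, there is a class field $E'|F'$ with modulus $\m'$ and $\ker\Frob{E'}{F'} = n\Cl_{\m'}(F')$; we may take $E'$ to be the maximal such extension, so that $E'|F'$ is Galois (indeed abelian) and, because $F'|F$ is Galois, $E'|F$ is Galois as well (the maximality of $E'$ makes it stable under $\Gal(\bar F|F)$, being characterised intrinsically over $F$ by ramification and exponent conditions that are $\Gal(\bar F|F)$-invariant once we note $F'|F$ is normal). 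The abelian extension we want will be extracted from $E'$ as the fixed field of a suitable subgroup of $\Gal(E'|F)$.

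Next I would analyse $\Gal(E'|F)$. We have the exact sequence $1 \to \Gal(E'|F') \to \Gal(E'|F) \to \Gal(F'|F) \to 1$, with $\Gal(F'|F)$ cyclic (a constant field extension) generated by $\phiq$. The subtlety is that $E'|F$ need not be abelian in general; this is exactly where the two hypotheses enter. In the case that $[\FD:F]$ is coprime to $n$: $\Gal(E'|F')$ has exponent dividing $n$, $\Gal(F'|F)$ has order $[\FD:F]$ coprime to $n$, so by the Schur--Zassenhaus theorem the sequence splits and, moreover, one can find an $F$-subextension $E \subseteq E'$ with $E \cap F' = F$ and $EF' = E'$, namely the fixed field of a complement to $\Gal(E'|F')$ — the commutator subgroup of $\Gal(E'|F)$ lands inside $\Gal(E'|F')$ and has order coprime to... no, more carefully: one shows $E|F$ abelian of exponent dividing $n$ by taking $E$ to be the fixed field of (a lift of) $\phiq$; the coprimality forces $E|F$ to be abelian because conjugation by $\phiq$ acts on the $n$-torsion group $\Gal(E'|F')$ and the fixed-plus-coinvariants decomposition is clean. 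In the case that $\FD$ is a class field over $F$: then $\FD|F$ is abelian with some modulus, hence $\Gal(E'|F)$ sits in an extension of abelian by abelian, and using Corollary~\ref{cor::artinfunktor} together with Theorem~\ref{thm::kummer2} applied over $F'$ one can identify $E'$ with (or inside) a compositum of $\FD$ with an extension already defined over $F$; concretely, $E = F((F_{n,\m})^{1/n})$ computed over $F$ should already have the property that $EF' = E'$, and $E|F$ is abelian by Kummer-type reasoning since $\mu_n$ may not be in $F$ but Lemma~\ref{ext} and Theorem~\ref{thm::artinkernel} give a modulus and the Artin map description. In either case, the outcome is an abelian extension $E|F$ with $EF' = E'$ and a modulus; one takes $E$ with modulus $\m$ after possibly enlarging by Corollary~\ref{cor::artinfunktor}.

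Having produced such an $E|F$, I would finally compute $\ker\Frob{E}{F}$. By Theorem~\ref{thm::artinfunktor}$(i)$ applied to the tower $F \subseteq F' \subseteq E' $ and to $F \subseteq E$, together with the compatibility $\Frob{E'}{F'}\circ\Con{F'}{F} $ and the fact that $\Norm{F'}{F}\circ\Con{F'}{F}$ is multiplication by $[\FD:F]$ on $\Cl_\m(F)$, one relates $\Frob{E}{F}$ to $\Frob{E'}{F'}$. Since $\ker\Frob{E'}{F'} = n\Cl_{\m'}(F')$ and $E'=EF'$, Galois theory gives $\Gal(E|F) \cong \Gal(E'|F')/(\Gal(E'|F')\cap\ldots)$ or $\Gal(E|F)$ is a quotient/subgroup matching $\Gal(E'|F')$ appropriately, and pulling this back through the Artin map (surjective by Theorem~\ref{thm::artinsurjective}) yields $\ker\Frob{E}{F} = n\Cl_\m(F)$; the reverse inclusion $\ker\Frob{E}{F} \supseteq n\Cl_\m(F)$ is automatic since $\Gal(E|F)$ has exponent $n$. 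Then Lemma~\ref{thm::clfexistence} finishes the proof.

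The main obstacle is the middle step: showing that one genuinely obtains an \emph{abelian} extension $E|F$ defined over the small field with $EF' = E'$ and with the right Artin kernel, and doing this uniformly enough that the two hypotheses ($\FD$ a class field, or $[\FD:F]$ coprime to $n$) each suffice. The coprimality case is a clean Schur--Zassenhaus / averaging argument; the class field case requires knowing that $\FD|F$ being abelian lets $E'$ descend, which should follow from the explicit Kummer-theoretic description $E=F((F_{n,\m})^{1/n})$ combined with Theorem~\ref{thm::artinkernel}, but verifying that this $E$ satisfies $EF'=E'$ rather than being strictly smaller needs a dimension/index count using Lemma~\ref{lemma::kummer2}$(iii)$ over both $F$ and $F'$.
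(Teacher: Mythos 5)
Your overall shape — pass up to $F'$ via Lemma~\ref{thm::clfexistence}, use the base-field property there, then descend — matches the paper, but the descent step as you describe it would fail, and the idea that makes the paper's argument work is missing from your plan.

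You define $E'$ as the class field of $n\Cl_{\m'}(F')$ modulo $\m' = \Con{F'}{F}(\m)$, i.e.\ the \emph{maximal} abelian extension of $F'$ of exponent $n$ unramified outside $\m'$. Maximality does give $E'|F$ Galois, but it does not make $E'|F$ abelian, and in general it isn't: $\Gal(F'|F)$ acts on $\Cl_\m(F')/n\Cl_\m(F') \cong \Gal(E'|F')$ (via Theorem~\ref{thm::artinfunktor}$(ii)$), and that action is usually nontrivial, so $\Gal(E'|F)$ is a genuine semidirect product. Consequently there need be no abelian $E|F$ with $EF'=E'$: in your coprime case, Schur--Zassenhaus gives a complement $H$, but the fixed field $\Fix(H)$ is Galois over $F$ only when $H$ is normal, i.e.\ only when $E'|F$ is already abelian, which is exactly what is not available. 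Your hand-wave about ``fixed-plus-coinvariants'' is the place where the argument breaks. In the class-field case the gap is larger still: $F((F_{n,\m})^{1/n})$ is not Kummer over $F$ when $\mu_n\not\subseteq F$ and has no reason to be abelian or even normal over $F$.

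The paper instead takes $E'$ to be the class field over $F'$ of the \emph{larger} subgroup $n\Cl_\m(F') + \ker\Norm{F'}{F}$ modulo $\Con{F'}{F}(\m)$, hence a smaller field than yours. Adjoining $\ker\Norm{F'}{F}$ to the kernel is the crucial move: it makes $\im\Norm{E'}{F'}$ stable under every $F$-automorphism of $F'$ (so $E'|F$ is Galois via Theorem~\ref{thm::artinfunktor}$(ii)$ and Lemma~\ref{unique}), and more importantly it forces the conjugation action of a lift $\sigma$ of $\phiq$ on $\Gal(E'|F')$ to be trivial, since for $\tau = \Frob{E'}{F'}(x)$ one has $\sigma\tau\sigma^{-1} = \Frob{E'}{F'}(\sigma x)$ and $\sigma x - x \in \ker\Norm{F'}{F} \subseteq \ker\Frob{E'}{F'}$. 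So $E'|F$ itself is abelian with modulus $\m$, no descent to a subfield $E$ with $EF'=E'$ is needed; one only verifies $\ker\Frob{E'}{F} \subseteq n\Cl_\m(F)$ (a short diagram chase, done separately in the two hypotheses using $\Norm{F'}{F}$ resp.\ $\Con{F'}{F}$ and the coprimality $ed \equiv 1 \bmod n$), and then replaces $E'$ by the fixed field of $\Frob{E'}{F}(n\Cl_\m(F))$. Without adding $\ker\Norm{F'}{F}$ to the defining subgroup, neither the abelianness of $E'|F$ nor the Artin kernel computation goes through, so this is a genuine missing idea rather than a detail.
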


\begin{proof}
  Let $\m$ be an arbitrary effective divisor of $F$, and let $\ED$ be the
  class field of $\FD$ defined by $n \Cl_\m(F') + \ker \Norm{F'}{F}$
  modulo $\Con{F'}{F}(\m)$, where $\Norm{F'}{F} : \Cl_\m(F')
  \rightarrow \Cl_\m(F)$.

  We first show that $\ED|F$ is abelian with modulus $\m$.  We apply
  Theorem~\ref{thm::artinfunktor}, $(ii)$. So let $\sigma$ be an
  $F$-monomorphism $\sigma : \ED \rightarrow \bar{F}$. Then
  $\sigma(\FD) = \FD$ since $\FD|F$ is a constant field extension by assumption, and $\sigma$ extends an element of
  $\Gal(\FD|F)$. Since $\im \Norm{\ED}{\FD} = n \Cl_\m(F') + \ker
  \Norm{F'}{F}$ and \begin{align*} \sigma( \im \Norm{\ED}{\FD} ) & = n
    \sigma(\Cl_\m(F')) + \sigma(\ker \Norm{F'}{F}) \\ & = n \Cl_\m(F')
    + \ker \Norm{F'}{F} = \im \Norm{\ED}{\FD}, \end{align*}
  $\sigma(\ED)$ is the class field over $\FD$ defined by $\im
  \Norm{\sigma(\ED)}{\FD} = \sigma( \im \Norm{\ED}{\FD} ) = \im
  \Norm{\ED}{\FD}$.  It follows that $\sigma( \ED ) = \ED$ and $\ED |
  F$ is Galois.  Now let $\sigma \in \Gal(\ED|F)$ be an extension of a
  generator of the cyclic group $\Gal( \FD | F)$. The elements of
  $\Gal(\ED|F)$ are of the form $\tau \circ \sigma^i$ for $\tau \in
  \Gal( \ED|\FD )$ and $i \in \Z$. Since $\sigma^i$ and $\sigma^j$
  commute, it remains to be shown that $\sigma$ commutes with any
  $\tau$. Because $\Frob{\ED}{\FD}$ is surjective, there is an $x \in
  \Cl_\m(\FD)$ such that $\tau = \Frob{\ED}{\FD}(x)$.  We then have
  $\sigma(x) - x \in \ker \Norm{F'}{F} \subseteq \im \Norm{E'}{F'}$
  and $$ \sigma \circ \tau \circ \sigma^{-1} = \Frob{\ED}{\FD} (
  \sigma(x) ) = \Frob{\ED}{\FD} ( x ) \Frob{\ED}{\FD} ( \sigma(x) - x
  ) = \Frob{\ED}{\FD} ( x) = \tau. $$ We have thus proven that $\ED|F$
  is abelian. Furthermore, it is clear that $E'|F$ is only ramified in
  $\m$ since this is the case for $E'|F'$ and $F'|F$ is unramified.
  By Theorem~\ref{thm::artinkernel} we have that $\m$ is a modulus of
  $E'|F$.

  We now regard $\Frob{E'}{F}$ as a map defined on $\Cl_\m(F)$. We
  show that $\ker \Frob{\ED}{F} \subseteq n \Cl_\m(F)$. Then $E'' =
  \Fix( \Frob{E'}{F}( n \Cl_\m(F) ))$ satisfies $\ker \Frob{E''}{F} =
  n \Cl_\m(F)$. Lemma~\ref{thm::clfexistence} then implies that $F$
  is a base field of class field theory of exponent~$n$.

  Assume $F'$ is a class field of $F$.  Let $x \in \ker \Frob{E'}{F}$.
  Then $x \in \ker \Frob{F'}{F}$, and by assumption there is $y \in
  \Cl_\m(F')$ with $x = \Norm{F'}{F}(y)$.  Now $\Frob{E'}{F'}(y) =
  \Frob{E'}{F}(x) = 0$, so there is $z \in \Cl_\m(E')$ with $y =
  \Norm{E'}{F'}(z) \in n \Cl_\m(F') + \ker \Norm{F'}{F}$. We obtain $x
  = \Norm{E'}{F}(z)$ and $x = \Norm{F'}{F}(y) \in n \Cl_\m(F)$. Thus
   indeed $\ker \Frob{E'}{F} \subseteq n \Cl_\m(F)$.

  Finally, let $d = [F':F]$ and assume that $d$ and $n$ are coprime.
  Let $x \in \ker \Frob{E'}{F}$ and $y = \Con{F'}{F}(x)$. Then
  $\Frob{E'}{F'}(y) = \Frob{E'}{F}(\Norm{F'}{F}(y)) = \Frob{E'}{F}(dx)
  = 0$.  So there are $z \in \Cl_\m(F')$ and $t \in \ker \Norm{F'}{F}$
  such that $y = nz + t$. Then $$dx = \Norm{F'}{F}(y) = n
  \Norm{F'}{F}(z) + \Norm{F'}{F}(t) = n \Norm{F'}{F}(z).$$ Write $ed =
  1 + \lambda n$, which is possible since $d$ and $n$ are coprime by
  assumption. Then $edx = x + n (\lambda x) \in n \Cl_\m(F)$ and thus
  $x \in n \Cl_\m(F)$. Hence also in this case $\ker \Frob{E'}{F}
  \subseteq n \Cl_\m(F)$.
\end{proof}

\begin{thm} \label{thm::classfieldtheory}
  Every $F$ is a base field for class field theory coprime to $q$.
\end{thm}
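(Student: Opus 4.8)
The plan is to reduce the general statement to the case already handled in Theorem~\ref{thm::kummer2} via a sequence of standard devissage steps, the key point being that everything has been set up so that the second inequality is never needed. By Lemma~\ref{thm::clfexistence} it suffices to show that for every effective divisor $\m$ there is an abelian extension $E|F$ with modulus $\m$ and $\ker \Frob{E}{F} = n\Cl_\m(F)$, and it further suffices to do this for each prime power $n = \ell^k$ separately: if $E_\ell$ is the maximal abelian extension of exponent $\ell^{k_\ell}$ with modulus $\m$ and $\ker \Frob{E_\ell}{F} = \ell^{k_\ell}\Cl_\m(F)$, then by Corollary~\ref{cor::artinfunktor} their compositum $E$ has modulus $\m$, and by the surjectivity of the Artin map (Theorem~\ref{thm::artinsurjective}) together with the Galois-theoretic description of the compositum one checks $\ker \Frob{E}{F} = \bigcap_\ell \ell^{k_\ell}\Cl_\m(F) = n\Cl_\m(F)$. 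So fix $n = \ell^k$.

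First I would dispose of the case $\ell \neq \operatorname{char} F$ by climbing to a field containing enough roots of unity. Let $r$ be the multiplicative order of $q$ modulo $n$, so $\mu_n \subseteq \F_{q^r}$, and set $F' = F\F_{q^r}$. Since $\gcd([F':F],\ell) = \gcd(r, \ell^k)$, which is $1$ whenever $\ell \nmid r$, and in the remaining subcase $\ell \mid r$ one first passes to the subextension where $[F':F]$ is prime to $\ell$ and handles the $\ell$-part of the constant extension as a cyclotomic class field — more precisely, a constant field extension is always a class field by Lemma~\ref{frobconstext} and the first inequality. Over $F'$ we have $\mu_n \subseteq F'$, so Theorem~\ref{thm::kummer2} applies: the maximal abelian extension of $F'$ of exponent $n$ unramified outside $\Con{F'}{F}(\m)$ has Artin kernel exactly $n\Cl_{\Con{F'}{F}(\m)}(F')$, hence $F'$ is a base field for class field theory of exponent $n$ by Lemma~\ref{thm::clfexistence}. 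Now Lemma~\ref{thm::clftransitive} descends this to $F$: either $[F':F]$ is coprime to $n$, or $F'$ is a class field over $F$ (being a constant field extension), so in both cases $F$ is a base field for class field theory of exponent $n$.

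This establishes the $\ell \neq \operatorname{char} F$ part, and since the paper explicitly restricts to abelian extensions of degree coprime to the characteristic, that is all that is required; the characteristic-$\ell$ case is set aside by hypothesis. Assembling the pieces over all primes $\ell \mid n$ via the compositum argument above, and then noting that an arbitrary abelian extension of degree coprime to $q$ has exponent $n$ for some such $n$ and by Theorem~\ref{thm::artinkernel} admits some modulus $\m$, we conclude by Lemma~\ref{thm::clfexistence} and Lemma~\ref{thm:clfintermediate} that $F$ is a base field for class field theory coprime to $q$.

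The main obstacle I expect is the bookkeeping in the reduction to prime power exponent and, within that, the subcase $\ell \mid r$ where the constant-field extension $F'|F$ has degree divisible by $\ell$: one must carefully separate the prime-to-$\ell$ part of $r$ (handled by the coprimality clause of Lemma~\ref{thm::clftransitive}) from the $\ell$-part (handled by the class-field clause, using that constant field extensions are class fields). Verifying that the compositum of the $E_\ell$ really has the intersection of the kernels as its own kernel — which uses that $\Frob{E}{F}$ restricts to $\Frob{E_\ell}{F}$ on each factor and that $\Cl_\m(F)/n\Cl_\m(F) \to \prod_\ell \Cl_\m(F)/\ell^{k_\ell}\Cl_\m(F)$ is an isomorphism by the Chinese remainder theorem — is routine but needs to be stated with care, as does the final check that modulus and ramification behave correctly under these operations, which is exactly what Corollary~\ref{cor::artinfunktor} is for.
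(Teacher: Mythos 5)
The reduction to Lemma~\ref{thm::clfexistence}, the reduction to prime power exponent via compositum and CRT, and the passage to $F' = F(\mu_n)$ where Theorem~\ref{thm::kummer2} applies are all sound. The problem lies in the descent from $F'$ to $F$ in the subcase $\ell \mid r$. You split $F'|F$ into a prime-to-$\ell$ step $L|F$ (handled correctly by the coprimality clause of Lemma~\ref{thm::clftransitive}) and an $\ell$-power step $F'|L$, and you assert that $F'$ is automatically a class field over $L$ ``by Lemma~\ref{frobconstext} and the first inequality.'' That assertion is unjustified, and the inequality points the wrong way: surjectivity of the Artin map gives $(\Cl_\m(L) : \ker \Frob{F'}{L}) = [F':L]$ and Lemma~\ref{lemma::AKNorm} gives $\im \Norm{F'}{L} \subseteq \ker \Frob{F'}{L}$, so the first inequality $(\Cl_\m(L) : \im \Norm{F'}{L}) \geq [F':L]$ recovers only the inclusion you already have. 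To conclude $\im \Norm{F'}{L} = \ker \Frob{F'}{L}$ one needs the opposite bound — precisely the second inequality that the whole architecture of the paper is designed to circumvent. For constant field extensions, the direct route to this would be (in effect) Lang's theorem on the Jacobian, which is not in the paper's toolkit. So as written, the proposal has a genuine gap at this step.

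The paper closes exactly this gap by making the proof an induction on $n$. It sets $F' = F(\mu_n)$ and chooses the intermediate field $L$ so that $[F':L]$ is a power of $p = \operatorname{char} F$ (hence automatically coprime to $n$, so the coprimality clause handles $F'|L$ for free) and $[L:F]$ is coprime to $q$; since $[L:F] \leq [F':F] \leq \lambda(n) < n$, the inductive hypothesis applied with exponent $[L:F]$ shows that $L$ is a class field over $F$, and the class-field clause of Lemma~\ref{thm::clftransitive} finishes. Your approach could be salvaged along the same lines: set up an induction on $n$, note that the $\ell$-part of $r$ is strictly less than $\ell^k = n$, and invoke the inductive hypothesis over $L$ to see that $F'$ is a class field over $L$. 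But without the induction the argument does not go through, and the phrase ``by Lemma~\ref{frobconstext} and the first inequality'' should be removed, as it asserts something those tools do not prove.
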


\begin{proof}
  It is enough to show that $F$ is a base field for class field theory
  of exponent $n$ for every $n$ coprime to $q$. The proof is by
  induction on $n$. The case $n = 1$ is trivially clear. Now let $n
  \geq 2$.

  Define $\FD = F(\mu_n)$.  Then $\FD|F$ is a constant field extension
  of degree less than $n$, and $\FD$ is a base field for class field
  theory of exponent $n$ by Theorem~\ref{thm::kummer2} and
  Lemma~\ref{thm::clfexistence}.  Furthermore there is an intermediate
  field $F \subseteq L \subseteq F'$ such that $[L:F]$ is coprime to
  $q$ and $[F':L]$ is a power of the characteristic.  Two applications
  of Lemma~\ref{thm::clftransitive} show that $F$ is a base field for
  class field theory of exponent $n$: First, since $[F':L]$ is coprime
  to $n$, $L$ is a base field for class field theory of exponent $n$
  by Lemma~\ref{thm::clftransitive}.  Second, by the induction
  hypothesis, $L$ is a class field over~$F$, so~$F$ is a base field
  for class field theory of exponent $n$ by
  Lemma~\ref{thm::clftransitive}.
\end{proof}


\printnomenclature[2.1cm] \nomenclature[Hom]{$\Hom$}{homomorphism
  group\nomnorefpage} \nomenclature[Gal]{$\Gal$}{Galois
  group\nomnorefpage}

{
\small
\bibliographystyle{main}{alpha}
\bibliography{main}{literatur}{References}
}

\appendix

\section{Conductors}

Let $\m, \n$ be two divisors of $F$. We write $\gcd(\m, \n) =
\sum_{\p} \min( v_\p(\m), v_\p(\n)) \p$ and $\m \leq \n$ if and only
if $v_\p(\m) \leq v_\p(\n)$ for all places $\p$ of $F$.  An
application of the strong approximation theorem shows $$\H_{\gcd(\m,
  \n)}(F) = \H_{\m}(F) \H_{\n}(F).$$ Thus if $E|F$ is an abelian
extension, and $\m$ as well as $\n$ are moduli of $E|F$, then $\gcd(
\m, \n)$ is also a modulus for $E|F$. There is thus a smallest modulus
of $E|F$ with respect to $\leq$, the {\bf conductor}~$\f(E|F)$ of $E|F$.

Let $E|F$ be an abelian extension of degree coprime to
$q$. Theorem~\ref{thm::artinkernel} shows that $$\f(E|F) = \sum_{\p
  \text{ ramified in $E|F$} } \p.$$ Conductors with higher
multiplicites are possible, but of course only for abelian extensions
whose degree is not coprime to $q$.

%
%
%

\section{Relation to Pairings in Geometry and Cryptography}\label{app::pairings}

The Tate pairing was first considered in \cite{crypto}{tate-58} for abelian
varieties over local fields. Lichtenbaum \cite{crypto}{lichtenbaum-69} gave a specific
description for Jacobians of curves over local fields in terms of
a function evaluation on the associated curve. Frey and R\"uck \cite{crypto}{FR} used
reduction modulo $p$ to obtain a non-degenerate pairing for curves
over finite fields.  The resulting pairing is defined in terms of
function fields as follows. Suppose $q \equiv 1 \bmod n$ and consider
\begin{align} \label{def::tatepairing} 
t_{n} & : \Cl^0(F)[n] \times \Cl^0(F)/n \Cl^0(F) \rightarrow
(\GF_q^\times) / ( \GF_q^\times)^n. \end{align} Let $x \in \Cl^0(F)[n]$
and $y \in \Cl^0(F)/n \Cl^0(F)$. There are $\DD, \EE \in \D(F)$ of
degree zero such that $x = \EE + \HF$, $y = (\DD + \HF) + n \Cl^0(F)$
and $\DD$, $\EE$ are coprime. Furthermore, there is $f \in F^\times$
with $\ddiv_F(f) = n \EE$. Then $$t_n(x, y) = f(\DD) \cdot
(\GF_q^\times)^n$$ and $t_n$ is a well-defined, non-degenerate
pairing.

We can put $t_n$ in relation with $t_{n, \m}$ and thus provide an
interpretation of $t_n$ in terms of class field theory as follows.
Let $\m = 0$ and restrict $t_{n, 0}$ to the non-degenerate pairing
$$\overline{t}_{n, 0} : F_{n, 0}/ ( \GF_q^\times \cdot (F^\times)^n )
\times \Cl^0(F)/n \Cl^0(F) \rightarrow \mu_n, \quad ( f \cdot
(\GF_q^\times \cdot (F^\times)^n), y) \,\mapsto\, t_{n,0}( f \cdot
(F^\times)^n, y ).$$ Here $f \cdot (F^\times)^n$ is only defined up to
multiples from $\GF_q^\times$. But $t_{n, 0}(f \cdot (F^\times)^n, y)$
is independent of this by \eqref{eq::definitiontatepairing} since $y$
has degree zero. Now define
\begin{align*}
      \psi : \Cl^0(F)[n] \rightarrow F_{n, 0}/ ( \GF_q^\times \cdot
      (F^\times)^n ), \quad & \DD + \HF \mapsto f \cdot (\GF_q^\times
      \cdot (F^\times)^n) \text{ with } \ddiv_F(f) = n\DD,
      \\ \chi: \GF_q^\times / (\GF_q^\times)^n \rightarrow \mu_n, \quad & z
      \cdot ( \GF_q^\times)^n \mapsto z^{(q-1)/n}.
\end{align*}

Taking \eqref{eq::ex1} into consideration, these maps are easily seen
to be well-defined isomorphisms. Putting things together readily yields
  $$t_n(x, y) = \chi^{-1}( \,\overline{t}_{n, 0}( \psi(x), y) \,).$$
We conclude that the Tate--Lichtenbaum pairing $t_n$ is essentially
equal to our pairing $t_{n, \m}$ for the special case $\m = 0$, and
this gives an alternative approach to proving that $t_n$ is a
non-degenerate pairing. In terms of class field theory and somewhat
vaguely speaking, the Tate--Lichtenbaum pairing thus provides
information about the Artin map of the maximal unramified abelian
extension $E|F$ of exponent $n$, under the condition that sufficiently
many roots of unity are contained in the base field $F$. A parallel
interpretation can be given for the Weil pairing, see~\cite{crypto}{Howe}.

Cryptography is built upon one-way functions $f : S \rightarrow T$.
Suppose $S$ and $T$ are finite sets whose elements can be represented
efficiently on a computer and $f(s)$ can be computed efficiently when
given $s \in S$. The one-wayness of $f$ then means that the
computation of preimages of $f(s)$ under $f$ for randomly chosen $s
\in S$ is not feasible, up to current knowledge and
technology. Additional assumptions may be imposed on $S$, $T$ and $f$.
A case widely used since 1975 is of the form $S = \Z/n\Z$, $T =
\GF_{q}^\times$ and $f(x + n\Z) = \zeta^x$ for $n$ prime, $q$ a prime
power and $\zeta$ a primitive $n$-th root of unity. It is believed
that $f$ is a one-way isomorphism, if $n$ and $q$ are suitably
chosen. It is also believed that the Tate--Lichtenbaum
\eqref{def::tatepairing} and Weil pairings define one-way isomorphisms
of each argument for a suitable choice of parameters. This richer
structure includes other computationally hard problems and has led to
striking new results in cryptography since 2000.  Apart from security
considerations, it is of interest to compute these pairings, or
modifications thereof, most efficiently. This is where the Ate pairing
\cite{crypto}{GHOTV,hess-smart-vercauteren-06} and its variants
\cite{crypto}{hess-08,vercauteren-2010} come into play. The main point
here is to reduce the degree of $f \in F^\times$, which is used to
define the pairing value of the form $f(\DD)^{(q-1)/n}$. This is
achieved by restricting the domain of the Tate--Lichtenbaum pairing to
certain eigenspaces of a Frobenius endomorphism, which allows for the
definition of yet another pairing.  We give a sketch of the relevant
definitions and statements.

Let $F'|F$ denote a constant field extension such that $\mu_n
\subseteq F'$. Now $q \equiv 1 \bmod n$ is usually not satisfied.  Let
$E|F$ be the maximal unramified extension of exponent $n$ and $E' = E
F'$. Then $E$ is the class field over $F$ defined by $n \Cl(F)$, and
$E'|F$ is abelian. We define 
 \begin{align*}
\Cl^0(F')[n, q-\phi] & = \{ x \in \Cl^0(F')[n] \,|\, \phi(x) = qx \},
\\ \Delta / (F'^\times)^n & = \{ x \in F'_{n,0}/ (F'^\times)^n \,|\,
\phi(x) = x^q \}, \end{align*} where $\Delta$ is supposed to be a
 subgroup of $F'_{n,0}$ containing $(F'^\times)^n$.

Using the $\phi$-equivariance of \eqref{tnm} it can be shown that $E'
= F'( \Delta^{1/n} ).$ In a similar fashion as for the
Tate-Lichtenbaum pairing above we finally define $$a_{n} :
\Cl^0(F')[n,q-\phi] \times \Cl^0(F) / n \Cl^0(F) \rightarrow \mu_n$$
as follows. For $x \in \Cl^0(F')[n,q-\phi]$ and $y \in \Cl^0(F) / n
\Cl^0(F)$ there are $\DD \in \D(F)$ and $\EE \in \D(F')$ of degree
zero such that $x = \EE + \H(F')$, $y = (\DD + \HF) + n \Cl^0(F)$ and
$\Con{F'}{F}(\DD)$, $\EE$ are coprime. By Lemma~\ref{ext} there is $h
\in F'^\times$ with $\ddiv_{F'}(h) = q \EE - \phi(\EE)$ and coprime to
$\Con{F'}{F}(\DD)$. Then let $$a_n(x, y) = h(\Con{F'}{F}(\DD)).$$
Lemma~\ref{ext} and Theorem~\ref{thm::artinsurjective} show that $a_n$
is a non-degenerate pairing. Note that $\phi$ acts by multiplication
by $q$ on the left argument and as identity on the right argument of
$a_n$.

In terms of class field theory and again somewhat vaguely speaking,
this pairing provides information about the Artin map of $E'|F$
without the condition that sufficiently many roots of unity are
contained in the base field $F$, whereas the Tate--Lichtenbaum pairing
provides information about the Artin map of $E'|F'$.

The pairing $a_n$ occurs as Ate pairing on hyperelliptic curves
\cite{crypto}{GHOTV}. For elliptic curves, one considers suitable
products of $t_n$ and ${a_n\!}^{(q^k-1)/n}$ composed with powers of
$\phi$, where $k = [ F' : F ]$. One of the resulting pairings is
called Ate pairing, systematic discussions can be found in
\cite{crypto}{hess-08,vercauteren-2010}.  The description of $a_n$
given here provides the main ingredient for a further study of Ate
pairings in the general curve and composite exponent $n$ case along the
lines of~\cite{crypto}{GHOTV, hess-08,vercauteren-2010}.

%
%

{
\small
\bibliographystyle{crypto}{plain}
\bibliography{crypto}{literatur}{References for Appendix}
}

\end{document}